\newcommand{\cev}[1]{\reflectbox{\ensuremath{\vec{\reflectbox{\ensuremath{#1}}}}}}
\numberwithin{equation}{section}
\theoremstyle{plain}
\newtheorem{theorem}{Theorem}[section]
\newtheorem{corollary}[theorem]{Corollary}
\newtheorem{lemma}[theorem]{Lemma}
\newtheorem{prop}[theorem]{Proposition}
\theoremstyle{remark}
\newtheorem{exa}[theorem]{Example}
\theoremstyle{definition}
\newtheorem{dfn}[theorem]{Definition}
\newtheorem{case}{Case}
\DeclareMathOperator{\U}{U}
\DeclareMathOperator{\Star}{Star}
\DeclareMathOperator{\Deg}{deg}
\author{Shiv Parsad}
\address{
Indian Institute of Science Education and Research (IISER) Bhopal\\
Bypass Road, Bhauri\\
Bhopal 462 066\\
Madhya Pradesh, India} 
\email{parsad.shiv@gmail.com}
\author{Bidyut Sanki}
\address{
Institute of Mathematical Sciences\\ % \hfill (Received 00 00 2010)\\
CIT Campus, Tharamani  \\ %\hfill (Revised  00 00 2010)\\
Chennai, 600113\\
India}
\email{bidyut.iitk7@gmail.com}
\begin{document}
\title{Filling systems on surfaces}

\subjclass[2000]{Primary 57M15; Secondary 05C10}

\keywords{Surface, filling system, fat graph}

\maketitle

%%%%%%%%%%%%%%%%%%% ABSTRACT %%%%%%%%%%%%%%%%%%%%%%%%
\begin{abstract}

Let $F_g$ be a closed orientable surface of genus $g$. A set $\Omega = \{ \gamma_1, \dots, \gamma_s\}$ of pairwise non-homotopic simple closed curves on $F_g$ is called a \emph{filling system} or simply a \emph{filling} of $F_g$, if $F_g\setminus \Omega$ is a union of $b$ topological discs for some $b\geq 1$. A filling system is called \emph{minimal}, if $b=1$. The \emph{size} of a filling is defined as the number of its elements. We prove that the maximum size of a filling of $F_g$ with $b$ complementary discs is $2g+b-1$. Next, we show that for $g\geq 2, b\geq 1\text{ with }(g,b)\neq (2,1)$ (resp. $(g,b)=(2,1)$) and for each $2\leq s\leq 2g+b-1$ (resp.  $3\leq s\leq 2g+b-1$), there exists a filling of $F_g$ of size $s$ with $b$ complementary discs. 

Furthermore, we study geometric intersection number of curves in a minimal filling. For $g\geq 2$, we show that for a minimal filling $\Omega$ of size $s$, the \emph{geometric intersection numbers} satisfy $\max \left\lbrace
i(\gamma_i, \gamma_j)| i\neq j\right\rbrace\leq 2g-s+1$, and for each such $s$ there exists a minimal filling $\Omega=\left\lbrace \gamma_1, \dots, \gamma_s \right\rbrace$ such that $\max\left\lbrace i(\gamma_i, \gamma_j) | i\neq j\right\rbrace = 2g-s+1$.
\end{abstract}
\maketitle

%%%%%%%%%%%%%%%%%%% Section 1 (Introduction) %%%%%%%%

\section{Introduction}
In this article, we study filling systems on oriented closed surfaces. A set $\Omega$ of pairwise non-homotopic simple closed curves on a closed oriented surface $F_g$ is called a filling system or simply a filling of $F_g$, if the complement $F_g\setminus \Omega$ is a disjoint union of topological discs.  It is assumed that the curves in a filling are in \emph{minimal position}, i.e., for $\alpha\neq \beta \in \Omega$, the geometric intersection number is $i(\alpha, \beta)=|\alpha\cap\beta|.$

Filling systems of closed surfaces have become increasingly important in the study of the mapping class group of surfaces and the moduli space of hyperbolic surfaces through the systolic function, in particular. The study of filling system has its origin in the work of Thurston~\cite{WT}, where the author has defined a subset $\chi_g$ of moduli space $\mathcal{M}_g$ of a closed orientable surface of genus $g$, consisting of the hyperbolic surfaces whose systolic geodesics form a filling system. In~\cite{WT}, Thurston proposed $\chi_g$ as a candidate for a spine of $\mathcal{M}_g$ and has provided a sketch of a proof of the claim, but the proof is incomplete. In~\cite{TW2}, Thurston has used filling systems of size two to construct pseudo-Anosov~\cite{FM} mapping classes, and later Penner~\cite{RCP} generalize this construction to pair of multicurves which together forms a filling system. More recently, fillings of surfaces has been studied by Schmutz Schaller~\cite{Schmutz}, Aougab, Huang~\cite{TA}, Fanoni, Parlier~\cite{Parlier}, Sanki~\cite{BS} and others. 

To each filling system, we can associate two numbers; these are $s$, the number of curves in the filling system which we call the \emph{size} of a filling and $d$, the number of discs in the complement. Note that, we call a filling system as a \emph{filling pair}, if its size is two,  and a \emph{minimal filling}, if $d=1$.  Euler's equation implies that $\sum i(\gamma_i, \gamma_j)=2g-2+d$, when $\Omega=\left\lbrace \gamma_1, \dots, \gamma_s \right\rbrace$ is a  filling with $d$ complementary discs.
%Fillings of surfaces are studied extensively by Augab and Huang~\cite{TA}, Anderson, Parlier, Pettet~\cite{JA}, Sanki~\cite{BS}.
 
In [AH15], Aougab and Huang have studied \emph{minimal filling pairs}, i.e., the case when $s=2$ and $d=1$.  
%The mapping class group $\Mod(F_g)$ acts on the set of fillings with the same number of components in the complement.  In~\cite{TA} [Theorem 1.1], Aougab and Huang
The authors have shown that the growth of the number of mapping class group orbits of minimal filling pairs is exponential in the genus of the surface (see theorem 1.1~\cite{TA}). The complexity $T_1(\Omega)$ of a filling $\Omega$ is defined as the number of simple closed curves (up to isotopy) intersecting $\bigcup_{\gamma\in \Omega}\gamma$ only once. Then for a filling pair $\Omega$, $T_1(\Omega)\leq 4g-2$ with equality if and only if $\Omega$ is a minimal filling (see Theorem 1.2~\cite{TA}). 

In~\cite{BS} [Theorem 1.4], Sanki has shown that for every pair of integers $(g, d)$ with $(g, d)\neq (2, 1)$, there exists a \emph{filling pair} of $F_g$ such that the complement is a disjoint union of $d$ topological discs. Moreover, there is no minimal filling pair of a closed surface of genus $2$.

In~\cite{JA} [Theorem 1], Anderson, Parlier and Pettet have proved that, a filling system $\Omega$ of size $n$ with pairwise intersecting no more than $k$ times satisfies $k(n^2-n) \geq 4g-2$ and moreover, the inequality is sharp. Further, if $\Omega$ is a size $n$ filling system of \emph{systolic} curves of a closed hyperbolic surface of genus $g$, then $n> \pi \sqrt{g(g-1)}/\log(4g-2)$ and there exist hyperbolic surfaces of genus $g$ with a filling set of $n \;(\leq 2g)$ systolic curves (Theorem 3~\cite{JA}).  

In this paper, we study the admissible sizes of generic filling systems and also study geometric intersection numbers of curves in minimal fillings on oriented closed surfaces $F_g, g\geq 2$. We define, 
\begin{align*}
L_{g,b}&:=\min\{k| \text{ there exists a filling of $F_g$ of size $k$ with $b$ complementary discs}\}\; \text{and}\\\U_{g,b}&:=\max\{k|\text{ there exists a filling of $F_g$ of size $k$ with $b$ complementary discs}\}.
\end{align*}

It follows from Theorem 1.2, Proposition 4.2 in~\cite{BS} and Theorem 1.1 in~\cite{TA} that 
\[L_{g,b}= \begin{cases} 
      3 & \text{ if }(g,b)=(2,1) \text{ and}\\
      2 & \text{ otherwise }. 
   \end{cases}
\]
 Therefore, we focus on the growth of $\U_{g,b}$. We prove the theorem below:

\begin{theorem}\label{res:1}
$\U_{g,b}=2g+b-1$, for all $g,b\in \mathbb{N}$.
\end{theorem}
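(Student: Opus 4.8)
The plan is to establish the equality $\U_{g,b}=2g+b-1$ by proving the two inequalities separately: an upper bound $\U_{g,b}\leq 2g+b-1$ valid for every filling, and a matching lower bound obtained by explicitly constructing a filling that attains the size $2g+b-1$.

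\medskip

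For the upper bound, my main tool is the Euler characteristic computation already recorded in the introduction. If $\Omega=\{\gamma_1,\dots,\gamma_s\}$ is a filling with $b$ complementary discs, then the curves cut the surface into a CW-decomposition whose $2$-cells are the $b$ discs, whose edges are the arcs of the $\gamma_i$ between consecutive intersection points, and whose vertices are the intersection points. Writing $V$, $E$, $F$ for the numbers of vertices, edges, and faces, we have $F=b$ and $V=\sum_{i<j}i(\gamma_i,\gamma_j)$, the total number of intersection points. Since the curves are in minimal position and pairwise non-homotopic, every intersection point is a genuine transverse crossing of valence $4$, so a double count of edge-endpoints gives $E=2V$. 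Plugging into $V-E+F=2-2g$ yields $-V+b=2-2g$, i.e. $V=2g-2+b$, which is exactly the Euler relation $\sum i(\gamma_i,\gamma_j)=2g-2+b$ quoted in the introduction. The key point is now that $\Omega$ fills, so each vertex must have valence $4$ and in particular $V\geq 1$; moreover every curve $\gamma_i$ must meet the rest of the system (otherwise it bounds on one side and fails to be part of a filling together with non-homotopy), so each $\gamma_i$ contributes at least one intersection point. I would argue that with $s$ pairwise non-homotopic essential curves in minimal position filling the surface, the number of intersection points satisfies $V\geq s-1$: intuitively, the ``intersection graph'' whose nodes are the curves and whose edges record a crossing must be connected (a disconnected intersection pattern would leave a non-disc complementary region), and a connected graph on $s$ nodes has at least $s-1$ edges, hence at least $s-1$ intersection points. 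Combining $s-1\leq V=2g-2+b$ gives $s\leq 2g+b-1$, which is the desired upper bound.

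\medskip

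For the lower bound I would construct, for each pair $(g,b)$, an explicit filling of size exactly $2g+b-1$. The natural approach is to build the filling combinatorially as a fat graph (ribbon graph) whose underlying surface is $F_g$ with the correct number of complementary discs, a technique the paper is evidently set up to use given the keyword ``fat graph.'' Concretely, I would start from a known minimal filling of size $2g$ on $F_g$ (for instance the standard filling by $2g$ curves dual to a symplectic basis of $H_1(F_g;\integer)$, realized on a $4g$-gon), which fills with $b=1$ disc, and then increase the number of complementary discs from $1$ to $b$ by adding $b-1$ further curves, each chosen so that it splits one existing disc into two while remaining simple, essential, and non-homotopic to the others. Each added curve raises both $s$ and $b$ by one, preserving the relation $s=2g+b-1$. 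I would need to verify at each stage that minimal position and pairwise non-homotopy are maintained and that the complement stays a union of discs; the fat-graph / polygon model makes these checks a matter of tracking boundary words and face counts rather than delicate geometry.

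\medskip

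The main obstacle I anticipate is the combinatorial inequality $V\geq s-1$ in the upper-bound argument, i.e. justifying rigorously that the intersection graph of a filling is connected. This is geometrically plausible but needs care: one must rule out the possibility that a subcollection of the curves is disjoint from the rest yet the whole system still fills. The cleanest route is probably to argue that if the intersection graph splits into two components, then the surface decomposes along the curves into pieces that cannot all be discs (a disjoint essential subsystem separates off a complementary region of negative Euler characteristic or a region that is not simply connected), contradicting the filling hypothesis. Handling the degenerate low-complexity cases — especially $(g,b)=(2,1)$, which is singled out throughout the paper — will require separate attention, and I would check the construction directly there rather than relying on the general inductive step.
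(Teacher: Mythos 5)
Your upper bound is correct and is essentially the paper's Lemma~3.1: the paper also uses the Euler relation $V=2g-2+b$ together with connectivity, ordering the curves $\delta_1,\dots,\delta_k$ so that each $\delta_{m+1}$ meets $\bigcup_{i\le m}\delta_i$ and hence contributes at least one new vertex, which is the same counting as your ``connected intersection graph has at least $s-1$ edges.'' Your worry about justifying connectivity is easily discharged: if the complement of $\bigcup\Omega$ is a union of discs, each disc is attached along a loop lying in a single component of the $1$-complex $\bigcup\Omega$, so the components of $F_g$ are in bijection with those of $\bigcup\Omega$, forcing the latter to be connected.

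The genuine gap is in the base case of your lower bound. The $2g$ curves dual to a symplectic basis, read off the $4g$-gon, satisfy $i(a_i,b_i)=1$ and $i=0$ for all other pairs; their intersection graph is a disjoint union of $g$ edges, which is disconnected for $g\ge 2$, so by your own upper-bound argument this system cannot fill (its total intersection number is $g$, not the required $2g-1$). A correct example must realize a \emph{connected} pattern with exactly $2g-1$ intersection points, e.g.\ a chain $\gamma_1,\dots,\gamma_{2g}$ with $i(\gamma_j,\gamma_{j+1})=1$ and all other pairs disjoint, and one must then verify that the complement is a single disc --- this depends on the cyclic orders at the vertices and is exactly the content of the paper's Proposition~3.2, where the fat graph $\Gamma_g$ is written down and its unique boundary word is computed explicitly. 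Your inductive step (adding a curve that crosses the existing system exactly once, so that $\Delta V=1$, $\Delta E=2$, $\Delta F=1$) is sound and coincides with the paper's join operation $(\,\cdot\,,x)\#(\Gamma_1,y)$; note that it requires at each stage an edge whose two sides lie on the \emph{same} complementary disc (otherwise the new arc cannot close up inside a single face), which is the hypothesis $\vec{x},\cev{x}\in\partial$ tracked in Proposition~2.3 and its corollary. Finally, the case $(g,b)=(2,1)$ needs no special treatment here; it is exceptional only for the minimum $L_{g,b}$, not for the maximum.
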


The proof of Theorem~\ref{res:1} is constructive and the main ingredients are basic (topological) graph theoretic arguments, cellular decomposition of surfaces, and a construction called  \emph{join} of graphs (see subsection~\ref{jn}).
%
%and Euler's equation. %The proof has two steps. In the first step, we show that $\U_g\leq 2g$ and in the final step, we construct a minimal filling of $F_g$ of size $2g$. 

\begin{theorem}\label{res:2}
Let $g\geq 2$ be an integer. For $b\geq 1$ and each $s$ satisfying $L_{g,b}\leq s \leq \U_{g,b}$, there exists a filling $\Omega_{b,s}$ of $F_g$ such that $F_g\setminus \Omega_{b,s}$ is a disjoint union of $b$ discs, and $|\Omega_{b,s}|=s$. 
\end{theorem}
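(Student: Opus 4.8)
The plan is to reduce everything to the Euler relation and then realize the intermediate sizes by a single genus-raising surgery. Viewing a filling $\Omega$ of $F_g$ with $b$ complementary discs as a $4$-valent graph cellularly embedded in $F_g$, with one vertex at each intersection point, the relation $\sum_{i<j} i(\gamma_i,\gamma_j)=2g-2+b$ says that the number $V$ of vertices equals $2g-2+b$, which depends only on $(g,b)$ and not on the size $s$. Thus every filling of $F_g$ with $b$ discs has the same number of crossings, and the task is purely to redistribute these $V$ crossings among $s$ curves for each admissible $s$. The two endpoints are already available: a filling of size $L_{g,b}$ exists by \cite{BS} and \cite{TA}, and a filling of the maximal size $\U_{g,b}=2g+b-1$ exists by Theorem~\ref{res:1}. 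It therefore suffices to realize every intermediate size $L_{g,b}<s<\U_{g,b}$.

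First I would set up a strong induction on the genus, whose step lifts the \emph{entire} interval of admissible sizes at once. The engine is one surgery that sends a filling triple $(g-1,b,s-1)$ to $(g,b,s)$: given a filling of $F_{g-1}$ with $b$ discs and size $s-1$, choose a complementary disc $D$, a polygon whose sides are arcs of $\Omega$; attach a handle with both feet in the interior of $D$ and route a single new simple closed curve $\gamma_{\mathrm{new}}$ once over this handle so that it crosses the existing arcs in exactly two points. Since $V=2g-2+b$ exceeds $2(g-1)-2+b$ by exactly $2$, the two new crossings are precisely the increment forced by the Euler relation on $F_g$; consequently, as soon as the surgered configuration is again cellular, the number of complementary discs is automatically $b$, and the size has grown by one. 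Under the inductive hypothesis that all sizes in $[L_{g-1,b},\U_{g-1,b}]=[2,2g+b-3]$ are realized with $b$ discs, applying the move to the size-$(s-1)$ filling yields every size $s$ with $3\le s\le 2g+b-2$; the two remaining sizes $s=L_{g,b}$ and $s=\U_{g,b}$ are supplied directly as above, so the whole interval $[L_{g,b},\U_{g,b}]$ is covered at genus $g$. The join construction of subsection~\ref{jn} can be used interchangeably to assemble configurations and to keep track of the disc count.

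The concrete steps are then: (1) prove the genus-one base cases, realizing for each $b\ge1$ all sizes between $2$ and $\U_{1,b}=b+1$ with $b$ discs by explicit torus configurations (two curves meeting $b$ times already give $b$ discs, and chains of curves give the larger sizes); (2) describe the handle-and-curve surgery precisely in terms of the polygonal side-pairing of the chosen disc $D$, so that $\gamma_{\mathrm{new}}$ is visibly simple and meets the old arcs in exactly two points; and (3) verify that after surgery every complementary region is again a disc. Step (3) is where the Euler bookkeeping pays off, since once cellularity is checked the count $b$ of discs comes for free, and the verification reduces to a local, essentially combinatorial, check that the handle splits $D$ into discs rather than into an annular or higher-genus region. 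Along the way I would record the curves explicitly enough to confirm that they remain simple, pairwise non-homotopic, and in minimal position, the last by the standard no-bigon criterion on the resulting cellular decomposition.

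The main obstacle I expect is exactly the cellularity in step (3): attaching a handle inside a disc and adding one curve can easily leave an annular complementary region, which would destroy the filling property, so the routing of $\gamma_{\mathrm{new}}$ must be chosen so that the argument shows the two resulting regions are both discs. A secondary difficulty is the exceptional pair $(g,b)=(2,1)$, where $L_{g,b}=3$ and no size-two filling exists; this forces a small separate model at the bottom of the induction, and it also propagates to $(g,b)=(3,1)$, where the size-three filling cannot be produced from a (nonexistent) size-two filling of $F_2$ with one disc and must instead be built directly. Finally, maintaining minimal position and pairwise non-homotopy uniformly across the induction is routine but must be carried through, since each surgery introduces a new curve that could a priori be homotopic to an old one or create a bigon.
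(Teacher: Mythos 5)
Your strategy is genuinely different from the paper's: you induct on the genus at fixed $b$, raising $(g-1,b,s-1)$ to $(g,b,s)$ by a topological handle-plus-one-curve surgery, whereas the paper fixes the genus and inducts on the number of discs, first establishing the minimal case $b=1$ for all sizes (Proposition~\ref{prop:minimal}, via plumbing and connected sums of fat graphs) and then repeatedly applying the \emph{join} with the genus-one graph $\Gamma_1$, each join adding exactly one curve and one complementary disc while preserving the genus (Proposition~\ref{prop:join}). However, your proposal has a genuine gap, and it sits exactly at the step you flag as the main obstacle: the surgery lemma is not proved, and as stated it cannot be proved. If $\gamma_{\mathrm{new}}$ meets the old system in exactly two points, it is divided into exactly two arcs, each lying in a single complementary region of the old system on $F_g$; the handled region $H$ (the disc $D$ with a tube attached) has $\chi(H)=-1$, and cutting a surface along $k$ disjoint properly embedded arcs raises $\chi$ by $k$, so $H$ can be cut into discs only if it contains at least two of the arcs --- hence \emph{both} arcs of $\gamma_{\mathrm{new}}$ --- and then it is cut into exactly \emph{one} disc, not two. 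Your stated goal that ``the two resulting regions are both discs'' is therefore inconsistent with your own Euler count (it would produce $b+1$ regions, and two pieces with total Euler characteristic $1$ cannot both be discs).

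The correct version of the claim forces a combinatorial hypothesis you do not have. For both arcs of $\gamma_{\mathrm{new}}$ to lie in $H$, each of the two crossing points must sit on an edge of the old graph having $D$ on \emph{both} sides; when $b\geq 2$ there is no reason the size-$(s-1)$ filling of $F_{g-1}$ supplied by your inductive hypothesis has any complementary disc with a repeated boundary edge, so the surgery may be impossible to perform on it. (This is precisely the kind of issue the paper's Proposition~\ref{prop:5.1}(2) and Corollary~\ref{cor:join} are engineered to control: they guarantee, at every stage of the iteration, an edge whose two directed versions lie on the same boundary component, which is the fat-graph analogue of the condition you need.) To repair your argument you would have to strengthen the induction to carry this property along, or allow variant routings (e.g.\ a tube joining two adjacent discs), and you would still need to supply the unconstructed base cases and the exceptional configurations at $(g,b)=(2,1)$ and $(3,1)$ that you correctly identify. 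As it stands, the proposal is a plausible plan whose central lemma is both unproven and mis-stated.
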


The proof of Theorem~\ref{res:2} is by explicit construction in an inductive procedure. In particular, we use three operations on fat graphs so-called \emph{join}, \emph{connected sum}, and \emph{plumbing} (see subsections~\ref{jn},~\ref{cs}, and~\ref{pl}).

Finally, we study geometric intersection numbers of the curves in minimal fillings of $F_g$. 
\begin{theorem}\label{res:3}
Let $\Omega=\left\lbrace \gamma_1, \dots, \gamma_s \right\rbrace$ be a minimal filling of $F_g, g\geq 2$. Then we have $$\max\left\lbrace i(\gamma_i, \gamma_j)| i\neq j \right\rbrace\leq 2g-s+1.$$ Furthermore, for each $s$ satisfying $L_g\leq s \leq \U_g$, there exists a minimal filling $\Omega_s$ of size $s$ such that $$\max\left\lbrace i(\alpha, \beta)| \alpha \neq \beta \in \Omega_s\right\rbrace = 2g-s+1.$$
\end{theorem}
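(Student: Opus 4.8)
For the inequality I would argue purely combinatorially from Euler's relation. Since $\Omega$ is a minimal filling, the curves are in minimal position and their union is a $4$-valent graph $G\subset F_g$ whose unique complementary region is a disc. Writing $V$ for the number of vertices of $G$ — equivalently $V=\sum_{i<j} i(\gamma_i,\gamma_j)$, since every vertex is a transverse double point of two distinct curves — and using $E=2V$, $F=1$, Euler's relation $V-E+F=2-2g$ gives $V=2g-1$, which is the $d=1$ case of the identity recorded in the Introduction. Next I would pass to the \emph{intersection graph} $\mathcal{G}$ on vertex set $\{\gamma_1,\dots,\gamma_s\}$, joining $\gamma_i$ to $\gamma_j$ whenever $i(\gamma_i,\gamma_j)>0$. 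Because $F_g\setminus\Omega$ is a single disc, $\bigcup_i\gamma_i$ is connected, so $\mathcal{G}$ is connected and has at least $s-1$ edges. Let $M=i(\gamma_p,\gamma_q)$ realize the maximum. If $M=0$ there is nothing to prove (note $2g-s+1\ge 1$ since $s\le\U_{g,1}=2g$ by Theorem~\ref{res:1}); otherwise $\{p,q\}$ is an edge of $\mathcal{G}$, leaving at least $s-2$ further edges, each contributing at least $1$ to $\sum_{\{i,j\}\neq\{p,q\}} i(\gamma_i,\gamma_j)$. Hence
\[
M=(2g-1)-\sum_{\{i,j\}\neq\{p,q\}} i(\gamma_i,\gamma_j)\le (2g-1)-(s-2)=2g-s+1.
\]

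For the sharpness statement I would aim for the configuration whose intersection graph is a path $\gamma_1-\gamma_2-\cdots-\gamma_s$, with the first edge of multiplicity $m:=2g-s+1$ and every other edge of multiplicity $1$; the total $m+(s-2)=2g-1$ then matches the vertex count forced above. The plan is to realize this pattern as a $4$-valent fat graph assembled from the operations of the paper: use the \emph{join} (subsection~\ref{jn}) to produce the ``heavy'' core pair $\{\gamma_1,\gamma_2\}$ meeting in $m$ points, and then graft on $\gamma_3,\dots,\gamma_s$ one at a time as a chain, each new curve meeting only the previous one and in a single point, via \emph{plumbing} and \emph{connected sum} (subsections~\ref{pl},~\ref{cs}). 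For $g\ge 3$ the base case $s=2$ is just a minimal filling pair with $i(\gamma_1,\gamma_2)=2g-1$, which exists by~\cite{TA}; for $g=2$ the base case is $s=3$ (a core pair meeting twice with one extra curve attached once). The two extremes — a filling pair at $s=\L_{g,1}$ and a pure chain at $s=\U_{g,1}=2g$ — are the endpoints that this family interpolates.

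Two checks then remain, and the second is the crux. First, the constructed curves are in minimal position and the maximum really equals $m$: this follows once the complement is a single disc, because that disc is a $2E$-gon with $2E=4(2g-1)>2$ sides, so there is no complementary bigon, and by the innermost-bigon principle there is no bigon at all. Second, I must verify that the fat graph has exactly one boundary component, equivalently that $F_g\setminus\Omega_s$ is a single disc; the genus is then automatically $g$ from $\chi=V-2V+1=2-2g$. The main difficulty is controlling this boundary count while grafting the single-intersection curves onto the heavy core, and this is exactly what the join/plumbing/connected-sum bookkeeping is designed to manage: each operation has a predictable effect on the number of boundary circles of the thickened graph, so I would track that number inductively and choose the attaching data (the cyclic orders at the newly created vertices) so that it stays equal to $1$ at every step, which also guarantees the size grows by exactly one and the heavy multiplicity drops by exactly one, preserving the identity $m+(s-2)=2g-1$ throughout.
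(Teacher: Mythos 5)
Your argument for the inequality is correct, and it is in fact a slightly cleaner route than the paper's: the paper (Proposition~\ref{lem:WIG}) passes to the weighted intersection graph and sums vertex degrees, extracting $4g-2\geq(\omega_{\max}+1)+\omega_{\max}+2(s-3)+1$ from connectedness, whereas you bound the sum of edge weights directly by $M+(s-2)\leq 2g-1$. Same content, less bookkeeping, and your version makes the role of the identity $\sum i(\gamma_i,\gamma_j)=2g-1$ transparent.

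The sharpness half, however, has a genuine gap, and it sits exactly where you located ``the crux.'' For a $4$-regular fat graph one has $\chi=V-2V=2-2g-b$, i.e.\ $b=2-2g+V$, so $b\equiv V \pmod 2$. Grafting a new curve that meets the existing configuration in a single point adds exactly one $4$-valent vertex (the new curve is a loop at that vertex), so it flips the parity of $b$: no choice of attaching data or cyclic orders can keep $b=1$ at every step of your one-at-a-time chain. Topologically, the new curve enters the unique complementary disc as a single chord and necessarily cuts it into two discs. So the inductive invariant you propose to maintain is unachievable, and the intermediate stages of your construction cannot all be minimal fillings. This is precisely why the paper does not build the chain curve by curve: for even $s$ (Lemma~\ref{lem:weighted_even}, Case 1) it performs a \emph{single} plumbing of a one-boundary filling pair of genus $g-s/2+1$ (the heavy edge, of weight $2g-s+1$) with a one-boundary fat graph of genus $s/2-1$ whose $s-2$ standard cycles already form a path; the one new plumbing vertex supplies the connecting weight-one edge and Proposition~\ref{plumb:boundary} gives $b=1+1-1=1$. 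For $s=3$ it needs a separate construction (Lemma~\ref{weighted_triple}, starting from a \emph{two}-boundary filling pair of $F_{g-1}$), and for odd $s$ it inducts on $g$ by plumbing in a genus-one, one-boundary graph, which adds \emph{two} standard cycles and two vertices per step, preserving parity. Note also that for odd $s$ your heavy core cannot even be realized as a one-boundary filling pair of a subsurface, since that would force $2g'-1=2g-s+1$ with $s$ even; so the odd case genuinely requires a different device, not just more careful bookkeeping.
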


%%%%%%%%%%%%%%%%%%%%% Section 2: Fat graphs and operations on fat graphs--(1) Join of two fat graphs, (2) Connected sum of two fat graphs and (3) Plumbing of two fat graphs. %%%%

\section{Fat graphs}
In this section, we recall some definitions from graph theory, in particular fat graphs (Section 2 in~\cite{BS}). Next, we define three binary operations on fat graphs so-called \emph{join}, \emph{plumbing}, and \emph{connected sum}. We also develop three technical results namely, Proposition~\ref{prop:join}, Proposition~\ref{connected_sum}, and Proposition~\ref{plumb:boundary}, which are essential in the subsequent sections.

\begin{dfn}
A graph is a triple $G=(E, \sim, \sigma_1)$, where 
\begin{enumerate}
\item $E$ is a set of directed edges containing an even number of elements.

\item  $\sim$ is an equivalence relation on $E$ and 

\item  $\sigma_1:E\to E$ is a fixed point free involution which maps a directed edge to the edge with the reverse direction.
\end{enumerate}
The set $V:=E/\!\!\sim$ of equivalence classes of $\sim$ is the vertex set and $E_1:=E/\sigma_1$ is the set of undirected edges of $G$. 
\end{dfn}
%cyclic order on the set of directed edges at the equivalence classes of \sim

A fat graph structure on $G$ is a permutation $\sigma_0: E \to E$, whose cycles correspond to cyclic order on the set of directed edges emanating from each vertex. We assume that the degree of each vertex of a fat graph is at least $3$. A fat graph is called \emph{decorated} if the degree of each vertex is an even integer. A cycle in a fat graph is called \emph{standard} if every two consecutive edges in the cycle are opposite to each other in the cyclic order at their shared vertex. Given a fat graph $\Gamma$, the set of boundary components is denoted by $\partial\Gamma$, which corresponds to the set of cycles of the permutation $\sigma_1*\sigma_0^{-1}$ (see Lemma 2.4 in~\cite{BS}). 

By thickening the edges of a fat graph $\Gamma$ one obtain a unique topological surface and its genus is called the genus of the fat graph which we denote by $g(\Gamma)$. 

\begin{exa}\label{eg:triple_2}
Consider the  fat graph $G_1=(E,\sim,\sigma_1,\sigma_0)$ of genus $2$ (see Figure~\ref{eg:1}), described as follows:

\begin{enumerate}
\item $E=\{\vec{f}_i, \cev{f}_i\;|\;i=1,2,\dots, 6\}$.
\item The set of equivalence classes of $\sim$ is $V=\{u_1, u_2, u_3\}$, where 
$ u_1=(\vec{f}_1, \vec{f}_2, \vec{f}_3, \vec{f}_4),\\\,u_2=(\cev{f}_3, \cev{f}_4, \vec{f}_5, \cev{f}_2),\text{ and }u_3=(\cev{f}_5, \vec{f}_6, \cev{f}_1, \cev{f}_6).$
\item The fixed point free involution $\sigma_1$ is defined by $\sigma_1(\vec{f}_i)=\cev{f}_i$ for $i=1,2,\dots, 6.$
\item The permutation (fat graph structure) $\sigma_0$ is given by $ \sigma_0=\prod\limits_{i=1}^{3}u_i$.
\end{enumerate}
It is easy to see from Figure~\ref{eg:1} that $G_1$ has one boundary component and $3$ standard cycles of lengths $3,2$ and $1$. Therefore $G_1$ corresponds to a minimal filling triple of $F_2$.
\begin{figure}[htbp]
\begin{center}
\begin{tikzpicture}
\draw [rounded corners=2mm](4, 1) -- (4, 0) -- (2,0) -- (2, 1) -- (0, 1) -- (0, 3.9) -- (0.7, 3.9);

\draw [rounded corners=2mm] (3.7, 1)-- (3.7, 0.3) -- (2.3, 0.3) -- (2.3, 1) -- (5, 1) -- (5, 3.9) -- (4,3.9) -- (4, 5.6) -- (0.7,5.6) -- (0.7, 2.6) -- (2,2.6) -- (2, 3.6) -- (3.7, 3.6) -- (3.7, 2.9) -- (3, 2.9) -- (3, 3.6);

\draw [rounded corners=2mm](4, 1.3) -- (4, 2.3) -- (2,2.3) -- (2, 1.3) -- (0.3, 1.3) -- (0.3, 3.6) -- (0.7, 3.6);

\draw [rounded corners=2mm](3.7, 1.3) -- (3.7, 2) -- (2.3,2) -- (2.3, 1.3) -- (4.7, 1.3) -- (4.7, 3.6) -- (4, 3.6) -- (4, 2.6) -- (2.7, 2.6) -- (2.7, 3.6);

\draw [rounded corners=2mm](3, 3.9) -- (3, 4.9) -- (1.7,4.9) -- (1.7, 3.9) -- (1, 3.9);

\draw [rounded corners=2mm](2.7, 3.9) -- (2.7, 4.6) -- (2,4.6) -- (2, 3.9) -- (3.7, 3.9) -- (3.7, 5.3) -- (1, 5.3)-- (1, 2.9) -- (1.7,2.9) -- (1.7, 3.6) -- (1, 3.6); 

\draw [->] (4.5, 3.6) -- (4.51, 3.6)node [below] {$\vec{f}_1$}; \draw [->] (2.5, 3.6) -- (2.51, 3.6)node [below] {$\cev{f}_3$}; \draw [->] (3.0, 1) -- (3.01, 1)node [below] {$\cev{f}_1$};  \draw [->] (1.5, 1.3) -- (1.49, 1.3)node [above] {$\cev{f}_5$}; \draw [->] (1.25, 3.9) -- (1.24, 3.9)node [above] {$\cev{f}_5$}; \draw [->] (3.21, 3.9) -- (3.2, 3.9)node [above] {$\vec{f}_3$}; \draw [->] (2.3, 1.7) -- (2.3, 1.71)node [right] {$\cev{f}_6$}; \draw [->] (4, 4.4) -- (4, 4.41)node [right] {$\vec{f}_2$}; \draw [->] (2, 4.3) -- (2, 4.31)node [right] {$\cev{f}_4$}; \draw [->] (1.7, 3.21) -- (1.7, 3.2)node [left] {$\cev{f}_2$}; \draw [->] (3.7, 3.21) -- (3.7, 3.2)node [left] {$\vec{f}_4$};\draw [->] (2, 0.51) -- (2, 0.5)node [left] {$\cev{f}_6$};
\draw (3.85, 3.75) node {$u_1$};\draw (1.85, 3.75) node {$u_2$}; \draw (2.15, 1.15) node {$u_3$};

\end{tikzpicture}
\end{center}
\caption{$G_1$}\label{eg:1}
\end{figure}
\end{exa}

%%%%%%%%%%%%%%%%%%%%   Join of two fat graphs %%%%%%%%%%%%%%%%%%%%%%%%%%%

\subsection{Join of two fat graphs}\label{jn}
We define a binary operation on fat graphs called \emph{join}. Let $\Gamma_i, i=1,2$, be two fat graphs. Consider $x=\left\lbrace \vec{x}, \cev{x} \right\rbrace$ and $y =\left\lbrace \vec{y}, \cev{y} \right\rbrace$ are two edges of $\Gamma_1$ and $\Gamma_2$ respectively. In this operation, we cut the edges $x$ and $y$ into two edges $x_i=\left\lbrace \vec{x}_i, \cev{x}_i \right\rbrace$ and $y_i=\left\lbrace \vec{y}_i, \cev{y}_i \right\rbrace$, $i=1, 2$. Then we join $x_1$ with $y_1$ and $x_2$ with $y_2$ at the end points and get new edges $e$ and $f$. More precisely, $e=\left\lbrace \vec{e}, \cev{e} \right\rbrace$ and $f=\left\lbrace \vec{f}, \cev{f} \right\rbrace$ where,
\begin{align*}
\vec{e} = \vec{x}_1*\cev{y}_1 \text{ and } \cev{e} = \vec{y}_1*\cev{x}_1,\\ \vec{f} = \cev{y}_2 * \vec{x}_2 \text{ and } \cev{f} = \cev{x}_2 * \vec{y}_2.
\end{align*}
Here, $*$ is concatenation. For a local picture of the operation, we refer to Figure~\ref{join}. Now, we describe the new graph $\Gamma = \left(\Gamma_1, x\right)\# \left(\Gamma, y\right)$, call the join of $\Gamma_1$ and $\Gamma_2$ along $x$ and $y$, below: $\Gamma=\left( E, \sim, \sigma_1, \sigma_0\right)$
\begin{enumerate}
\item The set of directed edges is given by, $$E = \left( E_1 \cup E_2 \setminus \left\lbrace \vec{x}, \cev{x}, \vec{y}, \cev{y} \right\rbrace \right) \cup \left\lbrace \vec{e}, \cev{e}, \vec{f}, \cev{f} \right\rbrace,$$ where $E_1$ and $E_2$ are the set of directed edges of $\Gamma_1$ and $\Gamma_2$ respectively.   

\item The fixed point free involution $\sigma_1$ is defined as usual $\sigma\left( \vec{a} \right) = \cev{a}$ for all $\vec{a}\in E$.

\item Let $v = \left\lbrace \vec{e}_i: i=1, \dots, m \right\rbrace\in V_1\cup V_2$, where $V_1$ and $V_2$ are the set of vertices $\Gamma_1$ and $\Gamma_2$ respectively. We define $v'=\left\lbrace {\vec{e}_i}^{\,'} : i=1,\dots, m\right\rbrace$, where
\[
{\vec{e}_i}^{\,'}=
\begin{cases}
\vec{e}_i & \text{ if } \vec{e}_i\in E_1 \cup E_2 \setminus \left\lbrace \vec{x}, \cev{x}, \vec{y}, \cev{y} \right\rbrace \text{ and }\\
\vec{e}, \cev{e}, \vec{f} \text{ and } \cev{f} & \text{if } \vec{e}_i=\vec{x}, \vec{y}, \cev{y} \text{ and } \cev{x} \text{ respectively.}
\end{cases}
\]
Then the set equivalence classes of the relation $\sim$ is $V=E/\sim = \left\lbrace v': v\in V_1\cup V_2\right\rbrace.$
\item If $\sigma_v = \left( \vec{e}_1, \dots , \vec{e}_m\right)$ is the cyclic order at the vertex $v\in V_1\cup V_2$, then we define $\sigma_{v'} = \left( {\vec{e}_1}^{\,'}, \dots, {\vec{e}_m}^{\,'}\right)$ is the cyclic order at the vertex $v'\in V$. Thus $\sigma_0=\prod\limits_{v'\in V}\sigma_{v'}.$
\end{enumerate}

\begin{prop}\label{prop:join}
Let $\Gamma_i, i=1, 2,$ be two decorated fat graphs of genus $g_i$ with $b_i$ boundary components and $s_i$ standard cycles. Consider $x=\left\lbrace \vec{x}, \cev{x} \right\rbrace$ and $y=\left\lbrace \vec{y}, \cev{y} \right\rbrace$ are two edges of $\Gamma_1$ and $\Gamma_2$ respectively. Then the join $\Gamma = \left( \Gamma_1, x \right) \# \left( \Gamma_2, y\right)$ of $\Gamma_1$ and $\Gamma_2$ along $x, y$ has the following properties:
\begin{enumerate}
\item The number of boundary components in $\Gamma$ is given by 
\[
\lvert \partial\Gamma \rvert=
\begin{cases}
b_1+b_2 & \text{ if } \vec{x}, \cev{x} \in \partial \text{ and } \vec{y}, \cev{y}\in \eta  \text{ for some $\partial\in \partial\Gamma_1, \eta\in \partial\Gamma_2$, and}\\
b_1+b_2-2 & \text{ otherwise. } 
\end{cases}
\]

\item The number of standard cycles in $\Gamma$ is $s_1+s_2-1$.

\item The genus of $\Gamma$ is given by
\[
g\left( \Gamma \right)=
\begin{cases}
g_1+g_2-1 & \text{ if } \vec{x}, \cev{x} \in \partial \text{ and } \vec{y}, \cev{y}\in \eta  \text{ for some $\partial\in \partial\Gamma_1, \eta\in \partial\Gamma_2$, and }\\
g_1+g_2 & \text{ otherwise. } 
\end{cases}
\]
\end{enumerate}
\end{prop}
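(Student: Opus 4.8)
The plan is to handle all three parts through the two permutations that govern a decorated fat graph: the boundary permutation $\sigma_1\sigma_0^{-1}$ and the ``opposite-edge'' permutation whose cycles are the standard cycles. First I record the data the join leaves untouched. Cutting $x,y$ and regluing only relabels the half-edges at the four endpoints of $x$ and $y$ (sending $\vec{x},\cev{x},\vec{y},\cev{y}$ to $\vec{e},\cev{f},\cev{e},\vec{f}$ according to (4) of the construction) and creates no new vertices, so the vertex set is $V_1\cup V_2$ and the number of undirected edges is $\lvert E_1\rvert+\lvert E_2\rvert$. Each vertex keeps its degree, so $\Gamma$ is again decorated, and since $e$ and $f$ each join a vertex of $\Gamma_1$ to a vertex of $\Gamma_2$, the graph $\Gamma$ is connected.

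For (1) I would compute $\sigma_1\sigma_0^{-1}$ explicitly in a neighbourhood of the four modified vertices. The outcome I expect is that the new boundary permutation agrees with the disjoint union of the two old boundary permutations of $\Gamma_1$ and $\Gamma_2$ everywhere except along a local ``gadget'' supported on $\{\vec{e},\cev{e},\vec{f},\cev{f}\}$ together with their cyclic-order neighbours at the four endpoints. Calling the directed edges that the gadget feeds into the ``exits'' and those that feed into it the ``entries'', the rest of the permutation gives a fixed return bijection $\rho$ from exits to entries that is \emph{identical} before and after the join. Hence the boundary count reduces to comparing the number of cycles of $\rho\mu_{\mathrm{old}}$ and $\rho\mu_{\mathrm{new}}$, where $\mu_{\mathrm{old}},\mu_{\mathrm{new}}$ are the explicit matchings read off from $x,y$ and from $e,f$ respectively. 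The key point is that $\mu_{\mathrm{new}}$ differs from $\mu_{\mathrm{old}}$ by a product of two transpositions, and that the hypothesis ``$\vec{x},\cev{x}\in\partial$ and $\vec{y},\cev{y}\in\eta$'' is precisely the statement that $\rho$ joins the two $\Gamma_1$-slots to each other and the two $\Gamma_2$-slots to each other in a ``crossed'' rather than ``straight'' fashion. A short case check (same/same, same/different, different/different) then shows the cycle count is preserved in the first case and drops by $2$ otherwise, giving $b_1+b_2$ respectively $b_1+b_2-2$.

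For (2) I would run the same argument for the permutation $T=\theta\,\sigma_1$ whose cycles are the standard cycles, where $\theta$ sends a directed edge emanating from a vertex to the diametrically opposite one (well defined because $\Gamma$ is decorated). The edge $x$ lies on a unique standard cycle $C_x$ of $\Gamma_1$ and $y$ on a unique standard cycle $C_y$ of $\Gamma_2$, while every other standard cycle avoids the gadget and is carried over unchanged. The local computation of $T$ around $e,f$ should show that $C_x$ and $C_y$ are spliced into a single standard cycle of $\Gamma$, so the count becomes $(s_1-1)+(s_2-1)+1=s_1+s_2-1$. Concretely, after passing to the reduced permutation on the four gadget slots, the old matching is the identity (each of $C_x,C_y$ meets the gadget once, contributing its two orientations), whereas the new matching is a single product of two transpositions; this drops the number of directed standard-edge orbits by two, hence the number of standard cycles by one.

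Finally (3) is immediate from (1): capping the boundary circles of $\Gamma$ by discs produces a closed surface with $2-2g(\Gamma)=\lvert V\rvert-\lvert E\rvert+\lvert\partial\Gamma\rvert$, and substituting $\lvert V\rvert=\lvert V_1\rvert+\lvert V_2\rvert$, $\lvert E\rvert=\lvert E_1\rvert+\lvert E_2\rvert$, the analogous identities for $\Gamma_1$ and $\Gamma_2$, and the two values of $\lvert\partial\Gamma\rvert$ from (1), yields $g_1+g_2-1$ in the first case and $g_1+g_2$ in the second. I expect the only genuine work to be in (1): arranging the entry/exit bookkeeping so that the ``crossed versus straight'' dichotomy is manifestly equivalent to the same-boundary hypothesis, and checking that no two of the local half-edges accidentally coincide, which would require a separate but routine direct verification.
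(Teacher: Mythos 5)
Your proposal is correct and follows essentially the same route as the paper: part (1) by a local computation of how the boundary cycles recombine, split into the same cases (both ends of $x$ and of $y$ in one boundary component; all four in distinct components; the mixed case), part (2) by observing that the standard cycles through $x$ and $y$ splice into a single standard cycle while all others persist, and part (3) by the Euler-characteristic identity $|V|-|E|+|\partial\Gamma|=2-2g$. The paper simply writes the boundary components as explicit words $P\vec{x}Q\cev{x}$, $R\vec{y}S\cev{y}$, etc.\ and traces them through the regluing, which is the concrete form of your ``return bijection'' bookkeeping for the cycles of $\sigma_1\sigma_0^{-1}$.
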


\begin{proof}(1) 
Let us assume that $\partial\Gamma_1=\left\lbrace \partial_1, \dots, \partial_{b_1} \right\rbrace$ and $\partial\Gamma_2=\left\lbrace \zeta_1, \dots, \zeta_{b_2} \right\rbrace$. Now, we consider the following four cases:
\begin{case}
In this case, we assume that $\vec{x}, \cev{x}\in \partial_1$ and $\vec{y}, \cev{y}\in \zeta_1$ for some $\partial_1\in \partial\Gamma_1,\text{ and }\zeta_1\in \partial\Gamma_2$. Thus there are directed paths $P, Q$ and $R, S$ on the boundary of $\Gamma_1$ and $\Gamma_2$ respectively, such that 
\begin{align*}
\partial_1 = P\vec{x}Q\cev{x}=P\vec{x}_1\vec{x}_2Q\cev{x}_2\cev{x}_1 \text{ and } \zeta_1 = R \vec{y} S \cev{y}= R \vec{y}_1\vec{y}_2S\cev{y}_2\cev{y}_1.
\end{align*}
Note that, we split $x$ into $x_1, x_2$ and $y$ into $y_1, y_2$. The set of boundary components of the join fat graph $\Gamma$ is given by (see Figure~\ref{join}) 
\begin{align*}
\partial\Gamma = \left\lbrace \partial_2, \dots, \partial_{b_1} \right\rbrace \cup \left\lbrace \zeta_2, \dots, \zeta_{b_2} \right\rbrace \cup \left\lbrace \eta_1, \eta_2  \right\rbrace,
\end{align*}
where $\eta_1, \eta_2$ are given by following (see Figure~\ref{join}):
\begin{align*}
\eta_1 = P \left( \vec{x}_1 * \cev{y}_1 \right) R \left( \vec{y}_1 * \cev{x}_1 \right) = P\;\vec{e}\;R\;\cev{e} \text{ and } \eta_2 = Q \left( \cev{x}_2 * \vec{y}_2 \right) S \left( \cev{y}_2 * \vec{x}_2 \right)= Q\;\cev{f}\;S\;\vec{f}. 
\end{align*}

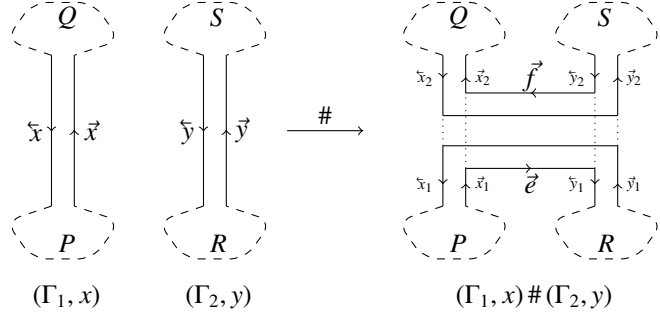
\begin{figure}[htbp]
\begin{center}
\begin{tikzpicture}
\draw (-3.3, -1) -- (-3.3, 1); \draw (-3, -1) -- (-3, 1); \draw [dashed, rounded corners=2mm] (-3.3, -1) -- (-3.9, -1.2) -- (-3.55, -1.7) -- (-2.75,-1.7) -- (-2.4,-1.2) -- (-3, -1); \draw [dashed, rounded corners = 2mm] (-3.3, 1) -- (-3.9, 1.2) -- (-3.55, 1.7) -- (-2.75, 1.7) -- (-2.4, 1.2) -- (-3, 1); \draw [->] (-3.3, 0) -- (-3.3, -0.01)node [left] {$\cev{x}$}; \draw [->] (-3, 0) -- (-3, 0.01)node [right] {$\vec{x}$}; \draw (-3.1, 1.5) node {$Q$}; \draw (-3.1, -1.5) node {$P$}; 
\draw (-3.1, -2.2) node {$\left( \Gamma_1, x \right)$}; \draw (-1.1, -2.2) node {$\left( \Gamma_2, y \right)$};

\draw (-1.3, -1) -- (-1.3, 1); \draw (-1, -1) -- (-1, 1); \draw [dashed, rounded corners=2mm] (-1.3, -1) -- (-1.9, -1.2) -- (-1.55, -1.7) -- (-0.75,-1.7) -- (-0.4,-1.2) -- (-1, -1); \draw [dashed, rounded corners = 2mm] (-1.3, 1) -- (-1.9, 1.2) -- (-1.55, 1.7) -- (-0.75, 1.7) -- (-0.4, 1.2) -- (-1, 1); \draw [->] (-1.3, 0) -- (-1.3, -0.01)node [left] {$\cev{y}$}; \draw [->] (-1, 0) -- (-1, 0.01)node [right] {$\vec{y}$}; \draw (-1.1, 1.5) node {$S$}; \draw (2.05, 1.5) node {$Q$}; \draw (-1.1, -1.5) node {$R$}; \draw (2.05, -1.5) node {$P$}; \draw (4, -1.5) node {$R$}; \draw (4, 1.5) node {$S$};

\draw [->] (-0.2, 0) -- (0.8, 0); \draw (0.3, 0.2) node {$\#$};

\draw (1.85, -1) -- (1.85, -0.2); \draw (1.85, 0.2)-- (1.85, 1); \draw [->] (1.85, -0.7) -- (1.85, -0.71) node [left] {\tiny $\cev{x}_1$}; \draw [->] (2.15, -0.71) -- (2.15, -0.7) node [right] {\tiny $\vec{x}_1$}; \draw [->] (1.85, 0.71) -- (1.85, 0.7) node [left] {\tiny $\cev{x}_2$}; \draw [->] (2.15, 0.7) -- (2.15, 0.71) node [right] {\tiny $\vec{x}_2$};  \draw (2.15, -1) -- (2.15, -0.5); \draw (2.15, 0.5)-- (2.15, 1); \draw [dashed, rounded corners=2mm] (1.85, -1) -- (1.25, -1.2) -- (1.6, -1.7) -- (2.4,-1.7) -- (2.75,-1.2) -- (2.15, -1); \draw [dashed, rounded corners = 2mm] (-1.3+3.15, 1) -- (1.25, 1.2) -- (1.6, 1.7) -- (2.4, 1.7) -- (2.75, 1.2) -- (2.15, 1);

\draw (3.85, -1) -- (3.85, -0.5); \draw (3.85, 0.5)-- (3.85, 1); \draw [->] (3.85, -0.7) -- (3.85, -0.71) node [left] {\tiny $\cev{y}_1$}; \draw [->] (4.15, -0.71) -- (4.15, -0.7) node [right] {\tiny $\vec{y}_1$}; \draw [->] (3.85, 0.71) -- (3.85, 0.7) node [left] {\tiny $\cev{y}_2$}; \draw [->] (4.15, 0.7) -- (4.15, 0.71) node [right] {\tiny $\vec{y}_2$};  \draw (4.15, -1) -- (4.15, -0.2); \draw (4.15, 0.2)-- (4.15, 1); \draw [dashed, rounded corners=2mm] (3.85, -1) -- (3.25, -1.2) -- (3.6, -1.7) -- (4.4,-1.7) -- (4.75,-1.2) -- (4.15, -1); \draw [dashed, rounded corners = 2mm] (-1.3+5.15, 1) -- (3.25, 1.2) -- (3.6, 1.7) -- (4.4, 1.7) -- (4.75, 1.2) -- (4.15, 1); 

\draw (4.15, -0.2) -- (1.85,-0.2); \draw (2.15, -0.5) -- (3.85, -0.5); \draw (3, -0.7) node {$\vec{e}$}; \draw [->] (3, -0.5) -- (3.01, -0.5);
\draw (4.15, 0.2) -- (1.85,0.2); \draw (2.15, 0.5) -- (3.85, 0.5); \draw [->] (3.02, 0.5) -- (3.01, 0.5); \draw (3, 0.73) node {$\vec{f}$};
\draw [dotted] (1.85, -0.2) -- (1.85, 0.2); \draw [dotted] (4.15, -0.2) -- (4.15, 0.2); \draw [dotted] (2.15, -0.5) -- (2.15, 0.5); \draw [dotted] (3.85, -0.5) -- (3.85, 0.5);

\draw (3.05, -2.2) node {$\left( \Gamma_1, x \right)\#\left( \Gamma_2, y \right)$};
\end{tikzpicture}
\end{center}
\caption{Local picture of \emph{Join} (Case 1)}\label{join}
\end{figure}
Therefore, in this case, the number of boundary components in $\Gamma$ is given by $\lvert \partial\Gamma \rvert = b_1+b_2.$
\end{case}
\begin{case}
In this case, we consider that $\vec{x}\in \partial_1, \cev{x}\in \partial_2$ and $\vec{y}\in \zeta_1, \cev{y}\in \zeta_2$, i.e., the directed edges $\vec{x}, \cev{x}, \vec{y}, \cev{y}$ are in the different boundary components. Therefore, there are directed paths $P, Q, R, S$ as in Case 1, such that 
\begin{align*}
\partial_1 = \vec{x} P=\vec{x}_1 \vec{x}_2P 
,\;\;\; \partial_2 = \cev{x}Q= \cev{x}_2 \cev{x}_1 Q,\;\;\; \zeta_1 = \vec{y} R=\vec{y}_1\vec{y}_2R,\;\;\; \zeta_2 = \cev{y} S=\cev{y}_2\cev{y}_1 S.
\end{align*}
\end{case}
The set of boundary components of $\Gamma$ is given by 
\begin{align*}
\partial\Gamma = \left\lbrace \partial_3, \dots, \partial_{b_1} \right\rbrace \cup \left\lbrace \zeta_3, \dots, \zeta_{b_2} \right\rbrace \cup \left\lbrace \eta_1, \eta_2 \right\rbrace,
\end{align*}
where $\eta_1 = \left( \vec{x}_1\cev{y}_1 \right) S \left( \cev{y}_2 \vec{x}_2 \right) P = \vec{e}\;S\; \vec{f}\;P$ and $\eta_2 = \left( \vec{y}_1\cev{x}_1 \right) Q \left( \cev{x}_2 \vec{y}_2 \right) R = \cev{e}\;Q\; \cev{f}\;R.$ Therefore, we have $\lvert \partial\Gamma \rvert = b_1+b_2-2$ and the proposition holds. 

\begin{case}
In this case, we consider $\vec{x}, \cev{x}$ are in the different boundary component of $\Gamma_1$ and $\vec{y}, \cev{y}$ are in the same boundary component of $\Gamma_2$. Therefore, assume that $\vec{x}\in \partial_1, \cev{x}\in \partial_2$ and $\vec{y}, \cev{y}\in \zeta_1$. As before, we have 
\begin{align*}
\partial_1 = \vec{x} P=\vec{x}_1 \vec{x}_2P 
,\;\;\; \partial_2 = \cev{x}Q= \cev{x}_2 \cev{x}_1 Q,\;\;\; \zeta_1 = R \vec{y} S \cev{y}= R \vec{y}_1\vec{y}_2S\cev{y}_2\cev{y}_1.
\end{align*}
The set of boundary components of $\Gamma$ is given by 
\begin{align*}
\partial\Gamma = \left\lbrace \partial_3, \dots, \partial_{b_1} \right\rbrace \cup \left\lbrace \zeta_2, \dots, \zeta_{b_2} \right\rbrace \cup \left\lbrace \eta \right\rbrace,
\end{align*}
where $\eta= \left( \vec{x}_1 \cev{y}_1\right) S \left( \vec{y}_1 \cev{x}_1\right) Q \left( \cev{x}_2 \vec{y}_2\right) R \left( \cev{y}_2 \vec{x}_2\right) P = \vec{e}S\cev{e} Q\cev{f}R\vec{f}P.$ Therefore, we have $\lvert \partial\Gamma \rvert =b_1+b_2-2.$ 
\end{case}
\begin{case}
In this case, consider that $\vec{x}, \cev{x}\in \partial_1$ and $\vec{y} \in \zeta_1, \cev{y}\in \zeta_2$. This case is the same as Case 3, after interchanging the role of $\Gamma_1$ and $\Gamma_2$. 
\end{case}
Thus the proof of (1) is complete.

\noindent (2) It is easy to see the standard cycle containing $x$ in $\Gamma_1$ and the standard cycle containing $y$ in $\Gamma_2$ join together in $\Gamma$ and the rest of the cycles in $\Gamma_1$ and $\Gamma_2$ remain unchanged in $\Gamma$. Thus the number of standard cycles in $\Gamma$ is $s_1+s_2-1.$

\noindent (3) Now, we calculate the genus of $\Gamma$. The number of vertices in $\Gamma_i$ is $\lvert V_i \rvert$ and edges $2\lvert V_i \rvert$, for each $i=1, 2$. The number of vertices and edges in $\Gamma$ are $\lvert V_1 \rvert + \lvert V_2 \rvert$ and $2\left( \lvert V_1 \rvert + \lvert V_2 \rvert \right).$ By the given condition and using Euler's equation, we have $b_i-\lvert V_i \rvert =2-2g_i$, $i=1,2$ which implies that $b_1+b_2-\left( \lvert V_1 \rvert + \lvert V_2 \rvert \right)= 4-2\left(g_1+g_2\right)$. If $b$ is the number of boundary components of $\Gamma$, then genus $g$ of $\Gamma$ is given by, $$b- \left( \lvert V_1 \rvert + \lvert V_2 \rvert \right) = 2-2g.$$

If $\vec{x}, \cev{x} \in \partial$ and $\vec{y}, \cev{y}\in \eta$ for some $\partial\in \partial\Gamma_1, \eta\in \partial\Gamma_2$, then $b=b_1+b_2.$ In this case, we have $2-2g =4-2\left(g_1+g_2\right)\Rightarrow g=g_1+g_2-1.$ 
In the remaining cases, $b=b_1+b_2-2$. Thus we have, $2-2g = 2-2\left(g_1+g_2\right)\Rightarrow g=g_1+g_2.$
\end{proof}

\begin{corollary}\label{cor:join}
Let $\Gamma_1,\Gamma_2,\text{ and }\Gamma$ be as in Proposition~\ref{prop:join}. Then, the length of each boundary component in $\partial\Gamma \setminus \left( \partial\Gamma_1 \cup \partial \Gamma_2 \right)$ is strictly greater than $2$.
\end{corollary}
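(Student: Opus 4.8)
The plan is to read the new boundary components directly off the proof of Proposition~\ref{prop:join} and bound their lengths, where the \emph{length} of a boundary component means the number of directed edges in the corresponding cycle of $\sigma_1*\sigma_0^{-1}$. Every cycle of $\Gamma$ other than the amalgamated cycles $\eta$ (or $\eta_1,\eta_2$) produced in Cases 1--4 coincides with a cycle of $\Gamma_1$ or of $\Gamma_2$; hence $\partial\Gamma\setminus(\partial\Gamma_1\cup\partial\Gamma_2)$ consists exactly of these $\eta$'s, and it suffices to bound their lengths. I will use throughout that the new edges $e,f$ are honest single edges of $\Gamma$ and that every edge occurring in one of the boundary segments $P,Q,R,S$ persists unchanged in $\Gamma$, so the lengths of these segments add to the length of each $\eta$.

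The first tool is a degree lemma: in any fat graph all of whose vertices have degree at least $2$, the two directed edges $\vec z,\cev z$ of an edge are never consecutive in a boundary cycle. Indeed, $\sigma_1*\sigma_0^{-1}(\vec z)=\cev z$ would force $\sigma_0^{-1}(\vec z)=\sigma_1(\cev z)=\vec z$, making $\vec z$ a fixed point of $\sigma_0$, i.e.\ a univalent vertex; the same computation rules out $\sigma_1*\sigma_0^{-1}(\cev z)=\vec z$. In Case 1, where $\partial_1=P\vec x Q\cev x$ and $\zeta_1=R\vec y S\cev y$, this lemma shows that none of $P,Q,R,S$ is empty, so $\eta_1=P\,\vec e\,R\,\cev e$ and $\eta_2=Q\,\cev f\,S\,\vec f$ each have length at least $3$. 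In Cases 3 and 4 the single new cycle $\eta$ already lists the four distinct directed edges $\vec e,\cev e,\vec f,\cev f$, so its length is at least $4$. Thus these three cases are immediate.

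The delicate case, and the main obstacle, is Case 2, in which $\eta_1=\vec e\,S\,\vec f\,P$ and $\eta_2=\cev e\,Q\,\cev f\,R$ contain only two of the new directed edges each; if the segments $P,Q,R,S$ could all be empty these would have length exactly $2$. Here the degree lemma does not suffice, since emptiness of $P$ means that $\partial_1=(\vec x)$ is a \emph{monogon}, i.e.\ $\sigma_1*\sigma_0^{-1}(\vec x)=\vec x$; this forces $\sigma_0(\cev x)=\vec x$, so $x$ is a loop, and such a configuration can occur in a general decorated fat graph. The point I would make is that the fat graphs to which the corollary is applied carry no monogon boundary component, since they arise from fillings of $F_g$ by essential simple closed curves in minimal position, for which every complementary region has at least three sides. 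Granting that every boundary component of $\Gamma_1$ and $\Gamma_2$ has length at least $2$, each of $\partial_1=\vec x P$, $\partial_2=\cev x Q$, $\zeta_1=\vec y R$, $\zeta_2=\cev y S$ has a nonempty tail, so $P,Q,R,S$ are nonempty and $\eta_1,\eta_2$ again have length at least $3$. Combining the four cases shows that every boundary component in $\partial\Gamma\setminus(\partial\Gamma_1\cup\partial\Gamma_2)$ has length strictly greater than $2$.
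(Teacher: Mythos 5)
The paper offers no proof of this corollary --- it is meant to be read off from the explicit descriptions of the new boundary cycles $\eta$, $\eta_1$, $\eta_2$ in the four cases of the proof of Proposition~\ref{prop:join} --- and your argument is exactly that reading, carried out carefully. Your degree lemma (that $\vec z$ and $\cev z$ cannot be consecutive in a cycle of $\sigma_1*\sigma_0^{-1}$ unless some vertex has degree $1$) is the right tool for Case 1, and Cases 3 and 4 are indeed immediate since the single new cycle contains all four directed edges $\vec e,\cev e,\vec f,\cev f$.

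Your treatment of Case 2 is the genuinely valuable part, and you are right that it is a real issue rather than a pedantic one: for a general decorated fat graph a monogon boundary component $(\vec x)$ can occur (take a single degree-$4$ vertex with two loops $x,a$ and cyclic order $(\cev x,\vec x,\vec a,\cev a)$; then $(\vec x)$ and $(\cev a)$ are monogons), and joining two such graphs along suitable edges in the configuration of Case 2 produces a new boundary cycle $\vec e\,S\,\vec f\,P$ of length exactly $2$. So the corollary as literally stated is false without excluding monogons, and your added hypothesis --- that every boundary component of $\Gamma_1$ and $\Gamma_2$ has length at least $2$ --- is both necessary and satisfied in every instance where the paper applies the join (there $\Gamma_2$ is always the one-vertex torus graph $\Gamma_1$, whose unique boundary component has length $4$, and the edge of the first factor is always chosen with both orientations in a single boundary component, so only Case 1 ever arises). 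In short: your proof is correct under the hypothesis you state, it follows the route the paper intends, and it additionally pinpoints the precise extra assumption the corollary needs in Case 2.
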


%%%%%%%%%%%%%%%%%%%%%%%%  Connected Sum %%%%%%%%%%%%%%%%%%%%%%%%%%%%%%%%%%%%%%%%%%%%

\subsection{Connected sum of fat graphs}\label{cs}
Motivated by the construction used in the proof of Theorem 1.4 in~\cite{BS}, we define a binary operation on fat graphs called \emph{connected sum}. Let $\Gamma_i, i=1,2$, be two $4$-regular fat graphs. Let $w=\{ \vec{e}_1,\vec{e}_2,\vec{e}_3,\vec{e}_4 \}$ and $u =\{ \vec{f}_1,\vec{f}_2,\vec{f}_3,\vec{f}_4 \}$ be two vertices of $\Gamma_1$ and $\Gamma_2$ respectively. Let $\sigma_w=( \vec{e}_1,\vec{e}_2,\vec{e}_3,\vec{e}_4),\sigma_u=( \vec{f}_1,\vec{f}_2,\vec{f}_3,\vec{f}_4)$ be the cyclic orders at $w$ and $u$ respectively. We define the new graph $\Gamma_1\#_{(w,u)} \Gamma_2 = \left( E, \sim, \sigma_1, \sigma_0\right)$, called the connected sum of $\Gamma_1$ and $\Gamma_2$ at the vertices $w$ and $u$, which is described as follows:
\begin{enumerate}
\item The set of directed edges is given by, $$E = \left( E_1 \cup E_2 \setminus \left\lbrace \vec{e}_i, \cev{e}_i, \vec{f}_i, \cev{f}_i|\,1\leq i\leq 4 \right\rbrace \right) \cup \left\lbrace \vec{g}_i, \cev{g}_i|\,1\leq i\leq 4 \right\rbrace,$$ where $E_1$ and $E_2$ are the set of directed edges of $\Gamma_1$ and $\Gamma_2$ respectively and the $\vec{g}_i,\cev{g}_i$ are defined by (see Figure~\ref{sum} for a local picture),  $$\vec{g_i} = \cev{e}_i*\vec{f}_{5-i} \text{ and } \cev{g}_i = \cev{f}_{5-i}*\vec{e}_i\text{ for }1\leq i\leq 4.$$
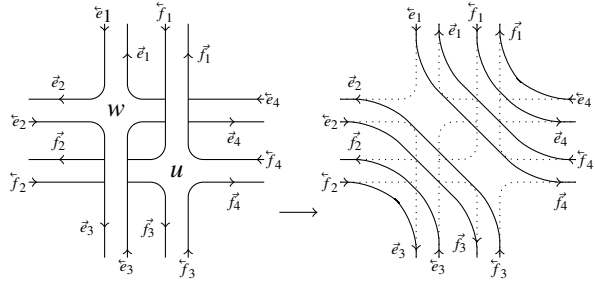
\begin{figure}[htbp]
\begin{center}
\begin{tikzpicture}
\draw [dotted](0,1) -- (1, 1); \draw [dotted] (0,1.3) -- (1, 1.3); \draw [dotted,rounded corners=2mm](1.3,1) -- (1.8, 1) -- (1.8,0); \draw [dotted,rounded corners=2mm](3.1,1) -- (2.1, 1) -- (2.1,0); \draw [dotted,rounded corners=2mm](3.1,1.3) -- (2.1, 1.3) -- (2.1, 3.1); \draw [dotted,rounded corners=2mm](1.3,1.3) -- (1.8, 1.3) -- (1.8,3.1);

\draw [dotted,rounded corners=2mm](1,0) -- (1, 1.8) -- (0, 1.8); \draw [dotted,rounded corners=2mm](1.3,0) -- (1.3, 1.8) -- (1.8, 1.8); \draw [dotted,rounded corners=2mm](0,2.1) -- (1, 2.1) -- (1, 3.1); \draw [dotted,rounded corners=2mm](1.3,3.1) -- (1.3, 2.1) -- (1.8, 2.1); \draw [dotted](2.1,1.8) -- (3.1, 1.8);\draw [dotted](2.1,2.1) -- (3.1, 2.1);
%\draw (1.95, 1.15) node {$u$}; \draw (1.15, 1.95) node {$w$};
\draw [->] (1.3, 3) -- (1.3, 3.01)node [right] {\tiny $\vec{e}_1$}; \draw [->] (1, 3.01) -- (1, 3)node [above] {\tiny $\cev{e}_1$};\draw [->] (2.1, 3) -- (2.1, 3.01)node [right] {\tiny $\vec{f}_1$};\draw [->] (1.8, 3.01) -- (1.8, 3)node [above] {\tiny $\cev{f}_1$};
\draw [->] (0.21, 2.1) -- (0.2, 2.1)node [above] {\tiny $\vec{e}_2$}; \draw [->] (0.1, 1.8) -- (0.11, 1.8)node [left] {\tiny $\cev{e}_2$}; \draw [->] (0.21, 1.3) -- (0.2, 1.3)node [above] {\tiny $\vec{f}_2$}; \draw [->] (0.1, 1) -- (0.11, 1)node [left] {\tiny $\cev{f}_2$}; \draw [->] (1, 0.11) -- (1, 0.1)node [left] {\tiny $\vec{e}_3$}; \draw [->] (1.3, 0.1) -- (1.3, 0.11)node [below] {\tiny $\cev{e}_3$}; \draw [->] (1.8, 0.21) -- (1.8, 0.2)node [left] {\tiny $\vec{f}_3$}; \draw [->] (2.1, 0.1) -- (2.1, 0.11)node [below] {\tiny $\cev{f}_3$};\draw [->] (2.9, 1) -- (2.91, 1)node [below] {\tiny $\vec{f}_4$}; \draw [->] (3.01, 1.3) -- (3, 1.3)node [right] {\tiny $\cev{f}_4$};

\draw [->] (2.9, 1.8) -- (2.91, 1.8)node [below] {\tiny $\vec{e}_4$}; \draw [->] (3.01, 2.1) -- (3, 2.1)node [right] {\tiny $\cev{e}_4$};

\draw [rounded corners=4mm] (0, 1) -- (0.5, 1) -- (1,0.5) -- (1, 0); \draw [rounded corners=4mm] (0, 1.3) -- (0.6, 1.3) -- (1.3,0.6) -- (1.3, 0);

\draw [rounded corners=4mm] (1.8, 3.1) -- (1.8, 2.5) -- (2.5,1.8) -- (3.1, 1.8); \draw [rounded corners=4mm] (2.1, 3.1) -- (2.1, 2.6) -- (2.6,2.1) -- (3.1, 2.1);

\draw [rounded corners=4mm] (1, 3.1) -- (1, 2.5) -- (2.5,1) -- (3.1, 1); \draw [rounded corners=4mm] (1.3, 3.1) -- (1.3, 2.6) -- (2.6,1.3) -- (3.1, 1.3);

\draw [rounded corners=4mm] (0, 1.8) -- (0.5, 1.8) -- (1.8,0.5) -- (1.8, 0); \draw [rounded corners=4mm] (0, 2.1) -- (0.6, 2.1) -- (2.1,0.6) -- (2.1,0);
%%%%%%%%%%%%%%%%%%%%%%%%%%%%%%%%%%%%%%%%%%%%%%%
\draw [->] (-0.8, 0.6) -- (-0.3,0.6);
\draw (-4.1,1) -- (1-4.1, 1); \draw (-4.1,1.3) -- (1-4.1, 1.3); \draw [rounded corners=2mm](1.3-4.1,1) -- (1.8-4.1, 1) -- (1.8-4.1,0); \draw [rounded corners=2mm](3.1-4.1,1) -- (2.1-4.1, 1) -- (2.1-4.1,0); \draw [rounded corners=2mm](3.1-4.1,1.3) -- (2.1-4.1, 1.3) -- (2.1-4.1, 3.1); \draw [rounded corners=2mm](1.3-4.1,1.3) -- (1.8-4.1, 1.3) -- (1.8-4.1,3.1);

\draw [rounded corners=2mm](1-4.1,0) -- (1-4.1, 1.8) -- (-4.1, 1.8); \draw [rounded corners=2mm](1.3-4.1,0) -- (1.3-4.1, 1.8) -- (1.8-4.1, 1.8); \draw [rounded corners=2mm](-4.1,2.1) -- (1-4.1, 2.1) -- (1-4.1, 3.1); \draw [rounded corners=2mm](1.3-4.1,3.1) -- (1.3-4.1, 2.1) -- (1.8-4.1, 2.1); \draw (2.1-4.1,1.8) -- (3.1-4.1, 1.8);\draw (2.1-4.1,2.1) -- (3.1-4.1, 2.1);
\draw (1.95-4.1, 1.15) node {$u$}; \draw (1.15-4.1, 1.95) node {$w$};
\draw [->] (1.3-4.1, 2.7) -- (1.3-4.1, 2.71)node [right] {\tiny $\vec{e}_1$}; \draw [->] (1-4.1, 3.01) -- (1-4.1, 3)node [above] {$\tiny \cev{e}_1$};\draw [->] (2.1-4.1, 2.7) -- (2.1-4.1, 2.71)node [right] {\tiny $\vec{f}_1$};\draw [->] (1.8-4.1, 3.01) -- (1.8-4.1, 3)node [above] {\tiny $\cev{f}_1$};
\draw [->] (0.41-4.1, 2.1) -- (0.4-4.1, 2.1)node [above] {\tiny $\vec{e}_2$}; \draw [->] (0.1-4.1, 1.8) -- (0.11-4.1, 1.8)node [left] {\tiny $\cev{e}_2$}; \draw [->] (0.41-4.1, 1.3) -- (0.4-4.1, 1.3)node [above] {\tiny $\vec{f}_2$}; \draw [->] (0.1-4.1, 1) -- (0.11-4.1, 1)node [left] {\tiny $\cev{f}_2$}; \draw [->] (1-4.1, 0.41) -- (1-4.1, 0.4)node [left] {\tiny $\vec{e}_3$}; \draw [->] (1.3-4.1, 0.1) -- (1.3-4.1, 0.11)node [below] {\tiny $\cev{e}_3$}; \draw [->] (1.8-4.1, 0.41) -- (1.8-4.1, 0.4)node [left] {\tiny $\vec{f}_3$}; \draw [->] (2.1-4.1, 0.1) -- (2.1-4.1, 0.11)node [below] {\tiny $\cev{f}_3$};\draw [->] (2.7-4.1, 1) -- (2.71-4.1, 1)node [below] {\tiny $\vec{f}_4$}; \draw [->] (3.01-4.1, 1.3) -- (3-4.1, 1.3)node [right] {\tiny $\cev{f}_4$};

\draw [->] (2.7-4.1, 1.8) -- (2.71-4.1, 1.8)node [below] {\tiny $\vec{e}_4$}; \draw [->] (3.01-4.1, 2.1) -- (3-4.1, 2.1)node [right] {\tiny $\cev{e}_4$};
\end{tikzpicture}
\end{center}
\caption{Connected sum (local picture)}\label{sum}
\end{figure}
\item The fixed point free involution $\sigma_1$ is defined as usual $\sigma\left( \vec{a} \right) = \cev{a}$ for all $\vec{a}\in E$.

\item Let $v = \left\lbrace \vec{h}_i: i=1, \dots, 4 \right\rbrace\in V_1\cup V_2 \setminus \{w,u\}$, where $V_1$ and $V_2$ are the set of vertices of $\Gamma_1$ and $\Gamma_2$ respectively. We define $v'=\left\lbrace {\vec{h}_i}^{\,'} : i=1,\dots, 4\right\rbrace$, where
\[
{\vec{h}_i}^{\,'}=
\begin{cases}
\vec{h}_i& \text{ if } \vec{h}_i\in E_1 \cup E_2 \setminus \left\lbrace  \vec{e}_i, \cev{e}_i, \vec{f}_i, \cev{f}_i|\,1\leq i\leq 4 \right\rbrace \text{ and }\\
\vec{g}_i \text{ and }\cev{g}_{5-i} & \text{if } h_i=\cev{e}_i \text{ and } \cev{f}_i \text{ respectively.}
\end{cases}
\]
Then the set equivalence classes of the relation $\sim$ is $V= \left\lbrace v': v\in V_1\cup V_2\setminus \{w,u\}\right\rbrace.$
\item If $\sigma_v = \left( \vec{h}_1, \vec{h}_2, \vec{h}_3, \vec{h}_4\right)$ is the cyclic order at the vertex $v\in V_1\cup V_2\setminus\{w,u\}$, then we define $\sigma_{v'} = \left( {\vec{h}_1}^{\,'}, {\vec{h}_2}^{\,'}, {\vec{h}_3}^{\,'}, {\vec{h}_4}^{\,'}\right)$ is the cyclic order at the vertex $v'\in V$. Thus $\sigma_0=\prod\limits_{v'\in V}\sigma_{v'}.$
\end{enumerate}

For a fat graph $\Gamma=\left( E, \sim, \sigma_1, \sigma_0\right)$ and $\eta\in \partial\Gamma$, we define $\chi_{\eta}:E\to \mathbb{R}$  as follows:
$$
\chi_{\eta}(\vec{e})=
\begin{cases}
1& \text{if }\vec{e}\in \eta,\text{ and}\\
0 & \text{otherwise.}
\end{cases}
$$
\begin{prop}\label{connected_sum}
Let $\Gamma_i, i=1, 2,$ be two $4$-regular fat graphs of genus $g_i$ with $b_i$ boundary components and $s_i$ standard cycles. Let $w=\{ \vec{e}_1,\vec{e}_2,\vec{e}_3,\vec{e}_4 \}$ and $u =\{ \vec{f}_1,\vec{f}_2,\vec{f}_3,\vec{f}_4 \}$ be two vertices of $\Gamma_1$ and $\Gamma_2$ respectively.  Suppose that for $1\leq i\leq 4,\,\vec{e}_i,\cev{e}_i\in \partial$ for some $\partial\in \partial \Gamma_1$. Then the connected sum $\Gamma =  \Gamma_1\#_{(w,u)} \Gamma_2 $ of $\Gamma_1$ and $\Gamma_2$ at $w, u$ has the following properties:
\begin{enumerate}
\item The number of boundary components in $\Gamma$ is given by 
\[
\lvert \partial\Gamma \rvert=
\begin{cases}
b_1+b_2+2 & \text{ if } \sum\limits_{i=1}^4\chi_{\eta}(\vec{f}_i)\chi_{\eta}(\cev{f}_i)=4\text{ for some }\eta\in \partial\Gamma_2,\\
b_1+b_2-4 & \text{ if } \sum\limits_{\substack{\eta\in \partial\Gamma_2\\1\leq i\leq 4}}\chi_{\eta}(\vec{f}_i)\chi_{\eta}(\cev{f}_{i})=0,\text{ and } \sum\limits_{\substack{\eta\in \partial\Gamma_2\\1\leq i\leq 4}}\chi_{\eta}(\cev{f}_i)\chi_{\eta}(\vec{f}_{i+1})=4,\\
b_1+b_2 & \text{ if }\sum\limits_{\substack{\eta\in \partial\Gamma_2\\1\leq i\leq 4}}\chi_{\eta}(\vec{f}_i)\chi_{\eta}(\cev{f}_i)=2\text{ and}\\
b_1+b_2-2 & \text{ otherwise. } 
\end{cases}
\]

\item The number of standard cycles in $\Gamma$ is $s_1+s_2-2$.

\item The genus of $\Gamma$ is given by
\[
g(\Gamma)=
\begin{cases}
g_1+g_2-3 & \text{ if } \sum\limits_{i=1}^4\chi_{\eta}(\vec{f}_i)\chi_{\eta}(\cev{f}_i)=4\text{ for some }\eta\in \partial\Gamma_2,\\
g_1+g_2 & \text{ if } \sum\limits_{\substack{\eta\in \partial\Gamma_2\\1\leq i\leq 4}}\chi_{\eta}(\vec{f}_i)\chi_{\eta}(\cev{f}_{i})=0,\text{ and } \sum\limits_{\substack{\eta\in \partial\Gamma_2\\1\leq i\leq 4}}\chi_{\eta}(\cev{f}_i)\chi_{\eta}(\vec{f}_{i+1})=4,\\
g_1+g_2-2 & \text{ if }\sum\limits_{\substack{\eta\in \partial\Gamma_2\\1\leq i\leq 4}}\chi_{\eta}(\vec{f}_i)\chi_{\eta}(\cev{f}_i)=2\text{ and}\\
g_1+g_2-1 & \text{ otherwise. } 
\end{cases}
\]
\end{enumerate}
\end{prop}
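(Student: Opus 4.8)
The plan is to prove the three parts in the order (1), (3), (2): part (3) follows formally from part (1) via Euler's formula, exactly as in the proof of Proposition~\ref{prop:join}, so only (1) and (2) require genuine combinatorial work. For (1) I would recall that $\partial\Gamma$ is the set of cycles of $\sigma_1 * \sigma_0^{-1}$ and trace how these cycles are reorganised by the splicing. The hypothesis that $\vec e_i,\cev e_i\in\partial$ for all $i$ and a single $\partial\in\partial\Gamma_1$ means that this one boundary component $\partial$ runs through all four corners of the vertex $w$; writing $\partial$ as a concatenation of its four corner arcs $P_1,P_2,P_3,P_4$ at $w$ (in the cyclic order dictated by $\sigma_w$) isolates exactly the local data the operation touches, while every other boundary component of $\Gamma_1$ is left unchanged. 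The four cases in the statement are then read off from how the boundary components of $\Gamma_2$ meet the corners of $u$: the quantity $\sum_i\chi_{\eta}(\vec f_i)\chi_{\eta}(\cev f_i)$ counts the edges $f_i$ whose two sides lie on a common component $\eta$, while the secondary sum $\sum\chi_{\eta}(\cev f_i)\chi_{\eta}(\vec f_{i+1})$ (indices modulo $4$) records which corners of $u$ are traversed by a single component. In each case I would substitute the explicit corner decompositions into the new edges $\vec g_i=\cev e_i*\vec f_{5-i}$, re-read the cycles of $\sigma_1 * \sigma_0^{-1}$ on $E$, and count. I expect the main obstacle to be purely organisational: matching each of the four indicator patterns to the correct recombination of boundary arcs without omitting a configuration, which is where most of the care is needed.

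Given (1), part (3) is immediate. Since both graphs are $4$-regular we have $|E_i|=2|V_i|$, and the connected sum deletes $w,u$ together with their eight incident edges and inserts the four edges $g_1,\dots,g_4$, so $|V|=|V_1|+|V_2|-2$ and $|E|=2|V|$; thus $\Gamma$ is again $4$-regular. Applying the Euler relation $b_i-|V_i|=2-2g_i$ to each factor and $|\partial\Gamma|-|V|=2-2g(\Gamma)$ to $\Gamma$, and eliminating $|V_1|,|V_2|$, yields
\[
g(\Gamma)=g_1+g_2+\tfrac{1}{2}\bigl(b_1+b_2-|\partial\Gamma|\bigr)-2 .
\]
Feeding in the four values of $|\partial\Gamma|$ from (1), namely $b_1+b_2+2,\ b_1+b_2-4,\ b_1+b_2,\ b_1+b_2-2$, returns exactly the four asserted genera $g_1+g_2-3,\ g_1+g_2,\ g_1+g_2-2,\ g_1+g_2-1$, so no separate argument for (3) is needed.

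For (2) I would argue locally, as in part (2) of Proposition~\ref{prop:join}. A standard cycle enters a vertex along one edge and leaves along the opposite edge in the cyclic order, so $w$ lies on the two channels $\{e_1,e_3\}$ and $\{e_2,e_4\}$, and $u$ on $\{f_1,f_3\}$ and $\{f_2,f_4\}$; every standard cycle avoiding both $w$ and $u$ survives unchanged. The prescription $\vec g_i=\cev e_i*\vec f_{5-i}$ reconnects the severed channels across the junction, pairing the $\{e_1,e_3\}$-channel with the $\{f_2,f_4\}$-channel and the $\{e_2,e_4\}$-channel with the $\{f_1,f_3\}$-channel; tracing these two splices shows that the standard cycles through $w$ together with those through $u$ amalgamate so that the total count drops by exactly two, giving $s_1+s_2-2$. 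The one point requiring attention is the degenerate possibility that a single standard cycle already uses both channels at $w$ (or at $u$), which would spoil the bookkeeping; here I would check that the boundary hypothesis $\vec e_i,\cev e_i\in\partial$ at $w$ prevents this from happening at $w$, thereby keeping the count equal to $s_1+s_2-2$, and I expect this verification to be the only subtle step in (2).
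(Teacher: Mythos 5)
Your proposal follows the paper's own proof in all essentials. Part (1) is done there exactly as you describe: the distinguished boundary component of $\Gamma_1$ is written as a concatenation of the four corner arcs at $w$ (the paper takes $\partial_1=\cev{e}_1\vec{e}_2P\cev{e}_2\vec{e}_3Q\cev{e}_3\vec{e}_4R\cev{e}_4\vec{e}_1S$), the configurations on the $\Gamma_2$ side are sorted by the indicator sums, and the cycles of $\sigma_1*\sigma_0^{-1}$ are re-read after the splice; like you, the paper carries out only one of the four cases explicitly and declares the rest similar. Part (3) is obtained there by precisely the Euler-characteristic elimination you give, and part (2) by the same local channel analysis: $T_1=e_1T_1'e_3$, $T_2=e_2T_2'e_4$, $U_1=f_1U_1'f_3$, $U_2=f_2U_2'f_4$ recombine into $V_1=g_1U_2'g_3T_1'$ and $V_2=g_2U_1'g_4T_2'$, dropping the count by two.

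The one point where you go beyond the paper is the degeneracy discussion at the end of (2), and that is also the step you should not leave as an expectation. The paper writes ``without loss of generality'' for the configuration in which the two channels at $w$ (and at $u$) lie on distinct standard cycles; for an arbitrary $4$-regular fat graph this is a genuine assumption, not a normalization, and if it fails at both $w$ and $u$ the count $s_1+s_2-2$ can actually break down. Your proposed repair --- that the hypothesis $\vec{e}_i,\cev{e}_i\in\partial$ forces non-degeneracy at $w$ --- is plausible (in the one-vertex examples a single standard cycle through both channels does force several boundary faces), but you give no argument for it, and nothing stated earlier in the paper implies it; moreover your fix addresses only $w$, so you would still need to check by a separate trace of the arcs that degeneracy at $u$ alone does not change the count. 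The cleanest way to discharge all of this, and the one the paper implicitly relies on, is that in every application the fat graphs come from systems of simple closed curves in minimal position, so each vertex is a transverse intersection of two distinct curves and the two channels automatically belong to distinct standard cycles. As it stands, your sketch and the paper's proof share the same unexamined spot; you at least name it, but the justification you propose for closing it is not yet a proof.
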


\begin{proof}
Let us assume that $\partial\Gamma_1=\left\lbrace \partial_1, \dots, \partial_{b_1} \right\rbrace$ and $\partial\Gamma_2=\left\lbrace \zeta_1, \dots, \zeta_{b_2} \right\rbrace$. Without loss of generality, we take $\partial_1 =\cev{e}_1\vec{e}_2P\cev{e}_2\vec{e}_3Q\cev{e}_3\vec{e}_4R\cev{e}_4\vec{e}_1S$ for some directed paths $P,Q,R,S$ on the boundary of $\Gamma_1$. We begin by considering the case $$ \sum\limits_{\substack{\eta\in \partial\Gamma_2\\1\leq i\leq 4}}\chi_{\eta}(\vec{f}_i)\chi_{\eta}(\cev{f}_{i})=0,\text{ and } \sum\limits_{\substack{\eta\in \partial\Gamma_2\\1\leq i\leq 4}}\chi_{\eta}(\cev{f}_i)\chi_{\eta}(\vec{f}_{i+1})=4,$$ the arguments in the remaining cases are similar.

In this case, we have assumed that the directed paths $\cev{f}_1\vec{f}_2,\cev{f}_2\vec{f}_3,\cev{f}_3\vec{f}_4$ and $\cev{f}_4\vec{f}_1$ are in different boundary components of $\Gamma_2$. Therefore, we assume that $\cev{f}_1\vec{f}_2\in \zeta_1,\cev{f}_2\vec{f}_3\in \zeta_2,\cev{f}_3\vec{f}_4\in \zeta_3,\text{ and }\cev{f}_4\vec{f}_1\in \zeta_4$. Thus there are directed paths $P', Q',R',\text{ and } S'$ on the boundary of $\Gamma_2$  respectively, such that 
\begin{align*}
\zeta_1 = \cev{f}_1\vec{f}_2P',\zeta_2 = \cev{f}_2\vec{f}_3Q',\zeta_3 = \cev{f}_3\vec{f}_1 R'\text{ and } \zeta_4 = \cev{f}_4\vec{f}_1S'.
\end{align*}
The set of boundary components of the  fat graph $\Gamma =  \Gamma_1\#_{(w,u)} \Gamma_2 $ is given by 
\begin{align*}
\partial\Gamma = \left\lbrace \partial_2, \dots, \partial_{b_1} \right\rbrace \cup \left\lbrace \zeta_5, \dots, \zeta_{b_2} \right\rbrace \cup \left\lbrace \eta \right\rbrace,
\end{align*}
where
\begin{eqnarray*}
\eta&=&(\cev{e}_1*\vec{f}_4)R'(\cev{f}_3*\vec{e}_2)P(\cev{e}_2*\vec{f}_3)Q'(\cev{f}_2*\vec{e}_3)Q(\cev{e}_3*\vec{f}_2)P'(\cev{f}_1*\vec{e}_4)R(\cev{e}_4*\vec{f}_1)
S'(\cev{f}_4*\vec{e}_1)S\\&=&\cev{g}_1R'\cev{g}_2P\vec{g}_2Q'\cev{g}_3Q\vec{g}_3P'\cev{g}_4R\vec{g}_4
S'\cev{g}_1S .
\end{eqnarray*}

Therefore, we have $\lvert \partial\Gamma \rvert=b_1+b_2-4.$ 

To prove (2),  assume that $S(\Gamma_1)=\left\lbrace T_1, \dots, T_{s_1} \right\rbrace, S(\Gamma_2)=\left\lbrace U_1, \dots, U_{s_2} \right\rbrace$ be the set of standard cycles of $\Gamma_1$ and $\Gamma_2$ respectively. Without loss of generality assume that there exist paths $T_i'\in T_i,U_i'\in U_i$, for $i=1,2$ such that $$T_1 = e_1T_1'e_3, T_2=e_2T_2'e_4,U_1=f_1U_1'f_3,\text{ and }U_2=f_2T_2'f_4.$$
The set of standard cycles of $\Gamma$ are given by
$$ S(\Gamma) = \left\lbrace T_3, \dots, T_{s_1} \right\rbrace \cup \left\lbrace U_3, \dots, U_{s_2} \right\rbrace \cup \left\lbrace V_1,V_2 \right\rbrace,
$$ where $ V_1=g_1U_2'g_3T_1',\text{ and }V_2=g_2U_1'g_4T_2',$ which conclude the proof of (2). Finally, a simple Euler characteristic argument coupled with (1) yields (3).
\end{proof}

%%%%%%%%%%%%%%%%%%%%%%  Plumbing  %%%%%%%%%%%%%%%%%%%%%%%%%%%%%%%%%%%%%%%%%%%%%%%%%%%

\subsection{Plumbing of fat graphs}\label{pl}
Let $\Gamma_i, \; i=1,2,$ be two fat graphs and $x=\{\vec{x}, \cev{x}\}$, $y=\{\vec{y}, \cev{y}\}$ be two undirected edges of $\Gamma_1, \Gamma_2$ respectively. The \emph{plumbing} of $\Gamma_1$ and $\Gamma_2$ along the edges $x, y$ is the fat graph $(\Gamma_1\#\Gamma_2)_{(x, y)}=(E', \sim, \sigma_1', \sigma_0')$ defined by the following (for a local picture of a plumbing, we refer to Figure~\ref{plumb}):
\begin{enumerate}
\item We split $x=\{\vec{x}, \cev{x}\}$ into two edges $x_i=\{\vec{x}_i, \cev{x}_i\}, i=1,2$ and $y$ into $y_i=\{\vec{y}_i, \cev{y}_i\}, i=1,2$. Define, $$ E' = E(\Gamma_1)\cup E(\Gamma_2) \cup \{\vec{x}_i, \cev{x}_i, \vec{y}_i, \cev{y}_i| i=1,2\} \setminus \{\vec{x}, \cev{x}, \vec{y}, \cev{y}\}.$$

\item  The involution $\sigma_1'$ is as usual, $\sigma_1'(\vec{e})=\cev{e}$ for all $\vec{e}\in E'$.

\item Now, we define the equivalence classes of  $\sim$, equivalently the set $V'$ of vertices: let $v=\{\vec{e}_1, \vec{e}_2, \dots, \vec{e}_k\}\in V(\Gamma_1)\bigcup V(\Gamma_2)$. Define, $v'=\{{\vec{e}_1}^{\,'}, {\vec{e}_2}^{\,'}, \dots, {\vec{e}_k}^{\,'}\}$, where 
$$
{\vec{e}_i}^{\,'}=
\begin{cases}
\vec{e}_i, & \text{ if } \vec{e}_i\neq \vec{x}, \vec{y}, \cev{x}, \cev{y} \text{ and}\\
\vec{x}_1, \cev{x}_2, \vec{y}_1\text{ or } \cev{y}_2 & \text{ if } \vec{e}_i = \vec{x}, \cev{x}, \vec{y} \text{ or } \cev{y}\text{ respectively. }
\end{cases}
$$
Define $V'=\{v'|\; v\in V(\Gamma_1)\bigcup V(\Gamma_2)\}\cup \{u\},$ where $u=\{\cev{x}_1, \cev{y}_1, \vec{x}_2, \vec{y}_2\}$. 

\item Let $\sigma_v=(\vec{e}_1, \dots, \vec{e}_k)$ be the cyclic order at a vertex $v\in V(\Gamma_1)\bigcup V(\Gamma_2)$. Then we define ${\sigma'}_{v'}=({\vec{e}_1}^{\,'}, \dots, {\vec{e}_k}^{\,'})$ and further, $\sigma_u'=(\cev{x}_1, \cev{y}_1, \vec{x}_2, \vec{y}_2)$. The fat graph structure is given by $\sigma_0'=\prod\limits_{v'\in V'} {\sigma'}_{v'}.$
\end{enumerate}

\begin{dfn}
The graph $(\Gamma_1\#\Gamma_2)_{(x, y)}$ is said to be obtained by plumbing $\Gamma_1$ and $\Gamma_2$ along the edges $x, y$.
\end{dfn}

\begin{prop}\label{plumb:boundary}
Let $\Gamma_i, i=1, 2$ be fat graphs and $x, y$ be two edges of $\Gamma_1, \Gamma_2$ respectively. If $b_i=|\partial\Gamma_i|$, $i=1,2$, then the number of boundary components in $(\Gamma_1\#\Gamma_2)_{(x, y)}$ is given by $$|\partial (\Gamma_1\#\Gamma_2)_{(x, y)}|=
\begin{cases}
b_1+b_2-3, & \text{ if $\vec{x}, \vec{y}, \cev{x}, \cev{y}$ are in the different boundary components and }\\
b_1+b_2-1, & \text{ otherwise. }
\end{cases}$$ 
\end{prop}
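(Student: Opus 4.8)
The plan is to compute directly the cycle structure of the boundary permutation $\sigma_1'*{\sigma_0'}^{-1}$ of $(\Gamma_1\#\Gamma_2)_{(x,y)}$, following the same template as the proofs of Proposition~\ref{prop:join} and Proposition~\ref{connected_sum}. The key observation is that $\sigma_1'$ and $\sigma_0'$ agree with the data of $\Gamma_1$ and $\Gamma_2$ everywhere except at the four half-edges $\cev{x}_1,\cev{y}_1,\vec{x}_2,\vec{y}_2$ that are collected at the new $4$-valent vertex $u$ with cyclic order $\sigma_u'=(\cev{x}_1,\cev{y}_1,\vec{x}_2,\vec{y}_2)$. Hence the only effect of plumbing on the boundary is to reconnect the boundary arcs of $\Gamma_1$ running along $x$ and those of $\Gamma_2$ running along $y$. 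So the first and central step is to read off the local boundary transitions at $u$: a short computation from $\sigma_u'$ shows that the boundary walk entering $u$ along $\vec{x}_1,\vec{y}_1,\cev{x}_2,\cev{y}_2$ leaves $u$ along $\cev{y}_1,\vec{x}_2,\vec{y}_2,\cev{x}_1$ respectively. Thus $u$ \emph{interleaves} the two $x$-strands with the two $y$-strands — precisely the geometric crossing of the two thickened bands — and this single rule drives the entire count.

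Next I would fix notation for the boundary components meeting $x$ and $y$, writing them as concatenations of boundary arcs and the marked edges, exactly as is done in Case~1 of Proposition~\ref{prop:join}. When $\vec{x},\cev{x}$ lie in a common component of $\Gamma_1$ I write it as $\vec{x}\,P\,\cev{x}\,Q$, and when they lie in distinct components I write these as $\vec{x}\,P$ and $\cev{x}\,Q$; similarly for $\vec{y},\cev{y}$ in $\Gamma_2$ using arcs $R,S$ (allowing the arcs to be empty and the incident vertices to coincide, so that loop edges are covered uniformly). This naturally produces the dichotomy of the statement. The two genuine cases are: (i) $\vec{x},\cev{x}$ in different components of $\Gamma_1$ \emph{and} $\vec{y},\cev{y}$ in different components of $\Gamma_2$; and (ii) everything else, which splits into the three sub-cases same/same, same/different, and different/same.

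In each case I would substitute the split edges $x_1,x_2,y_1,y_2$ for $x,y$ and trace the boundary walk through $u$ using the interleaving rule above, reading off each resulting cycle as an explicit word in $P,Q,R,S$ and the $\vec{g}$-type edges, just as $\eta_1,\eta_2$ are produced in Proposition~\ref{prop:join}. The expected outcome is: in case (i) the four old components ($\partial^{(1)},\partial^{(2)}$ from $\Gamma_1$ and $\zeta^{(1)},\zeta^{(2)}$ from $\Gamma_2$) fuse into a single new component, all other $(b_1-2)+(b_2-2)$ components surviving unchanged, giving $b_1+b_2-3$; while in the same/same sub-case the two components $\partial_x$ and $\zeta_y$ merge into one, giving $(b_1-1)+(b_2-1)+1=b_1+b_2-1$, and in the same/different sub-case the three components involved recombine into two, again giving $b_1+b_2-1$. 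The remaining different/same sub-case follows by the symmetry of the construction under interchanging the two factors, so all of case (ii) yields $b_1+b_2-1$. The only real obstacle is bookkeeping: keeping the orientations of the four strands consistent while threading them through the crossing at $u$, so that the arcs $P,Q,R,S$ concatenate in the correct cyclic order and no cycle is miscounted. Once the local rule at $u$ is pinned down, each case reduces to a mechanical trace, and the count follows immediately.
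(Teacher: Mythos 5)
Your proposal is correct and follows essentially the same route as the paper: the paper also splits $x,y$, writes the affected boundary components as words in arcs $P_i,Q_j$, and traces the new boundary cycles through the four-valent vertex $u$ in exactly the four sub-cases you list, obtaining the words $\eta$ (or $\eta_1,\eta_2$) that your interleaving rule $\vec{x}_1\mapsto\cev{y}_1$, $\vec{y}_1\mapsto\vec{x}_2$, $\cev{x}_2\mapsto\vec{y}_2$, $\cev{y}_2\mapsto\cev{x}_1$ reproduces verbatim. The only difference is cosmetic: you make the local transition rule at $u$ explicit (and borrow the ``$\vec{g}$-edge'' name from the connected-sum construction, which does not apply here), whereas the paper reads the same transitions off the figure.
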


\begin{proof}
Let $\left\lbrace\partial_i| i=1, \dots, b_1\right\rbrace$ and $\left\lbrace \zeta_j| j=1, \dots, b_2\right\rbrace$ be the sets of boundary components of $\Gamma_1$ and $\Gamma_2$ respectively. There are four cases to  be considered. 

\subsection*{Case 1} Consider, $\vec{x}, \cev{x}$ are in the same boundary component of $\Gamma_1$, say $\partial_1$ and $\vec{y}, \cev{y}$ are in $\zeta_1$. Then one can write $\partial_1 = \vec{x}P_1\cev{x}P_2$ and $\zeta_1=\vec{y}Q_1\cev{y}Q_2$, where $P_i, Q_i, i=1,2$ are paths in the boundary (Figure~\ref{plumb}). Then we have, $\partial(\Gamma_1\#\Gamma_2)_{(x, y)} = \left\lbrace  \partial_i| i=2, \dots, b_1 \right\rbrace \cup  \left\lbrace \zeta_j | j=2, \dots, b_2 \right\rbrace \cup \left\lbrace \eta \right\rbrace,$ where $\eta = \vec{x}_1 \cev{y}_1 Q_2 \vec{y}_1 \vec{x}_2 P_1  \cev{x}_2 \vec{y}_2 Q_1 \cev{y}_2 \cev{x}_1 P_2$. Hence, we have $$ |\partial (\Gamma_1 \# \Gamma_2)_{(x, y)} | = b_1+b_2-1.$$
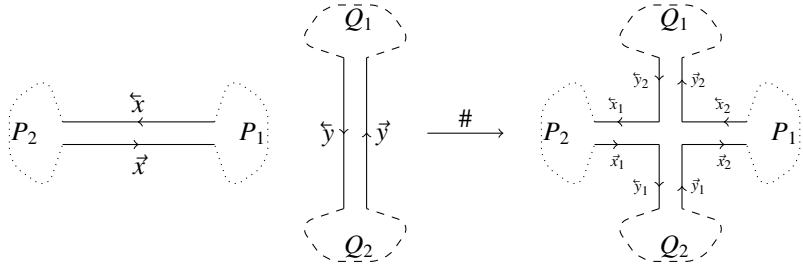
\begin{figure}[htbp]
\begin{center}
\begin{tikzpicture}
\draw (-5, -0.15) -- (-3, -0.15); \draw (-5, 0.15) -- (-3, 0.15); \draw [dotted, rounded corners = 2mm] (-3, 0.15) -- (-2.8, 0.75) -- (-2.3, 0.4)-- (-2.3, -0.4) -- (-2.8, -0.75) -- (-3, -0.15); \draw [dotted, rounded corners = 2mm] (-5, 0.15) -- (-5.2, 0.75) -- (-5.7, 0.4) -- (-5.7, -0.4) -- (-5.2, -0.75) -- (-5, -0.15);
\draw [->] (-4, 0.15) -- (-4.01, 0.15)node [above] {$\cev{x}$}; \draw [->] (-4.01, -0.15) -- (-4, -0.15)node [below] {$\vec{x}$}; \draw (-5.5, 0) node {$P_2$}; \draw (-2.5, 0) node {$P_1$};
\draw (1.5, 0) node {$P_2$}; \draw (4.5, 0) node {$P_1$};

\draw (-1.3, -1) -- (-1.3, 1); \draw (-1, -1) -- (-1, 1); \draw [dashed, rounded corners=2mm] (-1.3, -1) -- (-1.9, -1.2) -- (-1.55, -1.7) -- (-0.75,-1.7) -- (-0.4,-1.2) -- (-1, -1); \draw [dashed, rounded corners = 2mm] (-1.3, 1) -- (-1.9, 1.2) -- (-1.55, 1.7) -- (-0.75, 1.7) -- (-0.4, 1.2) -- (-1, 1);
\draw [->] (-1.3, 0) -- (-1.3, -0.01)node [left] {$\cev{y}$}; \draw [->] (-1, 0) -- (-1, 0.01)node [right] {$\vec{y}$}; \draw (-1.1, 1.5) node {$Q_1$}; \draw (3.05, 1.5) node {$Q_1$}; \draw (-1.1, -1.5) node {$Q_2$}; \draw (3.05, -1.5) node {$Q_2$};

\draw [->] (-0.2, 0) -- (0.8, 0); \draw (0.3, 0.2) node {$\#$};

\draw (2, -0.15) -- (2.85, -0.15); \draw [->] (2.3, -0.15) -- (2.31, -0.15) node [below] {\tiny $\vec{x}_1$}; \draw [->] (2.31, 0.15) -- (2.3, 0.15) node [above] {\tiny $\cev{x}_1$}; \draw (3.15, -0.15) -- (4, -0.15); \draw [->] (3.71, 0.15) -- (3.7, 0.15) node [above] {\tiny $\cev{x}_2$}; \draw [->] (3.7, -0.15) -- (3.71, -0.15) node [below] {\tiny $\vec{x}_2$}; \draw (2, 0.15) -- (2.85, 0.15); \draw (3.15, 0.15) -- (4, 0.15); \draw [dotted, rounded corners = 2mm] (4, 0.15) -- (4.2, 0.75) -- (4.7, 0.4)-- (4.7, -0.4) -- (4.2, -0.75) -- (4, -0.15); \draw [dotted, rounded corners = 2mm] (2, 0.15) -- (1.8, 0.75) -- (1.3, 0.4) -- (1.3, -0.4) -- (1.8, -0.75) -- (2, -0.15);

\draw (2.85, -1) -- (2.85, -0.15); \draw (2.85, 0.15)-- (2.85, 1); \draw [->] (2.85, -0.7) -- (2.85, -0.71) node [left] {\tiny $\cev{y}_1$}; \draw [->] (3.15, -0.71) -- (3.15, -0.7) node [right] {\tiny $\vec{y}_1$}; \draw [->] (2.85, 0.71) -- (2.85, 0.7) node [left] {\tiny $\cev{y}_2$}; \draw [->] (3.15, 0.7) -- (3.15, 0.71) node [right] {\tiny $\vec{y}_2$};  \draw (3.15, -1) -- (3.15, -0.15); \draw (3.15, 0.15)-- (3.15, 1); \draw [dashed, rounded corners=2mm] (2.85, -1) -- (2.25, -1.2) -- (2.6, -1.7) -- (3.4,-1.7) -- (3.75,-1.2) -- (3.15, -1); \draw [dashed, rounded corners = 2mm] (-1.3+4.15, 1) -- (2.25, 1.2) -- (2.6, 1.7) -- (3.4, 1.7) -- (3.75, 1.2) -- (3.15, 1);
\end{tikzpicture}
\end{center}
\caption{Local picture of plumbing (Case 1)}\label{plumb}
\end{figure}

\subsection*{Case 2} In this case, we consider $\vec{x}\in \partial_1, \cev{x_2}\in \partial_2$ and $\vec{y}\in \zeta_1, \cev{y}\in \zeta_2$. We write, $\partial_1=\vec{x} P_1, \partial_2 = \cev{x} P_2$ and $\zeta_1=\vec{y} Q_1, \zeta_2 = \cev{y} Q_2$. We have $$\partial(\Gamma_1\#\Gamma_2)_{(x, y)} = \left\lbrace \partial_i | i = 3, \dots, b_1\right\rbrace \cup \left\lbrace \zeta_j | j=3, \dots, b_2 \right\rbrace \cup \left\lbrace \eta \right\rbrace,$$ where $\eta = \vec{x}_1 \cev{y}_1 Q_2 \cev{y}_2 \cev{x}_1P_2 \cev{x}_2 \vec{y}_2 Q_1 \vec{y}_1\vec{x}_2 P_1$, which implies that  $$ |\partial (\Gamma_1 \# \Gamma_2)_{(x, y)} | = b_1+b_2-3.$$

\subsection*{Case 3} Consider $\vec{x}, \cev{x}\in \partial_1$ and $\vec{y}\in \zeta_1, \cev{y}\in \zeta_2$. Similarly, as in Case 1 and Case 2, we can write $\partial_1=\vec{x} P_1 \cev{x} P_2, \zeta_1=\vec{y}_1 Q_1$, $\zeta_2 = \cev{y} Q_2$. Then the boundary of the new fat graph is given by $$\partial(\Gamma_1\#\Gamma_2)_{(x, y)}=\left\lbrace \partial_i |i=2, \dots, b_1 \right\rbrace \cup \left\lbrace \zeta_j | j=3, \dots, b_2 \right\rbrace \cup \left\lbrace \eta_1, \eta_2 \right\rbrace,$$ where $\eta_1=\vec{x}_1 \cev{y}_1 Q_2 \cev{y}_2 \cev{x}_1 P_2, \eta_2=\vec{x}_2 P_1 \cev{x}_2 \vec{y}_2 Q_1 \vec{y}_1$. This implies that  $$|\partial(\Gamma_1\#\Gamma_2)_{(x, y)}|=b_1+b_2-1.$$ 

\subsection*{Case 4} Here, assume $\vec{x}\in \partial_1, \cev{x}\in \partial_2$ and $\vec{y}, \cev{y}\in \zeta_1$. This case is the same as Case 3, after interchange the role of $\Gamma_1$ and $\Gamma_2$, hence we are done.
\end{proof}

%%%%%%%%%% SECTION 3: Proof of theorem 1.1 %%%%%%%%%

\section{Growth of U$_{g,b}$}
In this section, we prove Theorem~\ref{res:1}. To this end, we begin by proving an upper bound on $\U_{g,b}$.
\begin{lemma}\label{lem:upper_bound}
$\U_{g,b}\leq 2g+b-1,$ for all $g,b\in \mathbb{N}$.
\end{lemma}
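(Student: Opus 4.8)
The plan is to interpret a filling $\Omega=\{\gamma_1,\dots,\gamma_s\}$, put in minimal position, as an embedded graph $G=\bigcup_i\gamma_i\subset F_g$ whose vertices are the mutual intersection points of the curves, whose edges are the arcs into which the curves are cut by those points, and whose complementary faces are the $b$ discs of $F_g\setminus\Omega$. First I would record the combinatorial data. If $V$ is the number of intersection points, then since every crossing is a transverse double point the graph $G$ is $4$-regular, so $2E=4V$, i.e. the number of edges is $E=2V$, while the number of faces is $F=b$. Substituting into the Euler relation $V-E+F=2-2g$ gives $V-2V+b=2-2g$, hence $V=2g+b-2$; this is just the identity $\sum_{i<j} i(\gamma_i,\gamma_j)=2g-2+b$ already quoted in the introduction.

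The heart of the matter is then to bound the number of curves $s$ in terms of $V$. For this I would pass to an auxiliary multigraph $\mathcal{G}$ whose $s$ vertices are the curves $\gamma_1,\dots,\gamma_s$, with one edge joining $\gamma_i$ to $\gamma_j$ for each of the $i(\gamma_i,\gamma_j)$ points those two curves share; thus $\mathcal{G}$ has exactly $V=\sum_{i<j} i(\gamma_i,\gamma_j)$ edges. The crucial claim is that $\mathcal{G}$ is connected. This follows from the connectedness of $G$ itself: since $F_g\setminus\Omega$ is a union of open discs, $F_g$ is recovered from $G$ by attaching $2$-cells along connected boundary curves, and such attachments cannot merge distinct components of $G$, so the connectedness of $F_g$ forces $G$ to be connected. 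A path in $G$ changes curves only at intersection points, so it projects to a path in $\mathcal{G}$, and connectedness of $G$ therefore passes to $\mathcal{G}$.

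Finally I would invoke the elementary fact that a connected (multi)graph on $s$ vertices has at least $s-1$ edges. Applied to $\mathcal{G}$ this yields $V\geq s-1$, and hence $s\leq V+1=2g+b-1$. As the filling was arbitrary, this gives $\U_{g,b}\leq 2g+b-1$, which is the assertion.

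I expect the only genuinely delicate point to be the justification that $G$ is connected, together with the transfer of this connectedness to $\mathcal{G}$; the Euler-characteristic bookkeeping and the graph-theoretic inequality are routine. In particular one must rule out a curve disjoint from all the others — such a curve would be an isolated circle, hence a separate component of $G$ and an isolated vertex of $\mathcal{G}$ — and this is exactly what connectedness of $G$ forbids, so the bound remains valid with no special cases.
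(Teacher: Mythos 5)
Your proof is correct and takes essentially the same route as the paper: both arguments use Euler's formula on the $4$-valent graph $G=\bigcup_i\gamma_i$ to count $2g+b-2$ intersection points, and then use connectedness of $G$ to show that a filling of size $s$ must have at least $s-1$ of them. The only cosmetic difference is that the paper proves the ``connected multigraph on $s$ vertices has at least $s-1$ edges'' step by hand, reordering the curves so that each new curve meets the union of its predecessors, whereas you package this as a standard fact about the intersection multigraph $\mathcal{G}$.
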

\begin{proof}
Let $\Omega = \left\lbrace \gamma_1, \gamma_2, \dots, \gamma_k \right\rbrace$ be a  filling of $F_g$. We regard the union $G=\bigcup_{i=1}^k \gamma_i$ as a connected $4$-regular decorated fat graph on $F_g$, where the intersection points $\gamma_i\cap \gamma_j, i\neq j \in \left\lbrace 1,2,\dots, k \right\rbrace $ are the vertices and the sub-arcs of $\gamma_i$'s connecting the vertices are the edges. The fat graph structure is uniquely determined by the orientation of the surface. By Euler's equation and valency condition, we note that the number of vertices and edges in $G$ are $2g-2+b$ and $4g-4+2b$ respectively. 
%If we cut $F_g$ along the graph $G$, we obtain a $\left( 8g-4 \right)$-sided polygon. 

We rename the elements of the filling as  $\Omega=\left\lbrace \delta_1, \delta_2, \dots, \delta_k\right\rbrace$ such that $\left(\bigcup_{i=1}^m\delta_i\right)\bigcap \delta_{m+1} \neq \emptyset,$ for all $ m=1,2\dots, k-1.$ We begin with considering $\delta_1=\gamma_1$. The graph $G$ is connected implies that $\delta_1$ has non-empty intersection with $\bigcup_{i=2}^k\gamma_i$. Therefore, there is a curve $\gamma_{i_2}\in \Omega\setminus \left\lbrace \delta_1 \right\rbrace $ such that $\gamma_{i_2}\cap \delta_1\neq \emptyset$. We choose $\delta_2=\gamma_{i_2}$. In this fashion, we rename the elements of $\Omega$.   

For each $m=2,\dots, k,$ the union $ G_m = \bigcup_{i=1}^{m} \delta_i$ is a connected $4$-regular decorate fat graph on $F_g$ and $G=G_k$. Let $V_m$ denote the number of vertices in $G_m$. Then $V_{m+1}\geq V_m+1\text{ and }V_2\geq 1$, which gives $V_{m+1}\geq m.$ In particular, $V_k\geq k-1$ and hence, $2g+b-1\geq k$. Therefore, we have $U_g\leq 2g+b-1$.
\end{proof}
We prove Theorem~\ref{res:1} by induction on $b$. To apply the induction step, we need to prove Theorem~\ref{res:1} for minimal fillings i.e, the case when $b=1$.
\begin{prop}\label{prop:minimal_filling}
$\U_{g,1}=2g$, for all $g\in \mathbb{N}$.
\end{prop}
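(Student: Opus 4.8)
The plan is to prove $\U_{g,1}=2g$ by establishing both inequalities. The upper bound $\U_{g,1}\leq 2g$ is immediate from Lemma~\ref{lem:upper_bound} by setting $b=1$. Hence the entire task reduces to the lower bound: for every $g\geq 1$ I must exhibit a \emph{minimal} filling (i.e. one with a single complementary disc) of $F_g$ whose size is exactly $2g$. Equivalently, in the language of the preceding section, I need to construct a connected $4$-regular decorated fat graph of genus $g$ with exactly one boundary component and exactly $2g$ standard cycles, since each standard cycle corresponds to one simple closed curve $\gamma_i$ in the filling and the single boundary component corresponds to the one complementary disc.

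First I would settle the base case. For $g=1$ the claim is that there is a minimal filling of the torus of size $2$; this is the standard filling pair consisting of a meridian and a longitude meeting in one point, whose complement is a single disc, realizing $s=2=2g$. I would record the corresponding fat graph (a single vertex of degree $4$ with the two loops arranged so that $\sigma_1*\sigma_0^{-1}$ has one cycle) explicitly, as this seed graph is what the induction feeds on.

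For the inductive step I would use the \emph{join} operation of subsection~\ref{jn} together with Proposition~\ref{prop:join}. Assume a decorated fat graph $\Gamma_g$ of genus $g$ with a single boundary component and $2g$ standard cycles has been constructed. I would take $\Gamma_1$ to be the torus seed described above (genus $1$, one boundary component, two standard cycles) and form the join $\Gamma_{g+1}=(\Gamma_g,x)\#(\Gamma_1,y)$ along suitably chosen edges $x,y$. Since both graphs have a single boundary component, I can choose $x$ and $y$ so that the hypothesis $\vec{x},\cev{x}\in\partial$ and $\vec{y},\cev{y}\in\eta$ of Proposition~\ref{prop:join} holds (both edges lie on the unique boundary cycle of their respective graphs). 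Proposition~\ref{prop:join} then yields genus $g(\Gamma_{g+1})=g+1-1+1=g+1$, wait—more precisely $g_1+g_2-1=g+1-1=g$ is \emph{not} what I want, so I must instead choose $x,y$ so that the \emph{otherwise} branch applies, giving $g(\Gamma_{g+1})=g+1$, boundary count $b_1+b_2-2=1+1-2=0$. That produces no boundary component, which is also wrong. The resolution, and the key subtlety, is that the join reduces the boundary count by two in the generic case, so a direct join of two single-boundary graphs cannot preserve the single-boundary condition; I therefore expect the actual construction to require a more careful choice, likely joining in the first-branch configuration (giving boundary $b_1+b_2=2$ and genus $g$) and then performing an additional operation, or building the genus-$g$ family directly rather than by naive iterated join.

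The main obstacle, then, is arranging the combinatorics so that the complementary region stays a single disc while the genus climbs and the number of standard cycles reaches exactly $2g$. I would resolve this by giving an \emph{explicit} family of fat graphs $\{\Gamma_g\}_{g\geq 1}$—a direct generalization of the genus-$2$ example $G_1$ of Example~\ref{eg:triple_2}, but with the vertices arranged to yield $2g$ standard cycles instead of fewer—and then verifying directly that $\sigma_1*\sigma_0^{-1}$ has a single cycle (one boundary component), that the genus computed via $b-|V|=2-2g$ equals $g$, and that the standard cycles number exactly $2g$. The verification that there is a unique boundary component is the delicate computational heart of the argument; I expect to carry it out by tracing the permutation $\sigma_1*\sigma_0^{-1}$ through the explicitly listed cyclic orders, exactly as was done for $G_1$, and to confirm the Euler-characteristic bookkeeping $|V|=2g-2+1=2g-1$ against the number of standard cycles to certify both the genus and the size $s=2g$.
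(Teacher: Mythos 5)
Your reduction of the problem is exactly right and matches the paper: the upper bound is immediate from Lemma~\ref{lem:upper_bound} with $b=1$, and the whole content of the proposition is to exhibit a connected $4$-regular decorated fat graph of genus $g$ with one boundary component and $2g$ standard cycles. Your diagnosis that an iterated join of single-boundary graphs cannot work is also correct and worth having on record: when both factors have $b_1=b_2=1$, only the first branch of Proposition~\ref{prop:join} can occur, yielding two boundary components and genus $g_1+g_2-1$, so the single-disc condition is destroyed at the first step. The paper indeed does not use join here.

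The genuine gap is that you never produce the object. Your proposal ends with ``I would give an explicit family $\{\Gamma_g\}$ generalizing $G_1$ and verify it,'' but the existence of that family \emph{is} the proposition; everything else (the Euler count $|V|=2g-1$, tracing $\sigma_1*\sigma_0^{-1}$ to confirm a single boundary cycle) is bookkeeping you have correctly described but cannot carry out without a concrete $\sigma_0$. Note also that $G_1$ is not an ideal template: it has only $3$ standard cycles on genus $2$, whereas you need $2g$ of them, so the intersection pattern must be completely different --- with $2g$ curves and only $2g-1$ intersection points in total, the intersection graph is forced to be a tree. The paper's $\Gamma_g$ realizes the simplest such tree, a path: a chain of $2g-1$ vertices joined by consecutive double edges with a loop at each end, whose standard cycles are the two loops (length $1$) and the $2g-2$ bigons (length $2$), with the cyclic orders chosen so that $\sigma_1*\sigma_0^{-1}$ is a single $(4g-2)$-cycle. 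To complete your proof you would need to write down such a family and perform the boundary trace; as it stands, the lower bound is asserted rather than proved.
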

\begin{proof}
By Lemma~\ref{lem:upper_bound}, we have $U_{g,1}\leq 2g$. To complete the proof, we construct an example of minimally intersecting filling set $\Omega_{g}^{\max}$ with $2g$ curves.   

Consider the $4$-regular decorated fat graph $\Gamma_g=\left( E, \sim, \sigma_1, \sigma_0 \right)$ described below.
\begin{enumerate}
\item $E=\left\lbrace \vec{e}_i, \cev{e}_i\;|\;i=1,2,\dots, 2\left( 2g-1 \right) \right\rbrace$.
\item The set of equivalence classes of $\sim$ is $V=\left\lbrace v_j|\; j=1, \dots, 2g-1 \right\rbrace$, where 
\[
  v_j = \left\{\def\arraystretch{1.2}%
  \begin{array}{@{}c@{\quad}l@{}}
    \left\lbrace \vec{e}_1, \vec{e}_2, \cev{e}_1, \cev{e}_3 \right\rbrace & \text{if $j=1$},\\
    \left\lbrace \vec{e}_{2j-1}, \vec{e}_{2j}, \cev{e}_{2j-2}, \cev{e}_{2j+1}\right\rbrace & \text{if $2\leq j \leq 2g-2$} \text{ and}\\
    \left\lbrace \vec{e}_{4g-3}, \vec{e}_{4g-2}, \cev{e}_{4g-4}, \cev{e}_{4g-2}\right\rbrace & \text{if $j=2g-1$}.\\
    
  \end{array}\right.
\]
\item The fixed point free involution $\sigma_1$ is defined by $\sigma_1 \left( \vec{e}_i \right) = \cev{e}_i$ for $i=1,2,\dots, 2 \left( 2g-1 \right).$
\item The permutation $\sigma_0$ is given by $\sigma_0=\prod\limits_{j=1}^{2g-1}\sigma_{v_j}$, where 
\[
 \sigma_{v_j} = \left\{\def\arraystretch{1.2}%
  \begin{array}{@{}c@{\quad}l@{}}
    \left( \vec{e}_1, \vec{e}_2, \cev{e}_1, \cev{e}_3 \right) & \text{if $j=1$},\\
    \left( \vec{e}_{2j-1}, \vec{e}_{2j}, \cev{e}_{2j-2}, \cev{e}_{2j+1} \right) & \text{if $2\leq j \leq 2g-2$}\text{ and}\\
    \left( \vec{e}_{4g-3}, \vec{e}_{4g-2}, \cev{e}_{4g-4}, \cev{e}_{4g-2} \right) & \text{if $j=2g-1$}.\\
    
  \end{array}\right.
\]
\end{enumerate}

Now, we count the number of boundary components of the graph. For each $g\geq 2$, we define,
\begin{align*}
P\left( g \right) &= \left( \cev{e}_3 *\vec{e}_4 \right) * \left( \cev{e}_7 * \vec{e}_8 \right) * \cdots * \left( \cev{e}_{4g-5} * \vec{e}_{4g-4} \right) * \cev{e}_{4g-2},\\ 
Q \left( g \right) &= \left( \cev{e}_{4g-4} * \cev{e}_{4g-6} * \dots * \cev{e}_4 * \cev{e}_2 \right) * \cev{e}_1,\\ 
R \left( g \right) &= \vec{e}_2 * \left( \cev{e}_5 * \vec{e}_6 \right) * \left( \cev{e}_9 * \vec{e}_{10} \right) * \dots * \left( \cev{e}_{4g-3} * \vec{e}_{4g-2} \right)\text{ and} \\ 
S \left( g \right) &= \vec{e}_{4g-3} * \vec{e}_{4g-5} * \cdots * \vec{e}_5 * \vec{e}_3 * \vec{e}_1. 
\end{align*}
The boundary of the fat graph $\Gamma_g$ is given by $\partial \Gamma_g= P \left( g \right) * Q \left( g \right) * R \left( g \right) * S \left( g \right).$ By attaching a topological disc along the boundary of $\Gamma_g$, we obtain closed oriented surface $F_g$. Then the set of standard cycles of $\Gamma_g$, we denote by $\Omega_g^{\max}$, is a minimally intersecting filling set of $F_g$ of size $2g$. This completes the proof.
\end{proof}

We are now ready to prove Theorem~\ref{res:1}.
\begin{proof}[Proof of Theorem~\ref{res:1}]
Lemma~\ref{lem:upper_bound} implies that $U_{g,b}\leq 2g-1+b$. To complete the proof, we show the existence of a filling system of size $2g-1+b$ with $b$ complementary discs.  We prove this result by induction on $b$. For $b=1$, the result follows from Propostion~\ref{prop:minimal_filling}, and so we assume $b\geq 2$. Let $\Gamma_{g}$ be the $4$-regular fat graph, constructed in the proof of Proposition~\ref{prop:minimal_filling}, which corresponds to a minimal filling of size $2g$.  Consider $ \Gamma^1 = \left( \Gamma_g, x \right) \# \left( \Gamma_1, y\right)$, where $x=\{\vec{x},\cev{x}\}$ and $y=\{\vec{y},\cev{y}\}$ are edges of $\Gamma_g$ and $\Gamma_1$ respectively. It follows from (1)-(3) Proposition~\ref{prop:join} that $\Gamma^1$ corresponds to a filling system of size $2g+1$ such that the complement is a pair of disjoint discs. If $b=2$, we are done, else we define $\Gamma^2= \left( \Gamma^1, z \right) \# \left( \Gamma_1, y\right)$, where $z=\{\vec{z},\cev{z}\}$ is an edge of $\Gamma^1$ such that $\vec{z},\cev{z}\in  \eta$ for some $\partial \Gamma^1$, and its existence is assured by the proof of  Proposition~\ref{prop:join}. By the same argument, $\Gamma^2$ gives a filling system of size $2g+2$ such that the complement is a triple of disjoint discs. Continuing this construction $b-1$ times, we will get a fat graph $\Gamma^{b-1}$ which realises a filling system of size $2g+b-1$ such that the complement is a disjoint union of $b$ discs. This completes the proof.
\end{proof}

%%%%% SECTION 4: Minimal fillings of all possible sizes %%%%%%%%%%%

\section{Minimal filling systems of all possible sizes} 

In this section, we prove a special case of Theorem~\ref{res:2}, where $b=1$, i.e., the filling systems are minimal. In section~\ref{sec:5}, this will be used as an initial step for induction argument in proving Theorem~\ref{res:2} for generic fillings. More precisely, we prove the proposition below:
\begin{prop}\label{prop:minimal}
Let $g\geq 2$ be an integer. For each $s$ satisfying $L_{g,1} \leq s \leq U_{g, 1}$, there exists a minimal filling system $\Omega_s$ of $F_g$ with $|\Omega_s|=s.$ 
\end{prop}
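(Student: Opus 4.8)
The plan is to recast the statement in fat-graph language and then realise every admissible size by a single induction on the genus, using only the \emph{join} of Proposition~\ref{prop:join}. Recall $L_{g,1}=3$ for $g=2$ and $L_{g,1}=2$ for $g\geq 3$, whereas $U_{g,1}=2g$ by Proposition~\ref{prop:minimal_filling}; so for each fixed $g$ I must produce a $4$-regular decorated fat graph on $F_g$ with a single boundary component and exactly $s$ standard cycles, for every $s$ in the stated range. Two extreme values are free: $s=2g$ is the graph $\Gamma_g$ of Proposition~\ref{prop:minimal_filling}, and $s=2$ (for $g\geq 3$) is a minimal filling pair, which exists by~\cite{BS,TA}. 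The base genus $g=2$ is immediate from $\Gamma_2$ (size $4$) and the triple $G_1$ of Example~\ref{eg:triple_2} (size $3$).

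For the inductive engine I would fix two small auxiliary graphs: a decorated fat graph $\Lambda_2$ of genus $1$ with two boundary components and two standard cycles (two curves meeting twice), and likewise $\Lambda_3$ of genus $1$, two boundary components, and three standard cycles. Each can be written down explicitly, and in each one can select an edge $x$ whose two orientations $\vec{x},\cev{x}$ lie on \emph{different} boundary components. Joining $\Lambda_\sigma$ ($\sigma\in\{2,3\}$) to a minimal filling $\Gamma$ of $F_{g-1}$ of size $s'$ along such an $x$ and an arbitrary edge $y$ of $\Gamma$ places us in the ``otherwise'' branch of Proposition~\ref{prop:join} (Case $3$ of its proof): the join has one boundary component, genus $1+(g-1)=g$, and $\sigma+s'-1$ standard cycles. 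Thus I obtain two size-raising moves, $s'\mapsto s'+1$ and $s'\mapsto s'+2$, each increasing the genus by exactly one. Granting inductively that all sizes in $[2,2g-2]$ are realised on $F_{g-1}$ (valid once $g\geq 4$, since then $g-1\geq 3$), the first move yields sizes $[3,2g-1]$ and the second yields $[4,2g]$ on $F_g$; adjoining the size-$2$ filling pair covers the entire interval $[2,2g]$.

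What remains are the low anchors. For $g=3$ the moves above, applied to the only available sizes $3,4$ on $F_2$, give sizes $4,5,6$, and with the size-$2$ pair this leaves exactly $s=3$; I would supply a genus-$3$ minimal filling triple directly, by an explicit fat graph modelled on Example~\ref{eg:triple_2}. Together with $g=2$ this anchors the induction. I expect the genuine difficulty to lie not in the arithmetic of sizes but in the \emph{verification} that every graph produced is an honest minimal filling: one must confirm that its standard cycles are embedded, pairwise non-homotopic, and in minimal position, and—most delicately—that the boundary bookkeeping really lands in Case $3$ of Proposition~\ref{prop:join} at each step, i.e.\ that the auxiliary edge $x$ can always be chosen straddling the two boundary components of $\Lambda_\sigma$. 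The explicit boundary words computed inside Proposition~\ref{prop:join} and the length bound of Corollary~\ref{cor:join} are the tools I would use to control this.
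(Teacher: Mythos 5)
Your architecture is genuinely different from the paper's: the paper's engine is the plumbing move of Lemma~\ref{lem:plumb} (genus $+1$, size $+2$, via a genus-one, one-boundary graph), supplemented by fresh anchors at \emph{every} genus for $s=2$ and $s=3$ (Theorem 1.4 of~\cite{BS} and Lemma~\ref{thm:triple}) plus a quadruple at genus $3$; you instead propose two join moves with genus-one, two-boundary-component pieces. Your $\Lambda_2$ move is sound: two curves on the torus meeting twice give a $4$-regular fat graph with $(g,b,s)=(1,2,2)$, connectivity of the torus forces some edge to border both complementary discs, and Proposition~\ref{prop:join} then yields genus $g$, one boundary component and $s'+1$ standard cycles. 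But $\Lambda_3$ does not exist in the form you need, and this is a genuine gap. A $4$-regular fat graph with $(g,b)=(1,2)$ has exactly $2g-2+b=2$ vertices by Euler's formula, so its three standard cycles would be three simple closed curves on the torus with total pairwise intersection number $2$; since any two non-homotopic essential simple closed curves on the torus intersect, three pairwise non-homotopic essential ones require at least $3$ intersection points. Connectivity forces any combinatorial realisation of $(g,b,s)=(1,2,3)$ to consist of two disjoint embedded loops, one at each vertex, plus a transversal meeting each once; the two loops are then either inessential or a pair of disjoint essential (hence homotopic) curves on the torus. So $\Lambda_3$ cannot be ``written down explicitly'' as a filling system, and after joining you would owe a separate, unsupplied argument that its two surviving standard cycles become non-homotopic essential curves in minimal position on $F_g$.

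The gap is cheaply repairable because you never actually need the $+2$ move: the $\Lambda_2$ move alone sends the realised range $\left[L_{g-1,1},\,2g-2\right]$ on $F_{g-1}$ to $\left[L_{g-1,1}+1,\,2g-1\right]$ on $F_g$, and the two missing endpoints $s=2g$ and $s=2$ are exactly the ones you already declared free ($\Gamma_g$ from Proposition~\ref{prop:minimal_filling}, and the minimal filling pair of~\cite{BS}), while $s=3$ needs a bespoke construction only at $g=2,3$ since for $g\geq 4$ it arises from $s=2$ on $F_{g-1}$ via the $+1$ move. With $\Lambda_3$ deleted and that bookkeeping inserted your induction closes, and is arguably leaner than the paper's, which must rebuild triples at every genus by connected sums. (Both your argument and the paper's leave the verification that the resulting standard cycles are pairwise non-homotopic and in minimal position implicit, so I do not count that against you.)
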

Before proceeding further, we give an example of fat graph of genus three, which will be instrumental in the construction of a minimal filling triple on any genus surface.

\begin{exa}\label{eg:triple_3}
Consider the  fat graph $\Gamma_0$, given in Figure~\ref{Fig:exa2}.
%\begin{enumerate}
%\item $E=\{\vec{f}_i, \cev{f}_i\;|\;i=1,2,\dots, 10\}$.
%\item The set of equivalence classes of $\sim$ is $V=\{w_1, w_2, \dots, w_5\}$, where 
%$ w_1=(\vec{f}_1, \vec{f}_2, \vec{f}_3, \vec{f}_4),\,\\w_2=(\cev{f}_3, \vec{f}_5, \vec{f}_6, \vec{f}_7),\,w_3=(\cev{f}_4, \cev{f}_6, \vec{f}_8, \vec{f}_9),\,w_4=(\cev{f}_5, \vec{f}_{10}, \cev{f}_7, \cev{f}_9),\,w_5=(\cev{f}_1, \cev{f}_2, \cev{f}_{10}, \cev{f}_8).$
%\item The fixed point free involution $\sigma_1$ is defined by $\sigma_1(\vec{f}_i)=\cev{f}_i$, for $i=1,2,\dots, 10.$
%\item The permutation $\sigma_0$ is given by $\small \displaystyle \sigma_0=\prod_{i=1}^{5}w_i$.
%\end{enumerate}
\begin{figure}[htbp]
\begin{center}
\begin{tikzpicture}
\draw [rounded corners=2mm](5, 1) -- (5, 0) -- (7.3,0) -- (7.3, 3.1) -- (6.3, 3.1) -- (6.3, 4.9) -- (5.3, 4.9) -- (5.3, 6.6)-- (0.7, 6.6) -- (0.7, 5.9);

\draw [rounded corners=2mm](5.3, 1) -- (5.3, 0.3) -- (7, 0.3) -- (7, 2.8) -- (6.3 ,2.8) -- (6.3, 1) -- ( 4.3, 1) -- (4.3, 0) -- (0,0) -- (0, 5.9) -- (3.3,5.9) -- (3.3, 4.9)--(5, 4.9) -- (5, 6.3) -- (1, 6.3) -- (1, 5.9);

\draw [rounded corners=2mm](1, 5.6) -- (1, 3.9) -- (1.7,3.9)-- (1.7, 4.9) -- (3, 4.9) -- (3, 5.6) -- (0.3,5.6) -- (0.3, 0.3) -- (4,0.3) -- (4, 1) -- (1.7, 1) -- (1.7, 3.6)-- (0.7, 3.6) --(0.7, 5.6);

\draw [rounded corners=2mm]  (5, 1.3) -- (5, 2) -- (4.3,2) -- (4.3, 1.3) -- (6,1.3) -- (6, 2.8) -- (3,2.8) -- (3, 4.6) -- (2,4.6) -- (2,3.9) -- (3, 3.9);

\draw [rounded corners=2mm]  (5.3, 1.3) -- (5.3, 2.3) -- (4,2.3) -- (4,1.3)--(2, 1.3) -- (2, 3.6) -- (3, 3.6);

\draw [rounded corners=2mm]  (3.3,3.6) -- (5.3, 3.6) -- (5.3, 4.6) -- (6, 4.6) -- (6, 3.1) -- (3.3, 3.1) -- (3.3,4.6) -- (5, 4.6) -- (5, 3.9) -- (3.3,3.9);

\draw [->] (6, 2.5) -- (6, 2.49)node [left] {$\vec{f}_1$}; \draw [->] (6.8, 2.8) -- (6.81, 2.8)node [below] {$\vec{f}_2$}; \draw [->] (6.3, 3.6) -- (6.3, 3.61)node [right] {$\vec{f}_3$};\draw [->] (5.51, 3.1) -- (5.5, 3.1)node [above] {$\vec{f}_4$}; \draw (6.2,2.95) node {$w_1$};

\draw [->] (5.8, 4.6) -- (5.81, 4.6)node [below] {$\cev{f}_3$}; \draw [->] (5.3, 5.4) -- (5.3, 5.41)node [right] {$\vec{f}_5$}; \draw [->] (4.51, 4.9) -- (4.5, 4.9)node [above] {$\vec{f}_6$};\draw [->] (5, 4.21) -- (5, 4.2)node [left] {$\vec{f}_7$}; \draw (5.2,4.75) node {$w_2$};

\draw (3.2,4.75) node {$w_3$}; \draw [->] (3, 4.21) -- (3, 4.2)node [left] {$\cev{f}_4$}; \draw [->] (3.8, 4.6) -- (3.81, 4.6)node [below] {$\cev{f}_6$}; \draw [->] (3.3, 5.4) -- (3.3, 5.41)node [right] {$\vec{f}_8$}; \draw [->] (2.51, 4.9) -- (2.5, 4.9)node [above] {$\vec{f}_9$};

\draw (1.87,3.75) node {$w_4$}; \draw [->] (2, 4.3) -- (2, 4.31)node [right] {$\cev{f}_9$}; \draw [->] (1.31, 3.9) -- (1.3, 3.9)node [above] {$\cev{f}_5$}; \draw [->] (1.7, 3.21) -- (1.7, 3.2)node [left] {$\vec{f}_{10}$}; \draw [->] (2.4, 3.6) -- (2.41, 3.6)node [below] {$\cev{f}_7$};

\draw (4.17,1.15) node {$w_5$}; \draw [->] (4.3, 1.7) -- (4.3, 1.71)node [right] {$\cev{f}_2$}; \draw [->] (3.51, 1.3) -- (3.5, 1.3)node [above] {$\cev{f}_{10}$}; \draw [->] (4, 0.61) -- (4, 0.6)node [left] {$\cev{f}_8$}; \draw [->] (4.7, 1) -- (4.71, 1)node [below] {$\cev{f}_1$};

\end{tikzpicture}
\end{center}
\caption{$\Gamma_0$.}\label{Fig:exa2}
\end{figure}
It is easy to see that $\Gamma_0$ has one boundary component and $3$ standard cycles of lengths $5,3$ and $2$. Euler's equation implies the genus of $\Gamma_0$ is $3$. Therefore, the set of standard cycles of $\Gamma_0$ gives a minimal filling triple of $F_3$.
\end{exa}

%The following example has been discussed in \cite{BS}, which shows the existence of a minimal filling triple in $F_2$.
The following result shows the existence of a minimal filling $F_g(g\geq 2)$ of size $2g-1.$
\begin{lemma}\label{prop:girth}
For every integer $g\geq 3$, there exists a minimal filling of $F_g$ of size $2g-1$. 
\end{lemma}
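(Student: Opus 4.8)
The plan is to build the required minimal filling by \emph{plumbing} together minimal fillings of smaller genus that are already available. Two ingredients are in hand: the genus-two fat graph $G_1$ of Example~\ref{eg:triple_2}, which has a single boundary component and three standard cycles (a minimal filling triple of $F_2$), and the classical minimal filling pair of the torus, realised by a $4$-regular fat graph $\Delta$ of genus $1$ with one boundary component and two standard cycles (one vertex, two edges). I will argue by induction on $g\ge 3$, at each step plumbing one further copy of $\Delta$ so as to raise the genus by $1$ and the size by $2$.

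For the inductive step, suppose $\Gamma$ is a minimal filling of $F_{g-1}$ of size $2g-3$, presented as a $4$-regular fat graph with one boundary component. Choose any edge $x$ of $\Gamma$ and any edge $y$ of $\Delta$ and form the plumbing $(\Gamma\#\Delta)_{(x,y)}$ of subsection~\ref{pl}. Since each of $\Gamma$ and $\Delta$ has a single boundary component, both $\vec{x},\cev{x}$ lie in that boundary of $\Gamma$ and both $\vec{y},\cev{y}$ lie in that boundary of $\Delta$; we are therefore in Case $1$ of Proposition~\ref{plumb:boundary}, which gives exactly $b_1+b_2-1=1$ boundary component, so the plumbing is again minimal. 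Three bookkeeping facts remain. First, the plumbing introduces a single new $4$-valent vertex $u$ with cyclic order $(\cev{x}_1,\cev{y}_1,\vec{x}_2,\vec{y}_2)$; reading off the opposite pairs $\{\cev{x}_1,\vec{x}_2\}$ and $\{\cev{y}_1,\vec{y}_2\}$ shows that the $x$-strand and the $y$-strand each pass straight through $u$, so no two standard cycles merge and the number of standard cycles is $2g-3+2=2g-1$. Second, a short Euler-characteristic count (the plumbing adds one vertex, splits $x$ and $y$ into two edges each, and keeps the graph $4$-regular) gives genus $(g-1)+1=g$. Hence $(\Gamma\#\Delta)_{(x,y)}$ is a $4$-regular fat graph of genus $g$ with one boundary component and $2g-1$ standard cycles.

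The base case $g=3$ is the plumbing $(G_1\#\Delta)_{(x,y)}$, which by the same computation has genus $3$, one boundary component, and $5$ standard cycles. Iterating the step $g-2$ times starting from $G_1$ then produces, for every $g\ge 3$, a $4$-regular fat graph of genus $g$ with one boundary component whose $2g-1$ standard cycles are the candidates for a minimal filling.

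The combinatorial bookkeeping above is routine; the part requiring genuine care---and the step I expect to be the main obstacle---is verifying that the standard cycles of the resulting fat graph actually constitute a \emph{filling system}, i.e.\ that they are embedded simple closed curves which are pairwise non-homotopic and in minimal position. Simplicity is the easier half: plumbing only creates a new transverse crossing between the $x$-curve of $\Gamma$ and the $y$-curve of $\Delta$, so it introduces no self-intersection and preserves minimal position. Non-homotopy is the delicate point, since a priori two standard cycles coming from different plumbed pieces could become freely homotopic on $F_g$. To rule this out I would exploit that the complement is a single disc, so no two curves can cobound an annulus: two homotopic simple closed curves in minimal position are disjoint and would cobound an annular region disjoint from the single complementary disc, a contradiction. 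Alternatively one can separate the curves by the distinct handles they traverse using their homology classes. Establishing this non-homotopy cleanly and uniformly in $g$ is where the real work of the proof lies.
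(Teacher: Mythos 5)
Your argument is correct, but it takes a genuinely different route from the paper's proof of this lemma. The paper proves Lemma~\ref{prop:girth} by writing down, for each $g\ge 3$, an explicit $4$-regular fat graph on $2g-1$ vertices (listing the cyclic orders $v_1,\dots,v_{2g-1}$) and checking directly that it has one boundary component and $2g-1$ standard cycles. You instead start from the genus-$2$ triple $G_1$ of Example~\ref{eg:triple_2} and repeatedly plumb on the one-vertex genus-$1$ graph, using Case~1 of Proposition~\ref{plumb:boundary} to keep a single boundary component and the cyclic order $(\cev{x}_1,\cev{y}_1,\vec{x}_2,\vec{y}_2)$ at the new vertex to see that no standard cycles merge. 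This is precisely the mechanism the paper itself isolates later as Lemma~\ref{lem:plumb} and deploys in the proof of Proposition~\ref{prop:minimal}; in effect you observe that the case $s=2g-1$ already follows from that plumbing step together with the genus-$2$ base case, which makes the paper's separate explicit construction logically unnecessary for the bare existence statement. What the explicit model buys the paper is that it is reused with extra properties later: in Lemma~\ref{lem:weighted_even} the graph of Lemma~\ref{prop:girth} is invoked for the sharpness of the intersection bound at $(g,s)=(3,5)$, where one needs $\omega_{\max}=2$ and not merely existence; your plumbed graph appears to satisfy this too, but it would require an additional check. Finally, the issues you flag at the end (simplicity, minimal position, non-homotopy of the standard cycles) apply verbatim to the paper's own construction; the paper treats the correspondence between connected $4$-regular fat graphs with disc complement and filling systems as standard and does not revisit these points, so they are not a gap specific to your approach.
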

\begin{proof}
To proof this lemma, we construct a $4$-regular fat graph $\Gamma$ of genus $g$ with a single boundary component and $(2g-1)$ standard cycles. Define $\Gamma = \left( E, \sim, \sigma_1, \sigma_0 \right) $  as follows:

\begin{enumerate}
\item $E=\left\lbrace \vec{e}_i, \cev{e}_i\;|\;i=1,2,\dots, 2(2g-1)\right\rbrace$.
\item The set of equivalence classes of $\sim$ is $V = \left\lbrace v_1, v_2, \dots, v_{2g-1}\right\rbrace,$ where 
\[
  v_j = \left\{\def\arraystretch{1.2}%
  \begin{array}{@{}c@{\quad}l@{}}
    \left( \vec{e}_1, \vec{e}_2, \vec{e}_3, \cev{e}_2 \right) & \text{if $j=1$},\\
     \left( \cev{e}_3, \vec{e}_5, \vec{e}_4, \vec{e}_6\right) & \text{if $j=2$},\\
    \left( \cev{e}_{2j}, \vec{e}_{2j+2}, \cev{e}_{2j-1}, \vec{e}_{2j+1}\right) & \text{if $3\leq j \leq 2g-2$}\text{ and}\\
    \left( \cev{e}_{4g-3}, \cev{e}_{4}, \cev{e}_{4g-2}, \cev{e}_{1}\right) & \text{if $j=2g-1$}.
  \end{array}\right.
\]
\item The fixed point free involution $\sigma_1$ is defined by $\sigma_1 \left( \vec{e}_i \right) = \cev{e}_i$ for $i=1,2,\dots, 2g-1.$
\item The permutation $\sigma_0$ is given by $\sigma_0=\prod\limits_{i=1}^{2g-1}v_i$.
\end{enumerate}
Observe that $\Gamma$ is a $4$-regular fat graph of genus $g$ with one boundary component  and $(2g-1)$ standard cycles, thus the lemma follows.
\end{proof}

To give the construction of a filling triple for the generic case, we follow the approach similar to as in~\cite{BS}, by making a connected sum construction with a fat graph on $F_2$ with four boundary components and two standard cycles, described in the following example.

\begin{exa}\label{eg:pair_2}
Consider the fat graph $G_2$ given in Figure~\ref{exa3}. The graph $G_2$ has two standard cycles and four boundary components. Euler's equation implies the genus of $G_2$ is $2$.
%\begin{enumerate}
%\item $E=\{\vec{x}_i, \vec{y}_i,\cev{x}_i, \cev{y}_i\;|\;i=1,2,\dots, 6\}$.
%
%\item The set of equivalence classes of $\sim$ is $V=\{v_1, v_2, \dots, v_6\}$, where 
%$ v_1=(\vec{x}_1,\cev{y}_6,\cev{x}_6,\vec{y}_1),\,\\v_2=(\cev{x}_2,\cev{y}_1,\vec{x}_3,\vec{y}_2),\,v_3=(\cev{x}_1,\cev{y}_2,\vec{x}_2,\vec{y}_3),v_4=(\vec{x}_4,\cev{y}_3,\cev{x}_3,\vec{y}_4),\,v_5=(\cev{x}_5,\cev{y}_4,\vec{x}_6,\vec{y}_5),\\v_6=(\cev{x}_4,\cev{y}_5,\vec{x}_5,\vec{y}_6).$
%\item The fixed point free involution $\sigma_1$ is defined by $\sigma_1(\vec{x}_i)=\cev{x}_i,\sigma_1(\vec{y}_i)=\cev{y}_i$, for $i=1,2,\dots, 6.$
%\item The permutation $\sigma_0$ is given by $\small \displaystyle \sigma_0=\prod_{i=1}^{6}v_i$.
%\end{enumerate}
\begin{figure}[htbp]
\begin{center}
\begin{tikzpicture}
\draw [rounded corners=2mm](2.3, 1.5) -- (2.3, 0) -- (7,0) -- (7, 3.8) -- (5.9, 3.8) -- (5.9, 5.3) -- (4.6, 5.3) -- (4.6, 6.8) -- (0,6.8) -- (0, 3) -- (1,3) -- (1, 1.5) -- cycle;

\draw [rounded corners=2mm] (3.3, 1.5)-- (3.3, 0.5) -- (4.6, 0.5) -- (4.6, 1.5) -- (5.9, 1.5) -- (5.9, 3.5) -- (6.7,3.5) -- (6.7, 0.3) -- (2.6,0.3) -- (2.6, 1.5) -- (4.3,1.5) -- (4.3, 0.8) -- (3.6, 0.8)--(3.6, 1.5);

\draw [rounded corners=2mm](3.3, 1.8) -- (3.3, 2.5) -- (2.6,2.5) -- (2.6, 1.8) -- (4.3, 1.8) -- (4.3, 3) -- (1.3, 3) -- (1.3, 1.8) -- (2.3, 1.8) -- (2.3, 2.8) -- (3.6, 2.8) -- (3.6, 1.8);

\draw [rounded corners=2mm](3.6, 5) -- (3.6, 4.3) -- (4.3,4.3) -- (4.3, 5) -- (2.6, 5) -- (2.6, 3.8) -- (5.6, 3.8) -- (5.6, 5) -- (4.6, 5) -- (4.6, 4) -- (3.3,4) -- (3.3, 5);

\draw [rounded corners=2mm](3.3, 5.3) -- (3.3, 6) -- (2.6,6) -- (2.6, 5.3) -- (4.3, 5.3) -- (4.3, 6.5) -- (0.3, 6.5) -- (0.3, 3.3) -- (1,3.3) -- (1, 5.3) -- (2.3,5.3) -- (2.3, 6.3) -- (3.6, 6.3 ) --(3.6, 5.3);

\draw [rounded corners=2mm](2.3, 5) -- (2.3, 3.5) -- (5.6,3.5) -- (5.6, 1.8) -- (4.6, 1.8) -- (4.6, 3.3) -- (1.3, 3.3)-- (1.3, 5) --cycle; 

\draw [->] (5.9, 4.3) -- (5.9, 4.31)node [right] {$\vec{y}_1$}; \draw [->] (3.91, 5.3) -- (3.9, 5.3)node [above] {$\vec{y}_2$}; \draw [->] (1.81, 5.3) -- (1.8, 5.3)node [above] {$\vec{y}_3$}; \draw [->] (1, 2.51) -- (1, 2.5)node [left] {$\vec{y}_4$}; \draw [->] (3, 1.5) -- (3.01, 1.5)node [below] {$\vec{y}_5$}; \draw [->] (5.1, 1.5) -- (5.11, 1.5)node [below] {$\vec{y}_6$}; \draw [->] (5.6, 3) -- (5.6, 3.01)node [left] {$\cev{y}_6$}; \draw [->] (3.9, 1.8) -- (3.89, 1.8)node [above] {$\cev{y}_5$}; \draw [->] (1.81, 1.8) -- (1.8, 1.8)node [above] {$\cev{y}_4$}; \draw [->] (1.3, 3.8) -- (1.3, 3.81)node [right] {$\cev{y}_3$}; \draw [->] (3, 5) -- (3.01, 5)node [below] {$\cev{y}_2$}; \draw [->] (5, 5) -- (5.01, 5)node [below] {$\cev{y}_1$};

\draw [->] (5.11,3.8) -- (5.1, 3.8)node [above] {$\vec{x}_1$}; \draw [->] (2.6,5.7) -- (2.6, 5.71)node [right] {$\vec{x}_2$}; \draw [->] (4.6,5.7) -- (4.6, 5.71)node [right] {$\vec{x}_3$}; \draw [->] (1.8, 3) -- (1.81, 3)node [below] {$\vec{x}_4$}; \draw [->] (4.3,1.11) -- (4.3, 1.1)node [left] {$\vec{x}_5$}; \draw [->] (2.3,1.11) -- (2.3, 1.1)node [left] {$\vec{x}_6$}; \draw [->] (6.3,3.5) -- (6.31, 3.5)node [below] {$\cev{x}_6$}; \draw [->] (2.6,2.2) -- (2.6, 2.21)node [right] {$\cev{x}_5$};  \draw [->] (4.6,2.2) -- (4.6, 2.21)node [right] {$\cev{x}_4$};  \draw [->] (0.61,3.3) -- (0.6, 3.3)node [above] {$\cev{x}_3$};  \draw [->] (4.3,4.61) -- (4.3, 4.6)node [left] {$\cev{x}_2$}; \draw [->] (2.3,4.61) -- (2.3, 4.6)node [left] {$\cev{x}_1$};

\draw (5.75, 3.65) node {$v_1$}; \draw (4.45, 5.15) node {$v_2$}; \draw (2.45, 5.15) node {$v_3$}; \draw (4.45, 1.65) node {$v_6$}; \draw (2.45, 1.65) node {$v_5$}; \draw (1.15, 3.15) node {$v_4$}; 
\end{tikzpicture}
\end{center}
\caption{$G_2$}\label{exa3}
\end{figure}
\end{exa}
\noindent Before going in to the proof of Proposition~\ref{prop:minimal}, we develop the following lemmas:
\begin{lemma}\label{thm:triple}
For every positive integer $g\geq 2$, there exists a minimal filling triple of $F_g$.
\end{lemma}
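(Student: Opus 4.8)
The plan is to argue by induction on the genus $g$ in steps of two, using the two explicit minimal filling triples already in hand as base cases and the \emph{connected sum} operation to pass from $F_g$ to $F_{g+2}$. For the base cases, Example~\ref{eg:triple_2} exhibits the fat graph $G_1$ of genus $2$ with one boundary component and three standard cycles, hence a minimal filling triple of $F_2$, and Example~\ref{eg:triple_3} exhibits $\Gamma_0$ of genus $3$ with the same data, hence a minimal filling triple of $F_3$. Since every integer $g\geq 2$ is either $2$ or $3$ plus an even number, these two seeds, together with an inductive step that raises the genus by $2$ while preserving both the number of boundary components (one) and the number of standard cycles (three), will suffice.

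For the inductive step, suppose $\Gamma$ is a $4$-regular fat graph of genus $g$ with a single boundary component and three standard cycles, realizing a minimal filling triple of $F_g$. I would form the connected sum $\Gamma\#_{(w,u)}G_2$, where $w$ is an arbitrary vertex of $\Gamma$ and $u$ is a carefully chosen vertex of the genus-$2$ fat graph $G_2$ of Example~\ref{eg:pair_2}, which has four boundary components and two standard cycles. Because $\Gamma$ has only one boundary component, the hypothesis of Proposition~\ref{connected_sum} (that all half-edges at $w$ lie on a single boundary component of $\Gamma$) holds automatically. By Proposition~\ref{connected_sum}(2), the connected sum has $3+2-2=3$ standard cycles. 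The aim is to arrange the combinatorics at $u$ so that we land in the case $\sum \chi_{\eta}(\vec{f}_i)\chi_{\eta}(\cev{f}_i)=0$ and $\sum \chi_{\eta}(\cev{f}_i)\chi_{\eta}(\vec{f}_{i+1})=4$; in that case Proposition~\ref{connected_sum}(1) gives $b_1+b_2-4=1+4-4=1$ boundary component, and Proposition~\ref{connected_sum}(3) gives genus $g_1+g_2=g+2$. Thus the connected sum is a $4$-regular fat graph of genus $g+2$ with one boundary component and three standard cycles, whose standard cycles constitute a minimal filling triple of $F_{g+2}$, completing the induction.

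The main obstacle is verifying that $G_2$ actually possesses a vertex $u=\{\vec{f}_1,\vec{f}_2,\vec{f}_3,\vec{f}_4\}$ realizing the required incidence pattern, namely that no pair $\vec{f}_i,\cev{f}_i$ shares a boundary component and that the four corners $\cev{f}_i*\vec{f}_{i+1}$ lie in four \emph{distinct} boundary components of $G_2$. This is precisely the reason $G_2$ was designed with exactly four boundary components, and it reduces to a direct inspection of the four boundary cycles of $G_2$ read off from Figure~\ref{exa3} together with the cyclic orders $\sigma_{v_i}$ at its vertices. I expect this verification, rather than any formula, to be the crux: once a suitable $u$ is identified, the genus, boundary, and standard-cycle counts follow mechanically from Proposition~\ref{connected_sum}. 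Finally, I would note that the three resulting standard cycles are genuinely distinct, essential, pairwise non-homotopic simple closed curves in minimal position, so that they form a bona fide filling triple; this is immediate from the $4$-regularity of the fat graph and the fact that its complement is a single disc.
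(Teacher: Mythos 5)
Your proposal follows essentially the same route as the paper: the same base cases ($G_1$ of Example~\ref{eg:triple_2} for even genus, $\Gamma_0$ of Example~\ref{eg:triple_3} for odd), the same inductive step raising the genus by two via a connected sum with the four-boundary-component graph $G_2$ of Example~\ref{eg:pair_2}, and the same appeal to Proposition~\ref{connected_sum} to track genus, boundary components, and standard cycles. The one point to tighten is that $w$ should be taken to be a vertex of $\Gamma$ with no loops (as the paper stipulates), not an arbitrary vertex, since the standard-cycle count in Proposition~\ref{connected_sum}(2) presumes the four edges at $w$ are distinct.
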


\begin{proof}
First, we consider the case, when $g$ is odd, i.e, $g=2k+1,k\geq 1$. We prove the lemma by induction on $k$. For $k=1$, the lemma holds by considering the fat graph $\Gamma_0$ described in Example~\ref{eg:triple_3}. Assume by the induction hypothesis that the lemma holds for all positive integers $k\leq m$, where $m\geq 1$. Then there exists a $4$-regular fat graph $\Gamma_{m}$ associated with a minimal filling triple on $F_g$, where $g=2m+1$. We take the fat graph $G_2$ described in Example~\ref{eg:pair_2} and we consider $\Gamma_{m+1}=\Gamma_m~\#_{(w_1,v_1)}G_2$, where $w_1$ is a vertex in $\Gamma_m$ at which there are no loops. Then using Proposition~\ref{connected_sum},(1)-(3), it follows that $\Gamma_{m+1}$ is a $4$-regular fat graph associated with a minimal filling triple on $F_{g+2}$ and the lemma follows by induction. A similar argument works for the case, when $g$ is even by taking $G_1$ (Example~\ref{eg:triple_2}) instead of $\Gamma_0$.
\end{proof}

\begin{lemma}\label{lem:plumb}
There exists a minimal filling of $F_{g+1}$ of size $s+2$ if $F_g$ has a minimal filling of size $s$.
\end{lemma}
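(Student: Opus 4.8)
The plan is to realise the required filling through the \emph{plumbing} operation of subsection~\ref{pl}, using a fixed genus-one building block. Under the dictionary between minimal fillings and fat graphs used throughout this section, the hypothesis gives a $4$-regular fat graph $\Gamma_1$ of genus $g$ with a single boundary component and exactly $s$ standard cycles, while producing a minimal filling of $F_{g+1}$ of size $s+2$ is the same as producing a $4$-regular fat graph $\Gamma$ of genus $g+1$ with one boundary component and $s+2$ standard cycles. Thus the lemma reduces to building such a $\Gamma$ out of $\Gamma_1$.

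First I would fix the building block $\Gamma_2$ to be the fat graph of a minimal filling pair of the torus: a single vertex carrying two loops $a,b$, with cyclic order $(\vec{a},\vec{b},\cev{a},\cev{b})$. A direct application of the boundary-component description shows that $\Gamma_2$ is $4$-regular of genus $1$ with one boundary component and exactly two standard cycles, namely the loops $a$ and $b$. I would then form the plumbing $\Gamma=(\Gamma_1\#\Gamma_2)_{(x,y)}$ along any edge $x$ of $\Gamma_1$ and any edge $y$ of $\Gamma_2$. Since $\Gamma_1,\Gamma_2$ are both $4$-regular, the existing vertices keep their valencies under splitting, and the one new vertex $u=\{\cev{x}_1,\cev{y}_1,\vec{x}_2,\vec{y}_2\}$ has degree $4$, so $\Gamma$ is again $4$-regular.

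The boundary count is immediate from Proposition~\ref{plumb:boundary}. Because $\Gamma_1$ has a single boundary component, $\vec{x}$ and $\cev{x}$ automatically lie in the same boundary component, so the four half-edges $\vec{x},\cev{x},\vec{y},\cev{y}$ are \emph{not} in four distinct boundary components; we therefore land in the ``otherwise'' case and
\begin{equation*}
|\partial\Gamma| = b_1+b_2-1 = 1+1-1 = 1,
\end{equation*}
independently of the chosen edges. For the genus I would run a plain Euler characteristic computation: the plumbing adds one vertex $u$ and, after splitting $x$ and $y$, two undirected edges, so $\chi(\Gamma)=\chi(\Gamma_1)+\chi(\Gamma_2)-1$. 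With $\chi(\Gamma_1)=1-2g$ and $\chi(\Gamma_2)=-1$ this gives $\chi(\Gamma)=-2g-1=1-2(g+1)$, which together with $|\partial\Gamma|=1$ identifies the genus of $\Gamma$ as $g+1$.

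The one step deserving genuine care, and the main obstacle, is the count of standard cycles, since it is this number that pins down the size of the filling. At the new vertex $u=(\cev{x}_1,\cev{y}_1,\vec{x}_2,\vec{y}_2)$ the pairs opposite in the cyclic order are $\{\cev{x}_1,\vec{x}_2\}$ and $\{\cev{y}_1,\vec{y}_2\}$; hence a standard cycle entering $u$ along the $x$-strand exits along the $x$-strand, and likewise for the $y$-strand. Consequently the standard cycle of $\Gamma_1$ through $x$ and the standard cycle of $\Gamma_2$ through $y$ each pass straight through $u$ without merging, every other standard cycle of $\Gamma_1$ and of $\Gamma_2$ is untouched, and the two strands never interact, so $\Gamma$ has exactly $s+2$ standard cycles. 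Assembling the three facts, $\Gamma$ is a $4$-regular fat graph of genus $g+1$ with one boundary component and $s+2$ standard cycles, and its standard cycles form the desired minimal filling of $F_{g+1}$ of size $s+2$.
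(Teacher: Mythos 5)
Your proof is correct and follows essentially the same route as the paper: plumbing the given genus-$g$ fat graph with the one-vertex, two-loop genus-one fat graph and invoking Proposition~\ref{plumb:boundary} for the boundary count. In fact you supply a detail the paper elides — the verification that the new vertex $u$ routes each strand straight through so the standard-cycle count is exactly $s+2$, which is not actually covered by Proposition~\ref{plumb:boundary} — so your write-up is, if anything, more complete.
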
  

\begin{proof}
Suppose that $\Omega$ is a minimal filling of $F_g$ of size $s$. Then it corresponds to a $4$-regular fat graph $\Gamma$ of genus $g$ with one boundary component and $s$ standard cycles. To prove the lemma, we use the plumbing of $\Gamma$ with another fat graph $\Gamma_1$ of genus one with one boundary component and two standard cycles, as constructed in  Proposition~\ref{prop:minimal_filling}, and thus obtain a new fat graph ${\tilde \Gamma}=\Gamma\#_{(e,x)}\Gamma'$, where $e \text{ and } x$ are undirected edges of $\Gamma$ and $\Gamma_1$ respectively. By Proposition~\ref{plumb:boundary}, we see that $\tilde{\Gamma}$ is a $4$-regular fat graph of genus $g+1$ with one boundary component and $s+2$ standard cycles, and so the lemma follows. 

%Take $\Gamma'=(E,\sim,\sigma_1,\sigma_0)$  on $F_1$  given as follows:
%\begin{enumerate}
%\item $E=\{\vec{x}, \vec{y},\cev{x}, \cev{y}\}$.
%\item The set of equivalence classes of $\sim$ is $V=\{v\}$ where 
%$ v=\{\vec{x},\cev{y},\cev{x},\vec{y}\}.$
%\item The fixed point free involution $\sigma_1$ is defined by $\sigma_1(\vec{x})=\cev{x},\sigma_1(\vec{y})=\cev{y}$ .
%\item The permutation $\sigma_0$ is given by $(\vec{x},\cev{y},\cev{x},\vec{y})$.
%\end{enumerate}
 
\end{proof}

\begin{lemma}\label{lem:genus_3}
For  each $2\leq s\leq 6$, there exists a minimal filling of $F_3$ of size $s$.
\end{lemma}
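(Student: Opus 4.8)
My plan is to split the interval $2\le s\le 6$ into the five values $s\in\{2,3,4,5,6\}$ and to observe that, for $g=3$, four of these are already covered by results proved earlier in the paper. Indeed, since $\U_{3,1}=2g=6$, Proposition~\ref{prop:minimal_filling} supplies a minimal filling of $F_3$ of the maximal size $s=6$; Lemma~\ref{prop:girth} (applied with $g=3$) supplies one of size $2g-1=5$; Lemma~\ref{thm:triple} supplies a minimal filling triple, i.e.\ size $s=3$; and the computation $L_{3,1}=2$ recorded in the introduction guarantees a minimal filling pair of $F_3$, handling $s=2$. So the entire content of the lemma reduces to the single remaining value $s=4$.

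The natural temptation is to produce the $s=4$ case by the inductive machinery, namely via Lemma~\ref{lem:plumb}: a minimal filling of $F_2$ of size $2$ would plumb up to a minimal filling of $F_3$ of size $4$. However, this route is blocked, because there is \emph{no} minimal filling pair of a genus-two surface. For the same reason the operations of \emph{join} (Proposition~\ref{prop:join}) and \emph{connected sum} (Proposition~\ref{connected_sum}) do not help here: combining two fat graphs each having a single boundary component either increases the boundary count (join always yields $b_1+b_2$ boundary components when each $\vec x,\cev x$ and $\vec y,\cev y$ lie on the unique boundary circle) or lands in the genus-dropping case of the connected sum, so one cannot simultaneously arrange genus $3$, four standard cycles, and a single complementary disc. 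This is exactly why $F_3$ must be treated as a separate base case, and the $s=4$ value is the crux of the lemma.

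I would therefore settle $s=4$ by an \emph{explicit construction}, in the spirit of Examples~\ref{eg:triple_2},~\ref{eg:triple_3} and~\ref{eg:pair_2}. A minimal filling of $F_3$ of size $4$ has $\sum_{i<j} i(\gamma_i,\gamma_j)=2g-2+1=5$ intersection points, so the associated $4$-regular fat graph must have $5$ vertices, $10$ edges, and four standard cycles whose lengths sum to $10$ (for instance of lengths $4,3,2,1$). I would write down a concrete $4$-regular decorated fat graph $\Gamma=(E,\sim,\sigma_1,\sigma_0)$ on these $5$ vertices by prescribing the cyclic order $\sigma_0$ at each vertex, draw the corresponding picture, and then verify two things directly from the combinatorial data: first, that the boundary permutation $\sigma_1*\sigma_0^{-1}$ consists of a \emph{single} cycle, so $\Gamma$ has exactly one boundary component and hence one complementary disc; and second, that $\sigma_0$ decomposes into exactly four standard cycles. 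Once a single boundary component is confirmed, the Euler-characteristic identity $V-E+F=5-10+1=-4=2-2g$ forces $g(\Gamma)=3$, and attaching a disc along the boundary realizes the four standard cycles as the desired minimal filling. The main obstacle, and the only genuine work, is producing the explicit $\sigma_0$ and checking by hand that its boundary word closes up into one cycle; everything else is bookkeeping.
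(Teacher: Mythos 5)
Your reduction of the lemma to the single value $s=4$ is exactly the paper's strategy, and each of the four citations you use for $s=2,3,5,6$ is legitimate: $s=6$ from Proposition~\ref{prop:minimal_filling}, $s=5$ from Lemma~\ref{prop:girth}, $s=3$ from Example~\ref{eg:triple_3} (equivalently the base case of Lemma~\ref{thm:triple}), and $s=2$ from the existence of a filling pair of $F_3$ with one complementary disc. Your side remark about why the join, connected sum, and plumbing operations cannot manufacture the $s=4$ case from smaller pieces (ultimately because $F_2$ admits no minimal filling pair) is also sound and correctly explains why an explicit construction is unavoidable here.

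The gap is that you never actually produce that construction. For $s=4$ you correctly compute the required combinatorics (a $4$-regular fat graph with $5$ vertices, $10$ edges, one boundary component, and four standard cycles of lengths summing to $10$), and you correctly identify the verification that would be needed (the permutation $\sigma_1*\sigma_0^{-1}$ must be a single cycle), but you stop at ``I would write down a concrete $\sigma_0$.'' That existence claim is precisely the content of the lemma beyond what was already known, and it is not automatic: not every prescribed multiset of cycle lengths with the right Euler characteristic is realized by a one-boundary fat graph, so the single-boundary check genuinely has to be carried out on specific data. The paper does exactly this, exhibiting the graph on vertices $w_1,\dots,w_5$ with
$w_1=(\vec{f}_1, \vec{f}_2, \vec{f}_3, \cev{f}_2)$, $w_2=(\cev{f}_3, \vec{f}_4, \vec{f}_5, \vec{f}_6)$, $w_3=(\cev{f}_6, \cev{f}_9, \cev{f}_{10}, \cev{f}_1)$, $w_4=(\cev{f}_5, \vec{f}_{7}, \vec{f}_8, \cev{f}_7)$, $w_5=(\cev{f}_4, \vec{f}_9, \vec{f}_{10}, \cev{f}_8)$, and verifying it has one boundary component and four standard cycles. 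Until you supply and check such data (or an equivalent certificate), the $s=4$ case --- and hence the lemma --- remains unproved.
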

\begin{proof}
The case, when $s=2$ follows from Theorem 1.4~\cite{BS} and the case, when $s=6$, follows from Theorem~\ref{res:1}. It follows from Example~\ref{eg:triple_3} and the fat graph constructed in Lemma~\ref{prop:girth} that the result is true for $s=3,5$. It is enough to prove that there exists a minimal filling quadruple for $F_3$. We prove the result by constructing a 4-regular fat graph $\Gamma=(E,\sim,\sigma_1,\sigma_0)$  of genus $3$, having one boundary component and $4$ standard cycles,  described as follows:

\begin{enumerate}
\item $E=\{\vec{f}_i, \cev{f}_i\;|\;i=1,2,\dots, 10\}$.
\item The set of equivalence classes of $\sim$ is $V=\{w_1, w_2, \dots, w_5\}$, where 
$ w_1=(\vec{f}_1, \vec{f}_2, \vec{f}_3, \cev{f}_2),\,\\w_2=(\cev{f}_3, \vec{f}_4, \vec{f}_5, \vec{f}_6),\,w_3=(\cev{f}_6, \cev{f}_9, \cev{f}_{10}, \cev{f}_1),\,w_4=(\cev{f}_5, \vec{f}_{7}, \vec{f}_8, \cev{f}_7) \text{ and }w_5=(\cev{f}_4, \vec{f}_9, \vec{f}_{10}, \cev{f}_8).$
\item The fixed point free involution $\sigma_1$ is defined by $\sigma_1(\vec{f}_i)=\cev{f}_i$ for $i=1,2,\dots, 10.$
\item The permutation $\sigma_0$ is given by $\sigma_0=\prod\limits_{i=1}^{5}w_i$.
\end{enumerate}
\end{proof}
\noindent We are now ready to prove Proposition~\ref{prop:minimal}.
\begin{proof}[Proof of Proposition~\ref{prop:minimal}]
 We prove the result by induction on $g$. It follows from Theorem 1.2 in~\cite{BS} and Theorem~\ref{res:1} that the result holds true for $g=2$ .  By Lemma~\ref{lem:genus_3}, the result is true for $g=3$. Assume that the result holds for all
$g\leq m$, where $m\geq 3$.  We prove the result for $g=m+1$. By induction hypothesis, for each $2\leq s\leq 2m$, there exists a fat graph $\Gamma_{m,s}$ of genus $m$ with one boundary component and $s$ standard cycles. By Lemma~\ref{lem:plumb}, for each $2\leq s \leq 2m$, there exists a fat graph ${\tilde \Gamma}_{m+1,s+2}$ which corresponds to a minimal filling of $F_{m+1}$ of size $s+2$, i.e., for $4\leq s'\leq 2m+2$, there exists a minimal filling of $F_{m+1}$ of size $s'$. To complete the proof, we have to show that there exists a minimal filling pair and triple for $F_{m+1}$, which follows from Theorem 1.4 in~\cite{BS} and Lemma~\ref{thm:triple} respectively.
The proof now follows by induction. 
\end{proof}
%
%\begin{theorem}
%Let $g\geq 3$ be a positive integer. Then for each $2\leq s\leq 2g$, there exists a minimal filling set $S$ of $F_g$ of size $s$. 
%\end{theorem}
%
%\begin{proof}
%We prove the result by induction on $g$. By Lemma~\ref{lem:genus_3}, the result is true for $g=3$. Assume that the result holds for all
%$g\leq m$.  We prove the result for $g=m+1$. By induction hypothesis, for each $2\leq s\leq 2m$, there exists a fat graph $\Gamma_{m,s}$ such that $g(\Gamma_{m,s})=m,b(\Gamma_{m,s})=1$ and $\#S(\Gamma_{m,s})=s$. By Lemma~\ref{lem:plumb}, for each $2\leq s \leq 2m$, there exists a fat graph ${\tilde \Gamma}_{m+1,s+2}$ such that it corresponds to a minimal filling of $F_{m+1}$ of size $s+2$ i.e, for $4\leq s'\leq 2m+2$, there exists a minimal filling of $F_{m+1}$ of size $s'$. To complete the proof, we have to show that there exists a minimal filling pair and triple for $F_{m+1}$, which follows from Theorem~\ref{thm:triple} and the work of Sanki~\cite{BS}.
%The proof now follows by induction. 
%\end{proof}
 
%%%%%%%%%%%%%%%%%%%%% Section 5: Filling systems of all possible sizes %%%%%%%%%%%%%%%%%%%%%%%
\section{Filling systems of all possible sizes}\label{sec:5}
In this section, we prove Theorem~\ref{res:2}. Before going to the proof, we consider the following proposition which is essential for the proof the the theorem.

\begin{prop}\label{prop:5.1}
For every $g\geq 2$ and $b\geq 1$ with $(g, b)\neq (2,1)$, there exists a 4-regular fat graph $\Gamma(g,b)$ of genus $g$ and $b$ boundary components satisfying folloing:
\begin{enumerate}
  \item The number of standard cycles in $\Gamma(g, b)$ is $2$.
  \item There is an edge $x=\{\vec{x}, \cev{x}\}$ and a boundary component $\partial$ of $\Gamma(g, b)$ such that the directed edges $\vec{x}$ and $\cev{x}$ are both in $\partial$.  
\end{enumerate} 
\end{prop}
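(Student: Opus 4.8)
The plan is to realize the required fat graph as a filling pair and to extract the distinguished edge of (2) directly from a connected-sum presentation. The first observation is a dictionary: a $4$-regular decorated fat graph with exactly two standard cycles, genus $g$, and $b$ boundary components is the same datum as a filling pair of $F_g$ whose complement is a disjoint union of $b$ discs, the two standard cycles being the two curves and every vertex being one of their crossings. Under this dictionary the existence assertion (1), for $(g,b)\neq(2,1)$, is precisely Theorem 1.4 of~\cite{BS}, while the content of (2) is that some edge has both of its sides on a single complementary disc. When $b=1$ statement (2) is vacuous, since the unique boundary component already contains every directed edge (and here $g\geq 3$, so a minimal filling pair is at hand). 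So the real work is the case $b\geq 2$.

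For $b\geq 2$ I would produce $\Gamma(g,b)$ by a single connected sum. Take the first summand to be the genus-one, one-disc pair of Proposition~\ref{prop:minimal_filling} (the two curves on the torus meeting once); because it has a single boundary component, the hypothesis of Proposition~\ref{connected_sum}, namely that all eight directed edges at the chosen vertex lie in one boundary component, is automatic. Take the second summand $\Gamma''$ to be a filling pair of $F_{g-1}$ with $b+3$ complementary discs, which exists for every $g\geq 2$, $b\geq 2$ (for $g=2$ this is an elementary two-curve filling of the torus $F_1$, and for $g\geq 3$ it is provided by Theorem 1.4 of~\cite{BS}). Gluing these in the case of Proposition~\ref{connected_sum} that yields genus $g_1+g_2$ and boundary $b_1+b_2-4$ gives, by parts (1)--(3) of that proposition, genus $1+(g-1)=g$, disc count $1+(b+3)-4=b$, and $2+2-2=2$ standard cycles, which is exactly (1).

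Property (2) then comes for free from the shape of the boundary word. In the proof of Proposition~\ref{connected_sum} the boundary component created by the connected sum is the cyclic word
\[
\eta=\cev{g}_1R'\cev{g}_2P\vec{g}_2Q'\cev{g}_3Q\vec{g}_3P'\cev{g}_4R\vec{g}_4S'\cev{g}_1S,
\]
in which the new edge $g_2$ (and likewise $g_3$ and $g_4$) appears with both orientations $\vec{g}_2$ and $\cev{g}_2$. Hence $x=g_2$ and $\partial=\eta$ witness (2), with no extra argument. Note this is the very connected sum already used in the proof of Lemma~\ref{thm:triple}, so no new machinery is needed.

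The step I expect to be the main obstacle is arranging the second-summand data. The relevant case of Proposition~\ref{connected_sum} requires a vertex $u$ of $\Gamma''$ whose four corners lie in four distinct boundary components (the conditions $\sum_{\eta,i}\chi_\eta(\vec{f}_i)\chi_\eta(\cev{f}_i)=0$ and $\sum_{\eta,i}\chi_\eta(\cev{f}_i)\chi_\eta(\vec{f}_{i+1})=4$), so the crux is to exhibit a filling pair of $F_{g-1}$ with $b+3$ discs possessing such a vertex; I would secure this by choosing $\Gamma''$ itself from a connected-sum construction built on the four-disc pair $G_2$ of Example~\ref{eg:pair_2}, whose vertex $v_1$ already meets this condition. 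Finally, the exclusion $(g,b)=(2,1)$ is precisely the one place where this scheme breaks: for $g=2$ the $b=1$ base would be a genus-two minimal filling pair, which does not exist by~\cite{BS}, whereas for every $b\geq 2$ the second summand lives on the torus $F_{g-1}=F_1$ and the construction goes through.
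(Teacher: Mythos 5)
Your route is genuinely different from the paper's proof, which for $g\geq 3$ simply takes the explicit graphs $G^g_b$ from the proof of Theorem 1.4 in~\cite{BS} and for $g=2$ writes down an explicit fat graph $\Gamma(2,b)$ by hand, reading off its boundary words to exhibit the edge $f_{b+1}$ required in (2). As written, however, your construction has a gap at its very first step. You perform the connected sum at the unique vertex of the genus-one, one-boundary graph of Proposition~\ref{prop:minimal_filling}; but both edges of that graph are loops at that vertex, and the operation of Subsection~\ref{cs} is not defined there: the new edges $\vec{g}_i=\cev{e}_i*\vec{f}_{5-i}$ are supposed to terminate at the far endpoints of the $e_i$, which for a loop is the deleted vertex $w$ itself, and the vertex set $\left\lbrace v' : v\in V_1\cup V_2\setminus\{w,u\}\right\rbrace$ is empty on the $\Gamma_1$ side. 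This is precisely why the paper, when it uses connected sums in Lemma~\ref{thm:triple}, insists on ``a vertex at which there are no loops''; in particular the count $s_1+s_2-2$ of standard cycles, whose proof writes $T_1=e_1T_1'e_3$ with $e_1\neq e_3$, is not available for your summand, whose two standard cycles are $1$-cycles.

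The second problem is the input you yourself flag as the crux: a filling pair of $F_{g-1}$ with $b+3$ complementary discs possessing a vertex whose four corners lie in four distinct discs. Your proposed source (connected sums built on $G_2$ of Example~\ref{eg:pair_2}) has genus at least $2$, so it cannot produce the genus-one summand needed when $g=2$, and a direct check shows the condition can genuinely fail on the torus. A filling pair of $F_1$ with $n$ intersection points is combinatorially a $(1,0)$-curve together with a $(q,n)$-curve, $\gcd(q,n)=1$, and the four corners at a vertex lie in four distinct complementary discs exactly when $q\not\equiv \pm 1 \pmod n$; for $n=b+3=6$ the only units mod $6$ are $\pm 1$, so for $(g,b)=(2,3)$ no admissible second summand exists. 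Thus the $g=2$ case --- exactly the case the paper handles by a bespoke explicit graph --- is where your scheme breaks, and it would need either a loop-free genus-one first summand (which changes the boundary arithmetic) or a different mechanism altogether for small genus.
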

\begin{proof}
For all $g\geq 3$ and $b\geq 1$, we consider $\Gamma(g, b) := G^g_b$, where $G^g_b$ is the $4$-regular fat grpah of genus $g$ with $b$ boundary components and  two standard cycles, explicitly constructed in the proof of Theorem 1.4 in~\cite{BS} (see Section 5 in~\cite{BS}). Furthermore, in the case, when $g=2$, we consider the $4$-regular decorated fat graph $\Gamma(2,b)=\left( E, \sim, \sigma_1, \sigma_0 \right)$ described as follows (see Figure~\ref{fig:sec-5}).
%define $\Gamma(g, b):=G^2_b$, where $G^2_b$ is explicitly given in the proof of Lemma 5.1, Case 2, in~\cite{BS}. It is easy to see that these graphs satisfy Proposition~\ref{prop:5.1}. Therefore, now we consider $g=2$ and $b$ is an even integer.

\begin{enumerate}
\item $E=\left\lbrace \vec{e}_i, \cev{e}_i, \vec{f}_i,\cev{f}_i\;|\;i=1,2,\dots,  b+2  \right\rbrace$.
\item The set of equivalence classes of $\sim$ is $V=\left\lbrace v_j|\; j=1, \dots, b+2 \right\rbrace$, where 
\[
  v_j = \left\{\def\arraystretch{1.2}%
  \begin{array}{@{}c@{\quad}l@{}}
 \left( \vec{e}_1, \vec{f}_1, \cev{e}_{b+2}, \cev{f}_{b+2} \right) & \text{if $j=1$},\\
    \left( \cev{e}_{j-1}, \cev{f}_{j-1}, \vec{e}_j,\vec{f}_j\right) & \text{if $2\leq j \leq b$},\\
    \left(\cev{e}_{b},\vec{f}_{b+2},\vec{e}_{b+1},\cev{f}_{b+1}\right) & \text{if $j=b+1$} \text{ and}\\
    \left( \cev{e}_{b+1},\vec{f}_{b+1},\vec{e}_{b+2},\cev{f}_{b} \right) & \text{if $j=b+2$}.\\
 
  \end{array}\right.
\]
\item The fixed point free involution $\sigma_1$ is defined by $\sigma_1 \left( \vec{e}_i \right) = \cev{e}_i$ for $i=1,2,\dots,   b+2.$
\item The permutation $\sigma_0$ is given by $\sigma_0=\prod\limits_{j=1}^{b+2}v_j$.

\end{enumerate}

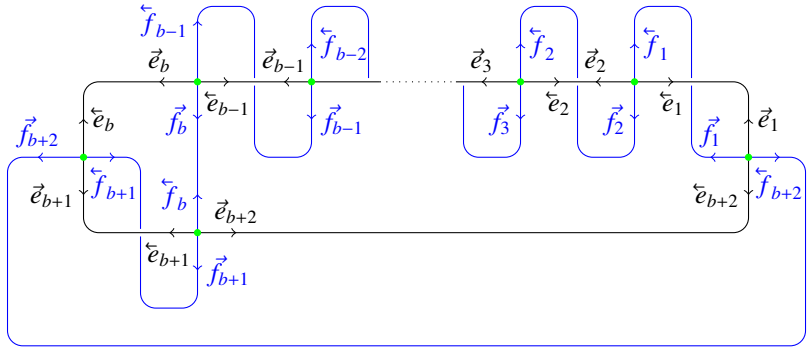
\begin{figure}[htbp]
\begin{center}
\begin{tikzpicture}
\draw [rounded corners = 2mm] (4.9, 3.5) -- (1, 3.5) -- (1,1.5) -- (9.75, 1.5) -- (9.75, 3.5) -- (5.9, 3.5);
\draw [dotted] (4.9, 3.5) -- (5.9, 3.5);
\draw [rounded corners=2mm, blue] (1.75, 1.55) -- (1.75, 2.5) -- (0, 2.5) -- (0, 0) -- (10.5, 0) -- (10.5, 2.5) -- (9, 2.5) -- (9, 3.45);  \draw [rounded corners=2mm, blue] (9, 3.55) -- (9, 4.5) -- (8.25,4.5) -- (8.25, 2.5) -- (7.5, 2.5) -- (7.5, 3.45); \draw [rounded corners=2mm, blue] (7.5, 3.55) -- (7.5, 4.5) -- (6.75, 4.5) -- (6.75, 2.5) -- (6, 2.5) -- (6, 3.45); \draw [rounded corners=2mm, blue] (4.75, 3.55) -- (4.75, 4.5) -- (4, 4.5) -- (4, 2.5) -- (3.25, 2.5) -- (3.25, 3.45); 
\draw [rounded corners=2mm, blue] (3.25, 3.55) -- (3.25, 4.5) -- (2.5, 4.5) -- (2.5, 0.5) -- (1.75, 0.5) -- (1.75, 1.45); 
\draw [green, fill] (1,2.5) circle [radius =0.4mm]; \draw [green, fill] (9.75,2.5) circle [radius =0.4mm]; \draw [green, fill] (2.5, 1.5) circle [radius =0.4mm]; \draw [green, fill] (2.5, 3.5) circle [radius =0.4mm]; \draw [green, fill] (4, 3.5) circle [radius =0.4mm]; \draw [green, fill] (6.75, 3.5) circle [radius =0.4mm]; \draw [green, fill] (8.25, 3.5) circle [radius =0.4mm];

\draw [->] (9.75, 3) -- (9.75, 3.01)node [right] {$\vec{e}_1$}; \draw [->] (9.75, 2.01) -- (9.75, 2)node [left] {$\cev{e}_{b+2}$}; \draw [->] (7.75, 3.5) -- (7.74, 3.5)node [above] {$\vec{e}_2$}; \draw [->] (6.25, 3.5) -- (6.24, 3.5)node [above] {$\vec{e}_3$}; \draw [->] (8.75, 3.5) -- (8.76, 3.5)node [below] {$\cev{e}_1$}; \draw [->] (7.25, 3.5) -- (7.26, 3.5)node [below] {$\cev{e}_2$};
\draw [->] (3.65, 3.5) -- (3.64, 3.5)node [above] {$\vec{e}_{b-1}$}; \draw [->] (2.9, 3.5) -- (2.91, 3.5)node [below] {$\cev{e}_{b-1}$}; \draw [->] (2.01, 3.5) -- (2, 3.5)node [above] {$\vec{e}_{b}$};
\draw [->] (3, 1.5) -- (3.01,1.5)node [above] {$\vec{e}_{b+2}$}; \draw [->] (2.14, 1.5) -- (2.13,1.5)node [below] {$\cev{e}_{b+1}$}; \draw [->] (1, 2.01) -- (1, 2)node [left] {$\vec{e}_{b+1}$}; \draw [->] (1,3) -- (1,3.01)node [right] {$\cev{e}_{b}$};

\draw [->, blue] (9.26,2.5) -- (9.25, 2.5)node [above] {$\vec{f}_1$}; \draw [->, blue] (10.15,2.5) -- (10.16, 2.5)node [below] {$\cev{f}_{b+2}$}; \draw [->, blue](8.25, 3.01) -- (8.25,3)node [left] {$\vec{f}_2$}; \draw [->, blue](8.25, 4) -- (8.25,4.01)node [right] {$\cev{f}_1$}; \draw [->, blue](6.75, 3.01) -- (6.75,3)node [left] {$\vec{f}_3$}; \draw [->, blue](6.75, 4) -- (6.75,4.01)node [right] {$\cev{f}_2$}; \draw [->, blue](4, 3.01) -- (4,3)node [right] {$\vec{f}_{b-1}$}; \draw [->, blue](4, 4) -- (4,4.01)node [right] {$\cev{f}_{b-2}$}; \draw [->, blue](2.5, 3.01) -- (2.5,3)node [left] {$\vec{f}_{b}$}; \draw [->, blue](2.5, 4.3) -- (2.5,4.31)node [left] {$\cev{f}_{b-1}$}; \draw [->, blue](2.5, 2) -- (2.5,2.01)node [left] {$\cev{f}_{b}$}; \draw [->, blue](2.5, 1.01) -- (2.5,1)node [right] {$\vec{f}_{b+1}$}; \draw [->, blue](1.4, 2.5) -- (1.41,2.5)node [below] {$\cev{f}_{b+1}$}; \draw [->, blue](0.41, 2.5) -- (0.4,2.5)node [above] {$\vec{f}_{b+2}$};
\end{tikzpicture}
\end{center}
\caption{Fat graph for filling pair of a genus 2 surface with $b$ complementary components.}\label{fig:sec-5}
\end{figure}

Now, the boundary components of the graph $\Gamma(2,b)$ are given by :
\begin{align*}
  \partial_1=&\vec{e}_1\cev{f}_1\cev{e}_{b+2}\cev{f}_b\cev{e}_{b-1}\vec{f}_{b-1}\vec{e}_b\vec{f}_{b+2},  \\
  \partial_j=& \vec{e}_j\cev{f}_j\cev{e}_{j-1}\vec{f}_{j-1},\text{ for }2\leq j\leq b-1,\text{ and}\\
    \partial_b=& \cev{f}_{b+2}\vec{e}_{b+1}\vec{f}_{b+1}\cev{e}_b\vec{f}_b\cev{e}_{b+1}\cev{f}_{b+1}\vec{e}_{b+2}.
\end{align*}

It is apparant that $\vec{f}_{b+1},\cev{f}_{b+1}\in \partial_b$, and $\Gamma(2,b)$ has two standard cycles. This completes the proof.
\end{proof}

\begin{exa}\label{eg:sec5}
Consider the graph given in Figure~\ref{fig:sec5}. Euler's equation implies that the genus of the graph is $2$. It has two boundary components given by, 
\begin{align*}
  \partial_1=&\vec{x}_1 \vec{y}_2 \cev{z}_1 \vec{x}_3 \vec{y}_1 \cev{x}_1 \cev{y}_3 \vec{z}_2 \cev{x}_2 \cev{y}_1 \text{ and} \\
  \partial_2=& \vec{x}_2 \vec{z}_1\vec{y}_3\cev{x}_3\cev{z}_2\cev{y}_2.
\end{align*}
 It is straightforward to see that the graph  has three standard cycles (see Figure~\ref{fig:sec5}).
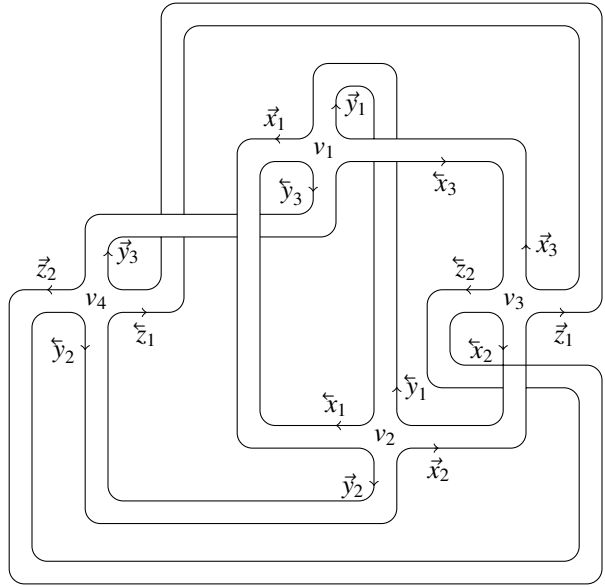
\begin{figure}[htbp]
\begin{center}
\begin{tikzpicture}
\draw [rounded corners=2mm] (6.8,2.6) -- (7.5,2.6) --(7.5,0.3) -- (0.3,0.3) -- (0.3,3.6) -- (1,3.6) -- (1,0.8)-- (5.1,0.8) -- (5.1,1.8) -- (6.8,1.8) -- (6.8,3.6) -- (7.8,3.6) -- (7.8,7.7) -- (2,7.7) -- (2,4.9);

\draw [rounded corners=2mm] (6.8,2.9) -- (7.8,2.9) -- (7.8,0) -- (0, 0) -- (0,3.9) -- (1,3.9) -- (1,4.9) -- (3,4.9); 
\draw [rounded corners=2mm] (3,4.6) -- (1.3,4.6) -- (1.3,3.9) -- (2,3.9) -- (2,4.6); 
\draw [rounded corners=2mm] (2.3,4.6) -- (2.3,3.6) -- (1.3,3.6) -- (1.3,1.1) -- (4.8,1.1) -- (4.8,1.8) -- (3,1.8) -- (3,5.9) -- (4,5.9) -- (4,6.9) -- (5.1,6.9) -- (5.1,5.9); 
\draw [rounded corners=2mm] (4.8,5.9) -- (4.8,6.6) -- (4.3,6.6) -- (4.3,5.9) -- (6.8,5.9) -- (6.8,3.9) -- (7.5,3.9) -- (7.5,7.4) -- (2.3,7.4) -- (2.3,4.9); 
\draw [rounded corners=2mm] (6.5,2.6) -- (5.5,2.6) -- (5.5,3.9) -- (6.5,3.9) -- (6.5,5.6) -- (4.3,5.6) -- (4.3,4.6) -- (3.3,4.6); 
\draw [rounded corners=2mm] (3.3,4.9) -- (4,4.9) -- (4,5.6)-- (3.3,5.6) -- (3.3,2.1) -- (4.8,2.1) -- (4.8,5.6); \draw [rounded corners=2mm] (5.1,5.6) -- (5.1,2.1) -- (6.5,2.1) -- (6.5,3.6) -- (5.8,3.6) -- (5.8,2.9) -- (6.8,2.9);

\draw (4.15, 5.75)node {$v_1$}; \draw (4.95, 1.95)node {$v_2$}; \draw (6.65, 3.75)node {$v_3$}; \draw (1.15, 3.75)node {$v_4$};

\draw [->] (3.55, 5.9) -- (3.5, 5.9)node [above] {$\vec{x}_1$}; \draw [->] (5.6, 1.8) -- (5.65, 1.8)node [below] {$\vec{x}_2$}; \draw [->] (6.8, 4.45) -- (6.8, 4.5)node [right] {$\vec{x}_3$}; \draw [->] (5.7, 5.6) -- (5.75, 5.6)node [below] {$\cev{x}_3$}; \draw [->] (6.5, 3.15) -- (6.5, 3.1)node [left] {$\cev{x}_2$}; \draw [->] (4.31, 2.1) -- (4.3, 2.1)node [above] {$\cev{x}_1$};

\draw [->] (7.3, 3.6) -- (7.31, 3.6)node [below] {$\vec{z}_1$};\draw [->] (0.51, 3.9) -- (0.5, 3.9)node [above] {$\vec{z}_2$}; \draw [->] (6.02, 3.9) -- (6.01, 3.9)node [above] {$\cev{z}_2$};\draw [->] (1.8, 3.6) -- (1.81, 3.6)node [below] {$\cev{z}_1$};

\draw [->] (4.3, 6.35) -- (4.3, 6.36)node [right] {$\vec{y}_1$}; \draw [->] (4, 5.21) -- (4, 5.2)node [left] {$\cev{y}_3$}; \draw [->] (4.8, 1.31) -- (4.8, 1.3)node [left] {$\vec{y}_2$}; \draw [->] (5.1, 2.6) -- (5.1, 2.61)node [right] {$\cev{y}_1$}; \draw [->] (1.3, 4.4) -- (1.3, 4.41)node [right] {$\vec{y}_3$}; \draw [->] (1, 3.11) -- (1, 3.1)node [left] {$\cev{y}_2$};
\end{tikzpicture}
\end{center}
\caption{A fat graph of genus $2$ with three standard cycles and two boundary components.}\label{fig:sec5}
\end{figure}

\end{exa}

We are now ready to prove Theorem~\ref{res:2} for the generic case.
% \begin{proof}
 %In view of  Proposition~\ref{prop:minimal}, we need to prove the result for $b\geq 2$. We will prove the result by induction on $b$. Two cases needs to be considered accordingly as the genus $g$ equals two and the genus is greater than two. We first consider the case when $g>2$. Once again the Proposition~\ref{prop:minimal}  implies that for each $2\leq s\leq 2g$, there exists a minimal filling $\Gamma_1^s$ of size $s$.  

  %Consider $ G_2^{s+1} = \left( \Gamma_1^s, x \right) \# \left( \Gamma_1, y\right)$, where $x=\{\vec{x},\cev{x}\}$ and $y=\{\vec{y},\cev{y}\}$ are edges of $\Gamma_1^s$ and $\Gamma_1$ respectively. It follows from (1)-(3) Proposition~\ref{prop:join} that $G_2^{s+1}$ corresponds to a filling multicurve of size $s+1$ such that the complement is a pair of disjoint discs. Therefore, for  $3\leq t\leq 2g+1$,  we get a filling multicurve of $F_g$ of size $t$ with two boundary components. Then the result holds for $b=2$, by the existense of filling pair with complement a pair of disjoint discs. It is implicit in the proof of Proposition~\ref{prop:join} that there exists an edge $z$ of $G_2^{s+1}$ such that $\vec{z},\cev{z}\in  \eta$ for some $\partial G_2^{s+1}$.
 %\end{proof}

\begin{proof}[Proof of Theorem~\ref{res:2}]

Let us consider $g\geq 2$ and a pair of integers $(b, s)$, where $b\geq 1$ and $L_{g,b}\leq s \leq U_{g, b}$. We construct a 4-regular fat graph of genus $g$ with $b$ boundary components and $s$ standard cycles. There are the following two cases to be considered. 

\noindent {\bf Case 1 ($b\geq s$).} Let $\Gamma_2(g, b')=\Gamma(g, b')$ be the fat graph satisfying Proposition~\ref{prop:5.1}, where $b'=b-s+2\geq 2$. Now, for $i\geq 2$, we inductively define $$\Gamma_{i+1}(g, b'+i-1) := \left( \Gamma_i(g, b'+i-2), x \right)\#\left(\Gamma_1, y\right),$$ where $x={\vec{x}, \cev{x}}$ is an undirected edge of $\Gamma_i(g, b'+i-2)$ such that the directed edges $\vec{x}, \cev{x}$ are both in the same boundary component of $\Gamma_i(g, b'+i-2)$, and  $\Gamma_1$ as in the proof of Proposition~\ref{prop:minimal_filling}. We note that the existence of such an edge $x$ is ensured by Proposition~\ref{prop:5.1} when $i=2$, and the operation join of fat graphs, when $i\geq 3$. Now, by Proposition~\ref{prop:join}, observe that $\Gamma_i(g, b'+i-2)$ is a fat graph of genus $g$ with $(b'+i-2)$ boundary components and $i$ standard sycles. In particular, for $i=s$, we have $\Gamma_s(g, b'+s-2)=\Gamma_s(g, b)$, which has the desired properties.\\
{\bf Case 2 ($b< s$).} Let $G_1(g, s-b+1)$ be a $4$ regular fat graph of genus $g$ with a single boundary component and $s-b+1$ standard cycles, where $(g, b, s)\neq (2,1,2)$ (see Proposition~\ref{prop:minimal}). Furthermore, we take $G_2(2, 3)$ to be the graph given in example~\ref{eg:sec5} (see Figure~\ref{fig:sec5}). Now, for $i\geq 1$, we define $$G_{i+1}\left( g, s-b+i+1\right)= \left( G_i\left( g, s-b+i\right), x \right)\# (\Gamma_1, y).$$ 
Now, by Proposition~\ref{prop:join}, observe that $G_i(g, s-b+i)$ is a fat graph of genus $g$ with $i$ boundary components and $s-b+i$ standard sycles. In particular, for $i=b$, we have $G_b(g, s)$ is a required fat graph.

\end{proof}

%%%%%%%%%%%% Section 5: Proof of Theorem 1.3 %%%%%%%

\section{Geometric intersection number of minimal fillings}
In this section, we proof Theorem~\ref{res:3}. To this end, we define the notion of \emph{weighted intersection graph} and develop some results.

A weighted graph $G$ is a triple $G = \left( V, E, \omega \right)$, where $V$ is the set of vertices, $E$ is the set of (undirected) edges, and $\omega: E \to \mathbb{R}_+$ is a function which assigns each edge $e\in E$ to its weight $\omega \left( e \right)$. For $v\in V$, we define the degree of $v$ as $\Deg \left( v \right) = \sum_{e\in \Star \left( v \right) } \omega \left( e \right)$, where $\Star \left( v \right)$ is the set of all edges incident at $v$. Further, we define 
$$\omega_{\max} \left( G \right)=\max \left\lbrace \omega(e) | \; e\in E \right\rbrace.$$

\begin{dfn}
Given a decorated fat graph $\Gamma$, the weighted intersection graph, denoted by $W\left( \Gamma \right) = \left( V, E, \omega_{\Gamma} \right)$, is given by following:
\begin{enumerate}
\item The set of vertices $V$ is the set of standard cycles of $\Gamma$.
\item There is a simple edge between two vertices, if the corresponding standard cycles intersect and $E$ is the set of all edges.
\item If $e$ is an edge joining two vertices $C_i$ and $C_j$, then $\omega_{\Gamma} \left( e \right) = i \left( C_i, C_j \right).$
\end{enumerate}
\end{dfn}
\subsection{Proof of the inequality of Theorem~\ref{res:3}}
Let $\Gamma$ be a decorated fat graph associated with a minimal filling of $F_g$. To prove the inequality in Theorem~\ref{res:3}, it suffices to prove the proposition below.
\begin{prop}\label{lem:WIG}
Let $G=W\left( \Gamma \right)$ be the weighted intersection graph of a decorated fat graph $\Gamma$ of genus $g\geq 2$. Then we have $$\omega_{\max} \left( G \right) \leq 2g-s +1,$$ where $s$ is the number of standard cycles of $\Gamma$.
\end{prop}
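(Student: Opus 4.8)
The plan is to bound the largest edge weight of $G$ by comparing the \emph{total} weight of $G$ against a lower bound on its number of edges. First I would compute the total weight. Since $\Gamma$ arises from a minimal filling, it is $4$-regular and has a single boundary component, and every vertex is a transverse double point, i.e. a crossing of exactly two \emph{distinct} simple closed curves; hence each vertex contributes $1$ to $i(C_i,C_j)$ for precisely one unordered pair of standard cycles $C_i\neq C_j$. Summing over all vertices of $\Gamma$ gives
\[
\sum_{e\in E}\omega_{\Gamma}(e)=\sum_{i<j} i(C_i,C_j)=|V(\Gamma)|.
\]
Euler's equation $|V(\Gamma)|-|E(\Gamma)|+1=2-2g$ together with the $4$-regularity relation $2|E(\Gamma)|=4|V(\Gamma)|$ then forces $|V(\Gamma)|=2g-1$, so the total weight of $G$ equals $2g-1$.

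Next I would bound $|E|$ from below using connectedness. Because $\Gamma$ is connected, so is $G$: if the vertex set of $G$ split into two nonempty classes joined by no edge, the standard cycles in one class would be disjoint from those in the other, disconnecting $\Gamma$ and contradicting connectivity. A connected graph on $s$ vertices has at least $s-1$ edges, whence $|E|\geq s-1$. Now let $e^{*}\in E$ realise $\omega_{\max}(G)$; since each weight $\omega_{\Gamma}(e)$ is a positive integer, isolating $e^{*}$ and bounding the remaining $|E|-1$ edges below by $1$ gives
\[
2g-1=\sum_{e\in E}\omega_{\Gamma}(e)\;\geq\;\omega_{\max}(G)+\bigl(|E|-1\bigr)\;\geq\;\omega_{\max}(G)+(s-2),
\]
which rearranges to the desired inequality $\omega_{\max}(G)\leq 2g-s+1$.

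I expect the genuine content to lie in the two structural inputs rather than in the final arithmetic. The first is verifying that the total weight is \emph{exactly} $|V(\Gamma)|$, which relies on the filling curves being simple so that no vertex is a self-intersection and each vertex is attributed to a single pair $C_i\neq C_j$; the second is the connectedness of $G$, which I would deduce from that of $\Gamma$. Once these are established the estimate is immediate, and tracing through the equality case shows that $\omega_{\max}(G)=2g-s+1$ forces $G$ to be a tree all of whose edges except $e^{*}$ carry weight $1$ — precisely the combinatorial profile that the subsequent sharpness construction must exhibit.
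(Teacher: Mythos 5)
Your proof is correct and is essentially the paper's argument in different bookkeeping: both rest on the total weight of $G$ being $2g-1$ (equivalently, the weighted degree sum being $4g-2$, which the paper gets from the same Euler-characteristic count) together with connectedness of $G$ forcing at least $s-1$ edges of weight at least $1$. Your edge-level accounting $2g-1\geq \omega_{\max}(G)+(s-2)$ is exactly the paper's degree-sum inequality divided by two, with the minor advantage that it treats $s=2$ uniformly instead of as a separate case and avoids the paper's slightly delicate vertex-degree estimates.
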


\begin{proof}
For $s=2$, the two standard cycles have intersection number $2g-1$, which implies that $\omega_{\max} \left( G \right)=2g-1$. Thus equality holds in this case.

Assume that $s\geq 3$. There exists an edge $e_0$ such that  $\omega_{\max}\left( G \right) = \omega_{\Gamma}\left( e_0 \right).$ Let $C_0$ and $C_1$ be the vertices of the edge $e_0$. Thus the degree of each of these vertices is at least $\omega_{\max} \left( G \right)$. Further, $G$ being  connected implies that one of $C_0$ and $C_1$ has degree at least $\omega_{\max} \left( G \right) + 1$. So, we assume that $deg \left( C_0 \right) \geq \omega_{\max} \left( G \right) +1$ and $deg \left( C_1 \right) \geq \omega_{\max} \left( G \right).$ Furthermore, by the connectedness, we can index the vertices of the intersection graph as $C_i, i< s$ such that $deg \left( C_i \right) \geq 2$ for $3 \leq i \leq s-2$ and $deg \left( C_{s-1} \right)\geq 1.$  By the minimality condition, we have 
\begin{align*}
& 4g-2 = \sum\limits_{i=0}^{s-1} deg(v)\geq \left( \omega_{\max} \left( G \right) + 1 \right) + \omega_{\max} \left( G \right) + 2(s-3) +1 \\
%\Rightarrow & 4g-2 \geq 2 \omega_{\max} \left( G \right) + 2s  - 4 \\ 
\Rightarrow & \omega_{\max}\left( G \right) \leq 2g - s +1.
\end{align*}  
\end{proof}

\subsection{The tightness of the inequality in Theorem~\ref{res:3}}
In the following lemma, we prove the tightness of the inequality for the case, when the size of minimal filling is three. 

\begin{lemma}\label{weighted_triple}
For $g\geq 2$, there exists a $4$-regular fat graph $\Gamma$ of genus $g$ with one boundary component and $3$ standard cycles such that its weighted intersection graph $G=W\left(\Gamma \right)$ satisfies $$\omega_{\max}(G)=2g-2.$$
\end{lemma}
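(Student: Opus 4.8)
The plan is to reuse verbatim the family of fat graphs constructed in the proof of Lemma~\ref{thm:triple}, and simply to track, at each stage of that induction, the geometric intersection numbers of the three standard cycles. Recall that a minimal filling triple of $F_g$ has exactly $2g-1$ intersection points in total (the number of vertices of the associated $4$-regular fat graph), so Euler's equation pins down the total weight $\sum_{i<j} i(C_i, C_j) = 2g-1$ of the weighted intersection graph; the whole game is to show that a single pair of curves carries $2g-2$ of these.

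For the base cases I would compute $W(G_1)$ and $W(\Gamma_0)$ directly. In $G_1$ (Example~\ref{eg:triple_2}) the length-$1$ standard cycle $\{f_6\}$ meets the other curves exactly once, so the two remaining cycles account for the other two intersection points, giving $\omega_{\max}(W(G_1)) = 2 = 2\cdot 2 - 2$. Likewise, in $\Gamma_0$ (Example~\ref{eg:triple_3}) the shortest standard cycle contributes a single intersection and the two longer cycles meet each other $4$ times, so $\omega_{\max}(W(\Gamma_0)) = 4 = 2\cdot 3 - 2$. These two base cases, of genera $2$ and $3$, together seed both parities for an induction whose step raises the genus by $2$.

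For the inductive step, suppose $\Gamma$ is a genus-$g$ member of the family, with one boundary component and three standard cycles $C_0, C_1, C_2$ satisfying $i(C_0, C_1) = 2g-2$, and form $\widehat{\Gamma} = \Gamma \#_{(w, v_1)} G_2$ exactly as in Lemma~\ref{thm:triple}, but choosing the connected-sum vertex $w$ to be a loop-free crossing of the \emph{heavy} pair $C_0, C_1$. Such a vertex exists, since $C_0$ and $C_1$ cross $2g-2 \geq 2$ times away from the short third cycle $C_2$. By Proposition~\ref{connected_sum}, $\widehat{\Gamma}$ has genus $g+2$, one boundary component, and three standard cycles, two of which arise by merging $C_0$ and $C_1$ with the two standard cycles of $G_2$. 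Now $G_2$ (Example~\ref{eg:pair_2}) carries a single pair of standard cycles meeting at all six of its vertices, and the connected sum deletes precisely the two chosen crossing vertices $w$ and $v_1$ while creating no new vertices; hence the two merged curves of $\widehat{\Gamma}$ meet exactly $(2g-2-1) + (6-1) = 2(g+2)-2$ times. Combining this with the upper bound $\omega_{\max} \leq 2g-2$ of Proposition~\ref{lem:WIG} (for $s=3$) at each genus yields equality, completing the induction.

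The step I expect to be the main obstacle is the intersection-number bookkeeping in the inductive step: verifying that passing to the connected sum changes the number of crossings of the heavy pair by exactly $+4$ — that is, that no crossings are created and none are lost beyond the two deleted vertices $w, v_1$ — and confirming that a loop-free crossing of the heavy pair is always available for the gluing (which follows because the heavy intersection number $2g-2$ stays at least $2$ throughout and is realized at vertices disjoint from the short cycle $C_2$).
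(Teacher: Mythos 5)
Your inductive step is arithmetically sound (the connected sum with $G_2$ at a loop-free crossing of the heavy pair does change its intersection number by exactly $-1-1+6=+4$, since $V_1,V_2$ inherit precisely the crossings of $T_1,T_2$ away from $w$ and of $U_1,U_2$ away from $v_1$, with no cross-terms), and your genus-$2$ seed $G_1$ is correct. But the genus-$3$ seed is not, and this breaks the entire odd-genus half of your induction. For a $4$-regular fat graph whose three standard cycles are simple, each vertex is a crossing of exactly two distinct cycles, so the cycle lengths $\ell_1,\ell_2,\ell_3$ determine the pairwise intersection numbers via $i(C_i,C_j)=(\ell_i+\ell_j-\ell_k)/2$. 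Example~\ref{eg:triple_3} has lengths $(5,3,2)$, which forces intersections $(3,2,0)$; hence $\omega_{\max}\left(W(\Gamma_0)\right)=3$, not $4=2\cdot 3-2$ as you assert ("the two longer cycles meet each other $4$ times" is incompatible with the stated lengths — a length-$2$ cycle necessarily contributes $2$ crossings, not $1$). To seed genus $3$ you would need a minimal filling triple with length profile $(5,4,1)$, which $\Gamma_0$ is not. A secondary, fixable issue is that you should verify a loop-free crossing of the heavy pair survives each connected sum so the induction can continue.

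The paper takes a different and non-inductive route for $g\geq 3$: it starts from the fat graph of a filling \emph{pair} of $F_{g-1}$ with two complementary discs, whose two curves already meet exactly $2g-2$ times by Euler's formula, and then plumbs on the sphere fat graph carrying a single standard $1$-cycle along an edge meeting both boundary components. By Proposition~\ref{plumb:boundary} this merges the two discs into one and raises the genus to $g$, adds a third curve meeting the others exactly once, and leaves the heavy intersection number $2g-2$ untouched. This produces the required $(5,4,1)$-type configuration in one step at every genus and avoids the parity bookkeeping entirely; if you want to salvage your approach, you should replace your genus-$3$ base case by such a graph rather than by $\Gamma_0$.
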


\begin{proof}
The case, when $g=2$ follows from Theorem 1.2~\cite{BS}. Now, we consider $g\geq 3$. It follows from Theorem 1.4~\cite{BS} that there exists a filling pair $(\alpha,\beta)$ of $F_{g-1}$ such that the complement $S_{g-1}\setminus \{\alpha,\beta\}$ 
is a disjoint union of two topological discs. Let $\Gamma_0$ be the associated fat graph with this filling. Then $\Gamma_0$ is a $4$-regular fat graph having $2g-2$ vertices 
and two boundary components.

Let $\partial\Gamma_0=\{\partial_1,\partial_2\}$. We choose an edge $x=\{ \vec{x},  \cev{x}\}$ in $\Gamma_0$, such that $\vec{x}\in \partial_1$ and $\cev{x}\in \partial_2$ (the existence of such an edge follows from Theorem 1.4~\cite{BS}). Let $\Gamma_0'$ be the fat graph on sphere with two boundary components and one standard 1-cycle. Take $\Gamma=\Gamma_0\#_{(x,y)}\Gamma_0'$, 
where $y=\{\vec{y},\cev{y}\}$ is the only edge of $\Gamma_0'$. It follows from Proposition~\ref{plumb:boundary} that $\lvert \partial\Gamma \rvert=1$ and thus a simple Euler characteristic argument implies that $\Gamma$ has genus $g$. It is easy to see that 
$\Gamma$ has the desired properties and the result follows.
\end{proof}
Now, the lemma below completes the proof of tightness of Theorem~\ref{res:3}.
\begin{lemma}\label{lem:weighted_even}
For every $g\geq 2$ and $s$ satisfying $L_{g,1} \leq s \leq U_{g,1}$, there exists a 4-regular fat graph $\Gamma$ of genus $g$ with $s$ standard cycles,  and one boundary component whose weighted intersection graph $G=W\left( \Gamma \right)$ satisfies $$\omega_{\max}(G)=2g-s+1.$$ 
\end{lemma}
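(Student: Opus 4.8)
The plan is to realize the extremal value $w := 2g - s + 1$, which is precisely the upper bound of Proposition~\ref{lem:WIG}, for every admissible pair $(g,s)$. The engine is the plumbing operation of Lemma~\ref{lem:plumb}, and I would first record the elementary observation that plumbing a fat graph $\Gamma$ (genus $g$, one boundary, $s$ standard cycles) with the genus-one graph $\Gamma_1$ of Proposition~\ref{prop:minimal_filling} preserves the maximal weight, i.e.\ $\omega_{\max}(W(\tilde\Gamma)) = \omega_{\max}(W(\Gamma))$. Indeed, by construction the plumbing disturbs the two graphs only near the chosen edges $e$ and $x$: it creates a single new four-valent vertex $u$, whose two transverse strands are the continuation of the standard cycle of $\Gamma$ through $e$ and of a standard cycle of $\Gamma_1$ through $x$. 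Thus it introduces exactly one new intersection point (of weight one), leaves every pre-existing intersection point—hence every pre-existing weight—untouched, and carries along only the internal weight-one intersection of $\Gamma_1$. Therefore $\omega_{\max}(W(\tilde\Gamma)) = \max\{\omega_{\max}(W(\Gamma)),1\} = \omega_{\max}(W(\Gamma))$. Since Lemma~\ref{lem:plumb} guarantees $\tilde\Gamma$ has genus $g+1$, one boundary and $s+2$ standard cycles, and since $2(g+1)-(s+2)+1 = 2g-s+1$, the target value $w$ is constant along a tower of plumbings.

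Consequently it suffices to exhibit one base graph for each value of $w$ and plumb upward, so I would organize the admissible pairs into towers on which $w$ is constant: $(g,s)$ and $(g+1,s+2)$ lie in the same tower, whose base is its member of least genus. The base cases split by parity. When $w$ is even (so $s$ is odd), the base is the genus-$(w/2+1)$ minimal filling triple of Lemma~\ref{weighted_triple}, whose weighted intersection graph already satisfies $\omega_{\max} = 2(w/2+1)-2 = w$. When $w$ is odd with $w\geq 5$ (so $s$ is even and $s\leq 2g-4$), the base is a minimal filling pair of $F_{(w+1)/2}$, which exists because $(w+1)/2 \geq 3$; its two standard cycles meet $2\cdot\frac{w+1}{2}-1 = w$ times, so $\omega_{\max}=w$. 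The case $w=1$ (that is, $s=2g$) is immediate, since the minimally intersecting filling of Proposition~\ref{prop:minimal_filling} has all intersection numbers equal to one.

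The one tower that cannot be anchored this way is $w=3$, i.e.\ $s=2g-2$, whose putative genus-two base would be a filling pair of $F_2$, which does not exist. Here I would anchor the tower at genus three by taking the explicit genus-three, one-boundary, four-standard-cycle fat graph constructed in Lemma~\ref{lem:genus_3}: tracing its standard cycles shows they have lengths $5,1,3,1$, that the length-five cycle meets one other cycle three times (at the vertices $w_2,w_3,w_5$) and the two remaining cycles once each, so its weighted intersection graph has weights $3,1,1$ and hence $\omega_{\max}=3$. Plumbing this base upward yields the whole tower $(3,4),(4,6),(5,8),\dots$, i.e.\ every pair with $s=2g-2$, with $\omega_{\max}=3$ throughout.

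Finally I would verify that these towers exhaust the admissible region: for $(g,s)$ with $L_{g,1}\leq s\leq U_{g,1}$ the value $w=2g-s+1$ lies in $\{1,\dots,2g-1\}$, its base genus ($w/2+1$, $(w+1)/2$, or $3$) is at most $g$, and the base is admissible in every instance—the excluded pair $(g,s)=(2,2)$ never occurs because $g=2$ forces $s\geq 3$. The main obstacle is the verification underpinning the plumbing step, namely confirming from the combinatorial definitions of plumbing and of standard cycles that exactly one new weight-one intersection is produced and no existing intersection number is altered, together with pinning down the single genuinely special base case $w=3$; once these are secured, the tower argument is routine.
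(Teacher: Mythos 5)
Your proposal is correct, and while it runs on the same engine as the paper --- the observation that plumbing a one-boundary fat graph with the genus-one, two-cycle graph $\Gamma_1$ of Proposition~\ref{prop:minimal_filling} sends $(g,s)\mapsto(g+1,s+2)$, adds only weight-one edges to the weighted intersection graph, and therefore preserves both $\omega_{\max}$ and the target value $2g-s+1$ --- it organizes the induction differently and, in one place, more carefully. The paper splits by the parity of $s$: for odd $s$ it performs essentially your tower induction (anchored at $s=3$ via Lemma~\ref{weighted_triple} and at $s=2g-1$ via Lemma~\ref{prop:girth}), but for even $s$ it instead makes a single plumbing of a minimal filling pair of $F_{g-m+1}$ with a genus-$(m-1)$ ``chain'' graph whose intersection graph is a path, which forces the restriction $2\le m\le g-2$ and leaves $s=2g$ and $s=2g-2$ to be treated separately. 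Your uniform tower formulation absorbs the even case into the same mechanism and, crucially, handles the one genuinely delicate tower $w=2g-s+1=3$ (i.e.\ $s=2g-2$) by anchoring it at the explicit genus-three quadruple of Lemma~\ref{lem:genus_3}, whose cycle lengths $5,1,3,1$ and weights $3,1,1$ you compute correctly. This is a real improvement: the paper dismisses $s=2g-2$ by citing Lemma~\ref{weighted_triple}, but that lemma produces \emph{three} standard cycles with $\omega_{\max}=2g-2$ rather than $2g-2$ standard cycles with $\omega_{\max}=3$, and the paper's even-$s$ construction cannot reach $m=g-1$ because $F_2$ admits no minimal filling pair; your anchor closes exactly this gap. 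The only item still owed in your write-up is the short verification you already flag: at the new four-valent vertex $u$ of the plumbing the cyclic order $(\cev{x}_1,\cev{y}_1,\vec{x}_2,\vec{y}_2)$ pairs $\cev{x}_1$ with $\vec{x}_2$ and $\cev{y}_1$ with $\vec{y}_2$, so the two ambient standard cycles each pass straight through $u$ and cross there exactly once, which is all the tower step needs.
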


\begin{proof}
We consider two cases, when $s$ is even and odd.

\noindent {\bf Case 1.} In this case, we consider $s$ is even. It follows from Theorem 1.2~\cite{BS} that the result is true for $g=2$. The construction of $\Omega_g^{\max}$ in the proof of Theorem~\ref{res:1} implies that the result holds for $g\geq 3$ when $s=U_{g,1}(=2g)$. %Further, the result follows from Theorem 1.4~\cite{BS} in the case when $s=2$.
 
Now, Lemma~\ref{weighted_triple} implies that the result is true for $g\geq 3$ when $s=2g-2$. We assume that $g\geq 3$ and prove the result by constructing a fat graph with the desired properties. The proof of Proposition~\ref{lem:WIG} implies the equality  in the case, when $s=2$. Therefore, we assume that $s=2m$, where $2\leq m\leq g-2$ and this implies $g':=g-m+1\geq 3$. So, there exists a fat graph $\Gamma_1$ of genus $g'$ with $2$ standard cycles and a single boundary component. Now, consider the fat graph $\Gamma_2$ of genus $m-1 \left( \geq 1 \right)$ with $2m-2$ standard cycles and a single boundary component whose intersection graph is a simple path of length $2m-3$. We define $\Gamma=\Gamma_1 \# \Gamma_2$, where the plumbing can be done along any two edges of $\Gamma_1$ and $\Gamma_2$. It follows from Proposition~\ref{plumb:boundary} that the number of boundary component in $\Gamma$ is one. Further, it is easy to see that the genus of $\Gamma$ is $g$ and it has $s$ standard cycles. Moreover, $\omega_{\max}(G)=2g-s+1$, which is realized by the edge in the weighted intersection graph joining the vertices corresponding the standard cycles in $\Gamma_1$. 

\noindent {\bf Case 2.} In this case, we consider $s$  is odd. 
% each positive integer $2\leq s\leq 6$, there exists a a 4-regular fat graph $\Gamma$ of genus $g$ with $s$ standard cycles,  and one boundary component whose intersection graph $G=W\left( \cap \left( \Gamma \right) \right)$ satisfies $\omega_{\max}(G)=2g-s+1$.
We prove result by induction on $g$. For $g=3$, the result is a direct consequence of Lemma~\ref{weighted_triple} and Lemma~\ref{prop:girth}. Suppose that the result is true for $g=m$, where $m\geq 3$. Now, consider $g=m+1$ and odd integer $s$ such that $3\leq s\leq 2m+1$. By Lemma~\ref{weighted_triple} and Lemma~\ref{prop:girth}, it is enough to consider $5\leq s\leq 2m-1$. Take $s=2k-1$ where $3\leq k \leq m$. By induction hypothesis, there exists a 4-regular fat graph $\Gamma'$ of genus $m$ with $2k-3$ standard cycles,  and one boundary component whose intersection graph $G'=W\left( \Gamma' \right)$ satisfies $\omega_{\max}(G')=2m-(2k-3)+1$. To prove the result, we use the
plumbing of $\Gamma'$ with another fat graph $\Gamma_0$ of genus one with one boundary component and two standard cycles and hence obtain a fat graph $\Gamma$ of genus $g=m+1$ with $2k-1$ 
standard cycles and one boundary component whose weighted intersection graph $G=W\left( \Gamma \right)$ satisfies 
$$\omega_{\max}(G)=\omega_{\max}(G')=2m-(2k-3)+1= 2(m+1)-(2k-1)+1.$$ Thus, the result follows by induction.
\end{proof}

\bibliographystyle{alpha}
\bibliography{fillings}

\begin{thebibliography}{APP11}

\bibitem[AH15]{TA}
Tarik Aougab and Shinnyih Huang.
\newblock Minimally intersecting filling pairs on surfaces.
\newblock {\em Algebr. Geom. Topol.}, 15(2):903--932, 2015.

\bibitem[APP11]{JA}
James~W. Anderson, Hugo Parlier, and Alexandra Pettet.
\newblock Small filling sets of curves on a surface.
\newblock {\em Topology Appl.}, 158(1):84--92, 2011.

\bibitem[FM12]{FM}
Benson Farb and Dan Margalit.
\newblock {\em A primer on mapping class groups}, volume~49 of {\em Princeton
  Mathematical Series}.
\newblock Princeton University Press, Princeton, NJ, 2012.

\bibitem[FP16]{Parlier}
Federica Fanoni and Hugo Parlier.
\newblock Filling sets of curves on punctured surfaces.
\newblock {\em New York J. Math.}, 22:653--666, 2016.

\bibitem[Pen88]{RCP}
Robert~C. Penner.
\newblock A construction of pseudo-{A}nosov homeomorphisms.
\newblock {\em Trans. Amer. Math. Soc.}, 310(1):179--197, 1988.

\bibitem[San17]{BS}
Bidyut Sanki.
\newblock Filling of closed surfaces.
\newblock {\em J. Topol. Anal.}, doi:10.1142/S1793525318500309, 2017.

\bibitem[SS99]{Schmutz}
Paul Schmutz~Schaller.
\newblock Systoles and topological {M}orse functions for {R}iemann surfaces.
\newblock {\em J. Differential Geom.}, 52(3):407--452, 1999.

\bibitem[Thu86]{WT}
William Thurston.
\newblock A spine for {T}eichm{\"u}ller space.
\newblock {\em preprint}, 1986.

\bibitem[Thu88]{TW2}
William~P. Thurston.
\newblock On the geometry and dynamics of diffeomorphisms of surfaces.
\newblock {\em Bull. Amer. Math. Soc. (N.S.)}, 19(2):417--431, 1988.

\end{thebibliography}
\end{document}